\documentclass[english,american]{article}
\usepackage[T1]{fontenc}
\usepackage[latin9]{inputenc}
\usepackage[a4paper]{geometry}
\geometry{verbose,tmargin=2.5cm,bmargin=2.5cm,lmargin=2.5cm,rmargin=2.5cm}
\pagestyle{plain}
\setlength{\parskip}{\smallskipamount}
\setlength{\parindent}{0pt}
\synctex=1
\usepackage{xcolor}
\usepackage{babel}
\usepackage{array}
\usepackage{verbatim}
\usepackage{enumitem}
\usepackage{multirow}
\usepackage{amsmath}
\usepackage{amsthm}
\usepackage{amssymb}
\PassOptionsToPackage{normalem}{ulem}
\usepackage{ulem}
\usepackage[unicode=true,pdfusetitle,
 bookmarks=true,bookmarksnumbered=true,bookmarksopen=false,
 breaklinks=false,pdfborder={0 0 1},backref=false,colorlinks=false]
 {hyperref}
\usepackage{breakurl}

\makeatletter

\providecommand{\tabularnewline}{\\}
\providecolor{lyxadded}{rgb}{0,0,1}
\providecolor{lyxdeleted}{rgb}{1,0,0}

\DeclareRobustCommand{\lyxsout}[1]{\ifx\\#1\else\sout{#1}\fi}

\theoremstyle{plain}
\newtheorem{thm}{\protect\theoremname}[section]
  \theoremstyle{definition}
  \newtheorem{defn}[thm]{\protect\definitionname}
 \theoremstyle{plain}
 \newtheorem{observation}[thm]{\protect\observationname}
 \newcommand\thmsname{\protect\theoremname}
 \newcommand\nm@thmtype{theorem}
 \theoremstyle{plain}
 
 \newenvironment{namedthm}[1][Undefined Theorem Name]{
   \ifx{#1}{Undefined Theorem Name}\renewcommand\nm@thmtype{theorem*}
   \else\renewcommand\thmsname{#1}\renewcommand\nm@thmtype{namedtheorem}
   \fi
   \begin{\nm@thmtype}}
   {\end{\nm@thmtype}}
  \theoremstyle{plain}
  \newtheorem{lem}[thm]{\protect\lemmaname}
  \theoremstyle{definition}
  \newtheorem{example}[thm]{\protect\examplename}
  \theoremstyle{plain}
  \newtheorem{prop}[thm]{\protect\propositionname}
 \theoremstyle{definition}
 
 \newenvironment{nameddef}[1][Undefined Theorem Name]{
   \ifx{#1}{Undefined Theorem Name}\renewcommand\nm@thmtype{definition*}
   \else\renewcommand\thmsname{#1}\renewcommand\nm@thmtype{nameddefinition}
   \fi
   \begin{\nm@thmtype}}
   {\end{\nm@thmtype}}
  \theoremstyle{plain}
  \newtheorem{cor}[thm]{\protect\corollaryname}
  \theoremstyle{remark}
  \newtheorem*{rem*}{\protect\remarkname}

\@ifundefined{date}{}{\date{}}
\usepackage{calc}

\usepackage{caption}
\captionsetup[subfigure]{labelformat=empty}
\captionsetup[table]{labelformat=empty}
\usepackage{tikz}
\pgfrealjobname{diamond_free_families}
\usetikzlibrary{datavisualization}
\usetikzlibrary{datavisualization.formats.functions}

\newif\ifcompiletikz
\newenvironment{tikzwhenenabled}{\ifcompiletikz%
\def\tikzwhenenabledend{}%
\else%
\emph{TikZ figures have been disabled.}
\let\tikzwhenenabledend\endcomment%
\comment%
\fi}{\tikzwhenenabledend}

\providecommand\phantomsection{}

\usepackage{chngcntr}

\newenvironment{boldproof}[1][\proofname] {\par\pushQED{\qed}\normalfont\topsep6\p@\@plus6\p@\relax\trivlist\item[\hskip\labelsep\bfseries#1\@addpunct{.}]\ignorespaces}{\popQED\endtrivlist\@endpefalse}

\@ifundefined{showcaptionsetup}{}{%
 \PassOptionsToPackage{caption=false}{subfig}}
\usepackage{subfig}
\makeatother

  \addto\captionsamerican{\renewcommand{\corollaryname}{Corollary}}
  \addto\captionsamerican{\renewcommand{\definitionname}{Definition}}
  \addto\captionsamerican{\renewcommand{\examplename}{Example}}
  \addto\captionsamerican{\renewcommand{\lemmaname}{Lemma}}
  \addto\captionsamerican{\renewcommand{\propositionname}{Proposition}}
  \addto\captionsamerican{\renewcommand{\remarkname}{Remark}}
  \addto\captionsamerican{\renewcommand{\theoremname}{Theorem}}
  \addto\captionsenglish{\renewcommand{\corollaryname}{Corollary}}
  \addto\captionsenglish{\renewcommand{\definitionname}{Definition}}
  \addto\captionsenglish{\renewcommand{\examplename}{Example}}
  \addto\captionsenglish{\renewcommand{\lemmaname}{Lemma}}
  \addto\captionsenglish{\renewcommand{\propositionname}{Proposition}}
  \addto\captionsenglish{\renewcommand{\remarkname}{Remark}}
  \addto\captionsenglish{\renewcommand{\theoremname}{Theorem}}
  \providecommand{\corollaryname}{Corollary}
  \providecommand{\definitionname}{Definition}
  \providecommand{\examplename}{Example}
  \providecommand{\lemmaname}{Lemma}
  \providecommand{\propositionname}{Proposition}
  \providecommand{\remarkname}{Remark}
  \providecommand{\theoremname}{Theorem}
 \addto\captionsamerican{\renewcommand{\observationname}{Observation}}
 \addto\captionsenglish{\renewcommand{\observationname}{Observation}}
 \providecommand{\observationname}{Observation}
\providecommand{\theoremname}{Theorem}

\begin{document}

\compiletikztrue 

\global\long\def\F{\mathcal{F}}
\global\long\def\X{\mathcal{X}}
\global\long\def\B{\mathcal{\mathcal{\mathcal{B}}}}
\global\long\def\C{\mathcal{C}}
\global\long\def\Y{\mathcal{Y}}
\global\long\def\Z{\mathcal{Z}}
\global\long\def\A{\mathcal{A}}
\global\long\def\D{\mathcal{D}}
\global\long\def\Q{\mathcal{Q}}
\global\long\def\h{\mathcal{H}}
\global\long\def\tif{\textnormal{if }}
\global\long\def\tand{\textnormal{ and }}
\global\long\def\tor{\textnormal{ or }}
\global\long\def\La{\operatorname{La}}
\global\long\def\naturals{\mathbf{N}}
\global\long\def\reals{\mathbf{R}}
\global\long\def\mathnote#1{}

\author{D\'aniel Gr\'osz\thanks{Department of Mathematics, University of Pisa. e-mail: \protect\href{mailto:groszdanielpub@gmail.com}{groszdanielpub@gmail.com}}\and Abhishek
Methuku\thanks{Department of Mathematics, Central European University, Budapest,
Hungary. e-mail: \protect\href{mailto:abhishekmethuku@gmail.com}{abhishekmethuku@gmail.com}}\and Casey Tompkins\thanks{Alfr\'ed R\'enyi Institute of Mathematics, Hungarian Academy of
Sciences. e-mail: \protect\href{mailto:ctompkins496@gmail.com}{ctompkins496@gmail.com}}}

\title{An upper bound on the size of diamond-free families of sets}
\maketitle
\begin{abstract}
Let $\La(n,P)$ be the maximum size of a family of subsets of $[n]=\{1,2,\ldots,n\}$
not containing $P$ as a (weak) subposet. The diamond poset, denoted
$\Q_{2}$, is defined on four elements $x,y,z,w$ with the relations
$x<y,z$ and $y,z<w$. $\La(n,P)$ has been studied for many posets;
one of the major open problems is determining $\La(n,\Q_{2})$. It
is conjectured that $\La(n,\Q_{2})=(2+o(1))\binom{n}{\left\lfloor n/2\right\rfloor }$,
and infinitely many significantly different, asymptotically tight
constructions are known.

Studying the average number of sets from a family of subsets of $[n]$
on a maximal chain in the Boolean lattice $2^{[n]}$ has been a fruitful
method. We use a partitioning of the maximal chains and introduce
an induction method to show that $\La(n,\Q_{2})\leq(2.20711+o(1))\binom{n}{\left\lfloor n/2\right\rfloor }$,
improving on the earlier bound of $(2.25+o(1))\binom{n}{\left\lfloor n/2\right\rfloor }$
by Kramer, Martin and Young.
\end{abstract}

\section{Introduction}

Let $[n]=\{1,2,\dots,n\}$. The Boolean lattice $2^{[n]}$ is defined
as the family of all subsets of $[n]=\{1,2,\ldots,n\}$, and the\emph{
$i$th level} of $2^{[n]}$ refers to the collection of all sets of
size $i$. In 1928, Sperner proved the following well-known theorem.
\begin{thm}[Sperner \cite{sperner1928satz}]
\emph{}If $\F$ is a family of subsets of $[n]$ such that no set
contains another ($A,B\in\F$ implies $A\not\subset B$), then $\left|\F\right|\le\binom{n}{\left\lfloor n/2\right\rfloor }$.
Moreover, equality occurs if and only if $\F$ is a level of maximum
size in $2^{[n]}$. 
\end{thm}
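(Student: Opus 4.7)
The plan is to use Lubell's maximal-chain double-counting argument, which directly exploits the antichain hypothesis and foreshadows the averaging-over-chains technique the paper uses later for the diamond problem.

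First I would consider maximal chains $\emptyset=C_{0}\subset C_{1}\subset\cdots\subset C_{n}=[n]$ in $2^{[n]}$; there are exactly $n!$ of them, one for each permutation of $[n]$. Since $\F$ is an antichain, every maximal chain contains at most one element of $\F$. On the other hand, a fixed $F\subseteq[n]$ with $|F|=k$ is contained in exactly $k!\,(n-k)!$ maximal chains (choose an ordering of the elements inside $F$ and of those outside). Double-counting the pairs $(F,C)$ with $F\in\F$ and $F\in C$ therefore gives
\[\sum_{F\in\F}|F|!\,(n-|F|)!\le n!,\]
or equivalently $\sum_{F\in\F}\binom{n}{|F|}^{-1}\le 1$. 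Since $\binom{n}{k}$ is maximized at $k=\lfloor n/2\rfloor$, replacing each term by $\binom{n}{\lfloor n/2\rfloor}^{-1}$ and summing yields $|\F|\le\binom{n}{\lfloor n/2\rfloor}$, as required.

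For the characterization of equality, tightness of the above inequalities forces every maximal chain to meet $\F$ in exactly one set and every $F\in\F$ to satisfy $\binom{n}{|F|}=\binom{n}{\lfloor n/2\rfloor}$. When $n$ is even this immediately pins $\F$ down to the unique middle level. The delicate case, and the main obstacle, is odd $n$, where both sizes $(n-1)/2$ and $(n+1)/2$ are admissible. To rule out mixtures, I would argue that for every set $A$ of size $(n-1)/2$ and every $x\notin A$ the chain through $A$ and $A\cup\{x\}$ must be covered exactly once by $\F$, so either $A\in\F$ or $A\cup\{x\}\in\F$ but not both; combined with the antichain condition this forces a rigid bipartite structure on the inclusion graph between the two middle levels, and a short propagation argument on this regular, connected graph then shows that $\F$ must lie entirely on a single level.
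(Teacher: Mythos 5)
The paper does not prove Sperner's theorem; it is stated as Theorem 1.1 with only a citation to Sperner's original 1928 paper, so there is no in-paper proof to compare against. That said, your argument is the classical Lubell (LYM) chain-counting proof, and it is the natural choice here, since the same averaging-over-maximal-chains idea underlies the paper's Lubell-function machinery and the Observation following Definition 1.6. The bound part of your proof is fully correct. The equality analysis is also sound: tightness in the second inequality forces $\binom{n}{|F|}=\binom{n}{\lfloor n/2\rfloor}$ for every $F\in\F$, and tightness in the chain count forces every maximal chain to contain exactly one set of $\F$. For odd $n$ this means that for every edge $\{A,A\cup\{x\}\}$ of the bipartite inclusion graph between levels $(n-1)/2$ and $(n+1)/2$, exactly one endpoint lies in $\F$; equivalently, $(\F,\F^{c})$ is a proper $2$-coloring of that graph. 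Since that bipartite graph is connected (given sets $A\neq B$ of size $(n-1)/2$, pick $a\in A\setminus B$, $b\in B\setminus A$ and pass through $A\cup\{b\}$ to $(A\cup\{b\})\setminus\{a\}$, strictly decreasing the symmetric difference with $B$), its proper $2$-coloring is unique up to swapping colors, so $\F$ must be exactly one of the two middle levels. Your sketch of the ``propagation argument'' is therefore correct; spelling out the connectivity step as above would make it fully rigorous.
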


\begin{defn}
Let $P$ be a finite poset, and $\F$ be a family of subsets of $[n]$.
We say that $P$ is contained in $\F$ as a (weak) subposet if there
is an injection $\varphi:P\rightarrow\mathcal{F}$ satisfying $x_{1}<_{p}x_{2}\Rightarrow\varphi(x_{1})\subset\varphi(x_{2})$
for every $x_{1},x_{2}\in P$. $\F$ is called $P$-free if $P$ is
not contained in $\F$ as a weak subposet. We define the corresponding
extremal function as $\La(n,P):=\max\{\left|\F\right|:\mathcal{F}\textnormal{ is \ensuremath{P}-free}\}$. 
\end{defn}

A $k$-chain, denoted by $P_{k}$, is defined to be the poset on the
set $\{x_{1},x_{2},\dots,x_{k}\}$ with the relations $x_{1}\le x_{2}\le\dots\le x_{k}$.
Using the above notation, Sperner's theorem can be stated as $\La(n,P_{2})=\binom{n}{\left\lfloor n/2\right\rfloor }$.
Let $\Sigma(n,k)$ denote the sum of the $k$ largest binomial coefficients
of order $n$. An important generalization of Sperner's theorem due
to Erd\H os\emph{ }\cite{erdos1945lemma} states that $\La(n,P_{k+1})=\Sigma(n,k)$.
Moreover, equality occurs if and only if $\F$ is the union of $k$
of the largest levels in $2^{[n]}$.
\begin{defn}[Posets $\Q_{2},V$ and $\Lambda$]
 The diamond poset, denoted $\Q_{2}$ (or $\D_{2}$ or $\B_{2}$),
is a poset on four elements $\{x,y,z,w\}$, with the relations $x<y,z$
and $y,z<w$. That is, $\Q_{2}$ is a subposet of a family of sets
$\A$ if there are different sets $A,B,C,D\in\A$ with $A\subset B,C$
and $B,C\subset D$. (Note that $B$ and $C$ are not necessarily
unrelated.) The $V$ poset is a poset on $\{x,y,z\}$ with the relations
$x\leq y,z$; the $\Lambda$ poset is defined on $\{x,y,z\}$ with
the relations $x,y\leq z$. That is, the $\Lambda$ is a subposet
of a family of sets $\A$ if there are different sets $B,C,D\in\A$
with $B,C\subset D$.
\end{defn}

The general study of forbidden poset problems was initiated in the
paper of Katona and Tarj\'an \cite{katona1983extremal} in 1983.
They determined the size of the largest family of sets containing
neither a $V$ nor a $\Lambda$. They also gave an estimate on the
maximum size of $V$-free families: $\left(1+\frac{1}{n}+o\!\left(\frac{1}{n}\right)\right)\binom{n}{\left\lfloor n/2\right\rfloor }\le\La(n,V)\le\left(1+\frac{2}{n}\right)\binom{n}{\left\lfloor n/2\right\rfloor }$.
This result was later generalized by De Bonis and Katona \cite{krfork}
who obtained bounds for the $r$-fork poset, $V_{r}$ defined by the
relations $x\le y_{1},y_{2},\dots,y_{r}$. Other posets for which
$\La(n,P)$ has been studied include complete two level posets, batons
\cite{thanh}, crowns $O_{2k}$ (cycle of length $2k$ on two levels,
asymptotically solved except for $k\in\{3,5\}$ \cite{evencrown,lu2014crown}),
butterfly \cite{de2005largest}, skew-butterfly \cite{methuku2014exact},
the $\mbox{N}$ poset \cite{griggs2008}, harp posets $\mathcal{H}(l_{1},l_{2},\dots,l_{k})$,
defined by $k$ chains of length $l_{i}$ between two fixed elements
\cite{griggs2012diamond}, and recently the complete 3 level poset
$K_{r,s,t}$ \cite{patkos} among others. (See \cite{griggs_survey}
for a nice survey by Griggs and Li.) 

One of the first general results is due to Bukh \cite{bukh2009set}
who determined the asymptotic value of $\La(n,P)$ for all posets
whose Hasse diagram is a tree: If $T$ is a finite poset whose Hasse
diagram is a tree of height $h(T)\ge2$, then $\La(n,T)=(h(T)-1)\binom{n}{\lfloor n/2\rfloor}\left(1+O\!\left(\frac{1}{n}\right)\right).$

Using more general structures instead of chains for double counting,
Burcsi and Nagy \cite{burcsi2013method} obtained a weaker version
of this theorem for general posets showing that $\La(n,P)\le\left(\frac{\left|P\right|+h(P)}{2}-1\right)\binom{n}{\lfloor n/2\rfloor}$.
Later this was generalized by Chen and Li \cite{chen2014note} and
recently this general bound was improved by the authors of the present
article \cite{GrMeToGeneral}. 

The most investigated poset for which even the asymptotic value of
$\La(n,P)$ has yet to be determined is the diamond $\Q_{2}$ which
is the topic of our paper. The two middle levels of the Boolean lattice
do not contain a diamond, so $\La(n,\Q_{2})\geq(2-o(1))\binom{n}{\left\lfloor n/2\right\rfloor }$.
Czabarka, Dutle, Johnston and Sz\'ekely \cite{diamond_constructions}
gave infinitely many asymptotically tight constructions by using random
set families defined from posets based on Abelian groups. Such constructions
suggest that the diamond problem is hard. Using a simple and elegant
argument, Griggs, Li and Lu \cite{griggs2012diamond} showed that
$\La(n,\Q_{2})<2.296\mbox{\ensuremath{\binom{n}{\left\lfloor n/2\right\rfloor }}}$.
Some time after they had announced this bound, Axenovich, Manske and
Martin \cite{diamond_228} improved the upper bound to $2.283\mbox{\ensuremath{\binom{n}{\left\lfloor n/2\right\rfloor }}}$.
This bound was further improved to $2.273\binom{n}{\left\lfloor n/2\right\rfloor }$
by Griggs, Li and Lu \cite{griggs2012diamond}. The best known upper
bound on $\La(n,\Q_{2})$ is $(2.25+o(1))\binom{n}{\left\lfloor n/2\right\rfloor }$
due to Kramer, Martin and Young \cite{diamond225}.
\begin{defn}
A \emph{maximal chain} or, for the rest of this article, simply a
\emph{chain} of the Boolean lattice is a sequence of sets $\emptyset,\{x_{1}\},\{x_{1},x_{2}\},\{x_{1},x_{2},x_{3}\},\dots,[n]$
with $x_{1},x_{2},x_{3}\ldots\in[n]$. We refer to $\{x_{1},\ldots,x_{i}\}$
as the $i$th set on the chain. In particular, we refer to $\{x_{1}\}$
as the first set on the chain, or just say that the chain starts with
the element $x_{1}$ (as a singleton). We refer to $x_{i}$ as the
$i$th element added to form the chain.
\end{defn}

\begin{defn}
The \emph{Lubell function} of a family of sets $\F\subseteq2^{[n]}$
is defined as 
\[
l(n,\F)=\sum_{F\in\F}\frac{1}{\binom{n}{\left|F\right|}}.
\]
 The notation is shortened to just $l(\F)$ when there is no ambiguity
as to the dimension of the Boolean lattice.
\end{defn}

\begin{observation}
The Lubell function of a family $\F$ is the average number of sets
from $\F$ on a chain, taken over all $n!$ chains. In particular,
the Lubell function of a level is 1, and the Lubell function of an
antichain $\F$ is the number of chains containing a set from $\F$
divided by $n!$. The Lubell function is additive across a union of
disjoint families of sets. Furthermore, $\left|\F\right|\leq l(\F)\binom{n}{\left\lfloor n/2\right\rfloor }$
(\cite{LYMlubell}).
\end{observation}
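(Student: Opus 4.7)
The plan is to prove each of the four assertions in turn by direct counting, using only the definition of $l(n,\F)$.

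First I would verify the chain-averaging identity. For any fixed set $F\subseteq[n]$ with $|F|=k$, the number of maximal chains in $2^{[n]}$ that contain $F$ is exactly $k!(n-k)!$: one factor counts the orderings in which the elements of $F$ are added to build up to $F$, the other counts the orderings of the elements of $[n]\setminus F$ added afterwards. Hence the fraction of the $n!$ chains that pass through $F$ equals $k!(n-k)!/n!=1/\binom{n}{k}$. By linearity of expectation (summing indicators over $F\in\F$), the average number of sets from $\F$ hit by a uniformly random chain is $\sum_{F\in\F}1/\binom{n}{|F|}=l(n,\F)$, which is exactly the chain-average interpretation.

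Next, the ``in particular'' statements follow immediately. For a level $\F=\binom{[n]}{i}$, the sum has $\binom{n}{i}$ equal terms $1/\binom{n}{i}$, giving $l(\F)=1$. For an antichain, any two sets of $\F$ are incomparable, so no chain contains more than one set of $\F$; thus the indicator random variable ``number of sets of $\F$ on a chain'' takes only values $0$ or $1$, and its average coincides with the probability of containing some set of $\F$, i.e., the number of such chains divided by $n!$. Additivity of $l$ across disjoint unions is transparent from the definition, since the sum defining $l(\F\cup\mathcal{G})$ splits into the two sums defining $l(\F)$ and $l(\mathcal{G})$.

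Finally, for the inequality $|\F|\le l(\F)\binom{n}{\lfloor n/2\rfloor}$, I would use the unimodality of the binomial coefficients: $\binom{n}{k}\le\binom{n}{\lfloor n/2\rfloor}$ for every $k$, so each summand satisfies $1/\binom{n}{|F|}\ge 1/\binom{n}{\lfloor n/2\rfloor}$, yielding
\[
l(n,\F)=\sum_{F\in\F}\frac{1}{\binom{n}{|F|}}\ge\frac{|\F|}{\binom{n}{\lfloor n/2\rfloor}},
\]
which rearranges to the desired bound.

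There is no real obstacle here; the observation is essentially a repackaging of the LYM inequality and the standard double-counting of (set, chain) incidences. The only thing to be careful about is the antichain claim, where one must explicitly invoke incomparability to deduce that the per-chain count is a $0/1$ indicator rather than a general nonnegative integer.
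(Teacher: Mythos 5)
Your proof is correct, and it is the standard double-counting argument that the paper leaves implicit (the observation is stated without proof, with a citation to Lubell). All four claims are verified exactly as an author would: chains through a $k$-set number $k!(n-k)!$, linearity gives the chain-average interpretation, incomparability gives the $0/1$ indicator for antichains, disjointness splits the sum, and unimodality of binomial coefficients gives the final bound.
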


The Lubell function was derived from the celebrated YMBL inequality
which was independently discovered by Yamamoto, Meshalkin, Bollob\'as
and Lubell. Using the Lubell function terminology, it states that
\begin{namedthm}[YMBL inequality \textup{\textmd{(Yamamoto, Meshalkin, Bollob\'as,
Lubell \cite{yamamoto1954logarithmic,meshalkin1963generalization,bollobas1965generalized,LYMlubell})}}]
If $\F\subseteq2^{[n]}$ is an antichain, then $l(\F)\leq1$.
\end{namedthm}
For a poset $P$, let $\overline{l}(n,P)$ be the maximum of $l(n,\F)$
over all families $\F\subseteq2^{[n]}$ which are both $P$-free and
contain the empty set. Let $\overline{l}(P)=\limsup_{n\to\infty}\overline{l}(n,P)$.
Griggs, Li and Lu proved that 
\begin{lem}[Griggs, Li and Lu \cite{griggs2012diamond}]
\label{lem:minpartition}
\[
\La(n,\Q_{2})\leq\left(\overline{l}(\Q_{2})+o(1)\right)\binom{n}{\left\lfloor n/2\right\rfloor }.
\]
\end{lem}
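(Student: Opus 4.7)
The plan is to bound the Lubell function $l(\F)$ of any $\Q_{2}$-free $\F\subseteq 2^{[n]}$ by $\overline{l}(\Q_{2})+\epsilon$ after discarding the sets of extreme sizes, then invoke the standard inequality $|\F|\leq l(\F)\binom{n}{\lfloor n/2\rfloor}$ and let $\epsilon\to0$.

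Fix $\epsilon>0$. By the definition of $\overline{l}(\Q_{2})$ as a $\limsup$, choose $n_{0}$ such that $\overline{l}(n',\Q_{2})\leq\overline{l}(\Q_{2})+\epsilon$ for all $n'\geq n_{0}$. Split $\F=\F_{1}\cup\F_{2}$ with $\F_{1}=\{F\in\F:\epsilon n\leq|F|\leq(1-\epsilon)n\}$. A Chernoff-type binomial estimate gives $|\F_{2}|\leq 2\sum_{k<\epsilon n}\binom{n}{k}=o\!\left(\binom{n}{\lfloor n/2\rfloor}\right)$ for any fixed $\epsilon<1/2$, so this tail is absorbed into the $o(1)$ error. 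It remains to show $l(\F_{1})\leq\overline{l}(\Q_{2})+\epsilon$.

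For this, I would use the chain interpretation $l(\F_{1})=\mathbf{E}\bigl[|\F_{1}\cap\mathcal{C}|\bigr]$ over a uniformly random maximal chain $\mathcal{C}=(C_{0},\dots,C_{n})$, and condition on the smallest set $A$ of $\F_{1}$ that $\mathcal{C}$ meets (if any). Since $A\in\F_{1}$, we have $\epsilon n\leq|A|\leq(1-\epsilon)n$, so the sub-lattice $[A,[n]]\cong 2^{[n-|A|]}$ has dimension at least $\epsilon n\geq n_{0}$ for $n$ large. The event ``$A$ is the smallest set of $\F_{1}$ on $\mathcal{C}$'' is measurable with respect to the prefix $C_{0},\dots,C_{|A|-1}$, and in a uniformly random maximal chain the prefix and suffix $C_{|A|},\dots,C_{n}$ are conditionally independent given $C_{|A|}=A$; hence the suffix is uniformly distributed over maximal chains of $[A,[n]]$. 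Since $\F_{1}\cap[A,[n]]$ is $\Q_{2}$-free and contains $A$ as its minimum element, its sub-lattice Lubell function is bounded by $\overline{l}(n-|A|,\Q_{2})\leq\overline{l}(\Q_{2})+\epsilon$. Averaging over $A$ yields $l(\F_{1})\leq\overline{l}(\Q_{2})+\epsilon$, and combining with the $\F_{2}$ estimate and sending $\epsilon\to0$ completes the proof.

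The only delicate step is the conditional-independence argument establishing that the suffix of $\mathcal{C}$ past the first hit is uniform over sub-lattice chains; the rest is bookkeeping. The middle-band truncation is essential because otherwise $|A|$ could lie within $n_{0}$ of $n$, where the limsup bound $\overline{l}(n',\Q_{2})\leq\overline{l}(\Q_{2})+\epsilon$ fails to apply and the pointwise argument breaks down.
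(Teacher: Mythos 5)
Your proposal is correct, and it is essentially the same argument used in the paper (the partitioning into middle levels vs.\ tails, conditioning on the smallest member of $\F$ that a uniformly random maximal chain meets, observing that the suffix of the chain beyond that first hit is uniform over chains of the interval $[A,[n]]$, and applying the Lubell--YMBL inequality $|\F|\leq l(\F)\binom{n}{\lfloor n/2\rfloor}$); this is exactly the chain-partitioning technique of Griggs, Li and Lu that the paper cites, and the same decomposition reappears in the paper's Section~3 when proving the main theorem. One small slip: the event ``$A$ is the smallest set of $\F_{1}$ on $\mathcal{C}$'' is measurable with respect to $(C_{0},\dots,C_{|A|})$, not $(C_{0},\dots,C_{|A|-1})$, since it includes $C_{|A|}=A$; but since it is still prefix-measurable the conditional-independence step goes through unchanged.
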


Kramer, Martin and Young used flag algebras to prove that
\begin{lem}[Kramer, Martin and Young \cite{diamond225}]
\label{lem:lubell2.25}$\overline{l}(\Q_{2})=2.25$
\end{lem}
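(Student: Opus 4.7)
The plan has two halves. For the lower bound $\overline{l}(\Q_{2}) \ge 9/4$, I would exhibit the following explicit asymptotic construction. Partition $[n] = S \sqcup T$ with $|S| = \lfloor n/2 \rfloor$ and set
\[
\F = \{\emptyset\} \cup \{\{s\} : s \in S\} \cup \binom{T}{2} \cup \{\{s,t\} : s \in S,\ t \in T\}.
\]
To verify $\F$ is $\Q_{2}$-free I would check that no $D \in \F \setminus \{\emptyset\}$ has two distinct non-empty proper subsets in $\F$: singletons $\{s\}$ and pairs $\{t_{1},t_{2}\} \in \binom{T}{2}$ have none (since no singleton from $T$ lies in $\F$), and a cross-pair $\{s,t\}$ has only $\{s\}$; this rules out any diamond with bottom $\emptyset$. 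A diamond with bottom $A \neq \emptyset$ is also impossible because the only non-empty sets in $\F$ having a proper $\F$-superset are the singletons $\{s\}$, whose supersets in $\F$ are the cross-pairs $\{s,t\}$, and two such cross-pairs share no common $\F$-superset (every member of $\F$ has size at most $2$). A direct calculation of the Lubell function gives $l(\F) = 1 + |S|/n + \binom{|T|}{2}/\binom{n}{2} + |S|\,|T|/\binom{n}{2} \to 1 + \tfrac{1}{2} + \tfrac{1}{4} + \tfrac{1}{2} = 9/4$.

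For the upper bound, the first step is a clean structural reduction. If $\F$ is $\Q_{2}$-free and contains $\emptyset$, then $\F' := \F \setminus \{\emptyset\}$ is $\Lambda$-free: three distinct sets $B, C \subsetneq D$ in $\F'$ would combine with $\emptyset \in \F$ to produce a diamond. Equivalently, every $D \in \F'$ has at most one proper subset in $\F'$. I would then partition $\F' = R \sqcup N$ with $R = \{D \in \F' : D \text{ has no proper subset in }\F'\}$ the roots and $N = \F' \setminus R$. A short argument shows that (i) the unique proper subset of each $D \in N$ is itself a root (otherwise extending down one more step produces a second proper $\F'$-subset of $D$), and (ii) both $R$ and $N$ are antichains in $2^{[n]}$ — for instance, $N_{1} \subsetneq N_{2}$ in $N$ together with the root $A$ parenting $N_{1}$ would give $N_{2}$ two distinct proper $\F'$-subsets $N_{1}, A$, contradicting $\Lambda$-freeness.

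It remains to prove the sharp inequality $l(R) + l(N) \le 5/4 + o(1)$, which together with $l(\F) = 1 + l(R) + l(N)$ gives $\overline{l}(\Q_{2}) \le 9/4$, matching the construction. Following Kramer, Martin, and Young, the plan for this step is to use the flag algebra method: one encodes the $\Q_{2}$-freeness condition at the level of $4$-element subconfigurations of $2^{[n]}$ as linear inequalities on configuration-type densities, writes the Lubell value $l(R) + l(N)$ as a linear objective in those densities, and solves the resulting semidefinite program, with a rational dual certificate confirming the exact bound $5/4$. The main obstacle, and the reason the SDP machinery is needed, is that while the elementary per-antichain bounds $l(R), l(N) \le 1$ (LYM) and the averaged unique-parent inequality $\beta_{s} \le \sum_{r<s} \alpha_{r} \binom{s}{r}$ (with $\alpha_{r}, \beta_{s}$ the densities of $R, N$ at levels $r, s$, obtained from the double count $|N_{s}| \le \sum_{r} |R_{r}|\binom{n-r}{s-r}$) are individually simple, tightly coupling them across all levels of $2^{[n]}$ to replace the trivial bound $2$ by the sharp $5/4$ appears to require exactly this global SDP-style optimisation rather than a level-by-level argument.
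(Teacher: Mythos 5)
This lemma is cited from Kramer, Martin and Young; the present paper does not reprove it and only supplies a construction for the lower bound. Your lower-bound family is precisely the paper's Example~\ref{exa:canonical} with $a=\tfrac12$, so that half matches exactly, and your diamond-freeness check is correct. For the upper bound you defer, as is appropriate, to the flag-algebra/SDP argument of the cited source; your intermediate reduction (delete $\emptyset$ to get a $\Lambda$-free $\F'$, split $\F'=R\sqcup N$ with $R$ the minimal sets and $N$ the rest, observe both are antichains and that the unique proper $\F'$-subset of each $D\in N$ is a root) is sound, as is the double-counting inequality $\beta_{s}\le\sum_{r<s}\alpha_{r}\binom{s}{r}$ via $\binom{n}{r}\binom{n-r}{s-r}=\binom{n}{s}\binom{s}{r}$. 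The one thing worth flagging is that a main point of the present paper is that the flag-algebra step you describe as essential is in fact avoidable: Lemma~\ref{lem:lubell}, proved in Section~\ref{sec:invertedV} by partitioning the maximal chains according to the singleton with which they start and running an intricate induction on $n$, yields the same $\tfrac{5}{4}$ bound for $\Lambda$-free families (under the mild extra hypothesis that the topmost $n'$ levels are empty), and combined with Lemma~\ref{lem:minpartition} this gives an elementary re-proof of Theorem~\ref{thm:La2.25}. So while your sketch is faithful to KMY, from this paper's standpoint the SDP machinery you identify as the key obstacle is dispensable, and the interesting work lies in the chain-counting induction that replaces it.
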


thereby proving
\begin{thm}[Kramer, Martin and Young \cite{diamond225}]
\label{thm:La2.25}
\[
\La(n,\Q_{2})\leq(2.25+o(1))\binom{n}{\left\lfloor n/2\right\rfloor }.
\]
\end{thm}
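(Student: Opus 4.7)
The plan is to derive the theorem as an immediate consequence of the two preceding lemmas. First I would invoke Lemma \ref{lem:minpartition} to reduce bounding $\La(n,\Q_{2})$ to bounding the asymptotic maximum Lubell function $\overline{l}(\Q_{2})$ over $\Q_{2}$-free families containing the empty set, and then apply Lemma \ref{lem:lubell2.25} to substitute in the value $\overline{l}(\Q_{2})=2.25$. Chaining the two inequalities yields the stated bound directly, with the $o(1)$ term inherited from Lemma \ref{lem:minpartition}.

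Because both inputs are treated here as black boxes, there is essentially nothing to verify beyond the substitution; the substantive content is hidden entirely inside the two lemmas. Lemma \ref{lem:minpartition} is the averaging/chain-counting reduction (in the spirit of the YMBL inequality, together with an application to random sub-Boolean-lattices to pass from $\overline{l}(n,\Q_{2})$ to $\overline{l}(\Q_{2})$) that translates a Lubell-function bound into a bound on $|\F|$, while Lemma \ref{lem:lubell2.25} is the flag algebra computation of Kramer, Martin and Young that certifies $\overline{l}(\Q_{2})=2.25$.

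The only bookkeeping issue is to ensure that the passage from $\overline{l}(n,\Q_{2})$ to its $\limsup$ $\overline{l}(\Q_{2})$ can be absorbed into the $o(1)$ factor, but this is immediate from the definition $\overline{l}(\Q_{2})=\limsup_{n\to\infty}\overline{l}(n,\Q_{2})$ together with the $o(1)$ already built into Lemma \ref{lem:minpartition}. Consequently there is no genuine mathematical obstacle in the proof of Theorem \ref{thm:La2.25} itself; the theorem is stated here primarily as the benchmark that the main result of the paper (the improved constant $2.20711$) will beat, presumably by strengthening rather than replacing the reduction encoded in Lemma \ref{lem:minpartition} via the partition of maximal chains announced in the abstract.
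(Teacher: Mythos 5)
Your proof is exactly the paper's: Theorem \ref{thm:La2.25} is obtained by composing Lemma \ref{lem:minpartition} with Lemma \ref{lem:lubell2.25}, and the paper presents it as nothing more than that substitution. Your observation about absorbing the $\limsup$ into the $o(1)$ is correct and is the only minor bookkeeping involved.
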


The following construction shows that $\bar{l}(\Q_{2})\ge2.25$ in
Lemma \ref{lem:lubell2.25}. There are other constructions known as
well.
\begin{example}
Let $\F\subseteq2^{[n]}$ consist of all the sets of the following
forms: $\emptyset,\{e\},\{e,o\},\{o_{1},o_{2}\}$ where $e$ denotes
any even number in $[n]$, and $o$, $o_{1}$ and $o_{2}$ denote
any odd numbers in $[n]$. This family is diamond-free, and $l(\F)=2.25\pm o(1)$.
\end{example}

\begin{example}
\label{exa:canonical}This construction is a generalization of the
previous one. Let $A\subseteq[n]$ with $\left|A\right|=an$. Let
$\F\subseteq2^{[n]}$ consist of all the sets of the following forms:
$\emptyset,\{e\},\{e,o\},\{o_{1},o_{2}\}$ where now $e$ denotes
any element of $A$, while $o$, $o_{1}$ and $o_{2}$ denote any
elements of $[n]\setminus A$. This family is diamond-free, and $l(\F)=2+a-a^{2}\pm o(1)$.
This family contains all size 2 sets that do not form a diamond with
$\emptyset$ and the singletons, so all maximal diamond-free families
on levels 0, 1 and 2 that contain $\emptyset$ are of this form.
\end{example}

The following restriction of the problem of diamond-free families
has been investigated: How big can a diamond-free family be if it
can only contain sets from the middle three levels of $2^{[n]}$ (denoted
$\B(n,3)$)? Better bounds are known with this restriction. Axenovich,
Manske and Martin showed that
\begin{thm}[Axenovich, Manske and Martin \cite{diamond_228}]
\label{thm:3levels2.207}If $\F\subseteq\B(n,3)$ is diamond-free,
then $\left|\F\right|\leq\bigl(2.20711+o(1)\bigr)\binom{n}{\left\lfloor n/2\right\rfloor }$.
\end{thm}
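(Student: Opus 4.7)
The plan is to bound the Lubell function $l(\F)$ and then apply the Observation from the introduction (which gives $|\F|\le l(\F)\binom{n}{\lfloor n/2\rfloor}$), reducing the task to showing $l(\F)\le(3+\sqrt{2})/2+o(1)$ for every diamond-free $\F\subseteq\B(n,3)$, since $2.20711\approx(3+\sqrt{2})/2$. Setting $m=\lfloor n/2\rfloor$ and letting $\phi,\mu,\psi$ denote the densities of $\F$ at the three middle levels $m-1,m,m+1$, this becomes $\phi+\mu+\psi\le(3+\sqrt{2})/2+o(1)$.

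The core step is a paired chain count. I would partition the $n!$ maximal chains into $n!/2$ unordered pairs, where two chains are paired iff they agree on their level-$(m-1)$ set $X$ and their level-$(m+1)$ set $Z$ (and on the order in which all other elements are added) but differ in the middle set $Y$: one of two choices $Y_1,Y_2$ obtained by adding to $X$ one of the two elements of $Z\setminus X$. Each pair contributes
\[
2\mathbf{1}[X\in\F]+2\mathbf{1}[Z\in\F]+\mathbf{1}[Y_1\in\F]+\mathbf{1}[Y_2\in\F]
\]
to $\sum_C |\F\cap C|$, and the diamond-free hypothesis forces $\mathbf{1}[Y_1\in\F]+\mathbf{1}[Y_2\in\F]\le 1$ whenever $X,Z\in\F$. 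Aggregating over pairs and normalizing produces the key inequality
\[
l(\F)\;\le\;1+\phi+\psi-\tfrac{1}{2}\pi_{XZ},
\]
where $\pi_{XZ}$ is the fraction of $(X,Z)$ pairs (with $|X|=m-1$, $|Z|=m+1$, $X\subset Z$) having both $X,Z\in\F$; equivalently, $\mu\le 1-\pi_{XZ}/2$.

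This inequality alone only yields $l(\F)\le 5/2$, so a lower bound on $\pi_{XZ}$ as a function of $\phi+\psi$ is needed. Two applications of local LYM show that the two-step up-shadow of $\F_{m-1}$ has density at least $\phi$ at level $m+1$, forcing $\F_{m+1}$ to intersect it whenever $\phi+\psi>1$. The target is a quantitative refinement of the form $\pi_{XZ}\ge 2(\phi+\psi)-(1+\sqrt{2})$ on the middle range $\phi+\psi\in\bigl[(1+\sqrt{2})/2,(2+\sqrt{2})/2\bigr]$; plugged back into the paired-chain bound it gives exactly $l(\F)\le(3+\sqrt{2})/2$ on that range. On the sparse range $\phi+\psi\le(1+\sqrt{2})/2$ the trivial $\mu\le 1$ already suffices, and on the dense range $\phi+\psi>(2+\sqrt{2})/2$ one uses instead that $\F_m$ is an independent set in the Johnson-type graph $H$ on $\binom{[n]}{m}$ whose edges are the middle pairs coming from $(X,Z)\in\F_{m-1}\times\F_{m+1}$; as $H$ approaches the full Johnson graph $J(n,m)$, whose independence number at $n=2m$ is $o\bigl(\binom{n}{m}\bigr)$, we get $\mu=o(1)$ and hence $l(\F)\le 2+o(1)$.

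The main obstacle is extracting the sharp constant $\sqrt{2}$ in the middle range. A naive application of local LYM only yields $\pi_{XZ}\ge(\phi+\psi-1)/\binom{m+1}{2}=o(1)$, far too weak, because it counts only distinct sets in the shadow rather than incidences. The desired inequality requires either an incidence-counted Kruskal--Katona-style bound (tracking multiplicities of $X\in\F_{m-1}$ below each $Z\in\F_{m+1}\cap\partial^{++}\F_{m-1}$) or a matching/fractional-independence analysis of $H$. Once this sharp incidence inequality is established, combining it with the paired-chain bound and verifying the two simpler regimes is a short case analysis that pins down the extremum at $(3+\sqrt{2})/2$.
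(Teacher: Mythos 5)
This paper does not prove the statement: Theorem~\ref{thm:3levels2.207} is quoted from Axenovich, Manske, and Martin~\cite{diamond_228} as background, and the present paper instead establishes the same constant \emph{without} the three-level restriction (Theorem~\ref{thm:main}) by a different route: grouping chains by the minimal set of $\F$ on them, reducing to $\Lambda$-free families, and running an induction over a parametrized Lubell bound (Lemma~\ref{lem:lubell}). Your chain-pairing reduction works only inside the three fixed levels, and the first half of it is sound: pairing the chains that share their level-$(m-1)$ set $X$ and level-$(m+1)$ set $Z$ and differ only in the middle set, and applying diamond-freeness to each such pair, correctly gives
\[
\mu\le 1-\frac{1}{2}\pi_{XZ},\qquad\text{hence}\qquad l(\F)\le 1+\phi+\psi-\frac{1}{2}\pi_{XZ}.
\]

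The difficulty is that the key step is not merely unproven but unachievable as stated. Since $\pi_{XZ}$ is the fraction of incidences $(X,Z)$ with $X\in\F_{m-1}$ \emph{and} $Z\in\F_{m+1}$, and each endpoint event separately has incidence density exactly $\phi$ resp.\ $\psi$, one always has $\pi_{XZ}\le\min(\phi,\psi)\le(\phi+\psi)/2$. Your target $\pi_{XZ}\ge 2(\phi+\psi)-(1+\sqrt{2})$ would therefore force $(\phi+\psi)/2\ge 2(\phi+\psi)-(1+\sqrt{2})$, i.e.\ $\phi+\psi\le\frac{2}{3}(1+\sqrt{2})\approx 1.609$, yet you claim it up to $(2+\sqrt{2})/2\approx 1.707$; for $\phi=\psi$ with $\phi+\psi$ in $(1.609,1.707]$ the target is simply impossible. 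The dense-range argument is likewise only a heuristic as written, and in any case would have to take over already around $\phi+\psi\approx 1.61$, where $\pi_{XZ}$ may be as small as $\phi+\psi-1\approx 0.61$ and the graph $H$ is nowhere near the full Johnson graph. The structural reason the plan stalls is that your count invokes the diamond-free hypothesis exactly once, on the two middle sets given $(X,Z)$; the actual proofs (both \cite{diamond_228} and the generalization in this paper) extract more by tracking maximal--non-maximal chains and iterating over minimal sets, which re-uses the hypothesis recursively. As stated, the proposal does not close.
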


Later, Manske and Shen improved it to $2.1547\mbox{\ensuremath{\binom{n}{\left\lfloor n/2\right\rfloor }}}$
in \cite{manske2013three} and recently, Balogh, Hu, Lidick\'y and
Liu gave the best known bound of $2.15121\binom{n}{\left\lfloor n/2\right\rfloor }$
in \cite{diamond21512} using flag algebras.
\begin{defn}
We call a chain \emph{maximal\textendash non-maximal (MNM)} with respect
to (w.r.t.) $\F$ if it contains a set from $\F$, and the biggest
set contained in $\F$ on the chain is not maximal in $\F$ (i.e.,
there are other sets from $\F$ containing it on some other chains).
\end{defn}

It is easy to see that an $\emptyset$-free family is $\Lambda$-free
if and only if the family we get by adding $\emptyset$ is diamond-free;
adding $\emptyset$ increases the Lubell function by 1. In Section
\ref{sec:invertedV} of this paper, we prove the following lemma:
\begin{lem}
\label{lem:lubell}Let $\F\subseteq2^{[n]}$ be a $\Lambda$-free
family that does \emph{not} contain the empty set, nor any set of
size bigger than $n-n'$ for some $n'\in\naturals$ (that can be chosen
independently of $n$). Assume that there are $cn!$ MNM chains w.r.t.\ $\F$.
Then $l(\F)\leq1-\min\!\left(c+\frac{1}{n'},\frac{1}{4}\right)+\sqrt{\min\!\left(c+\frac{1}{n'},\frac{1}{4}\right)}+\frac{3}{n'}$.
\end{lem}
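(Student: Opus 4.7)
The plan is to decompose $\F$ into its non-maximal and maximal parts, bound each Lubell contribution, and combine via a Cauchy--Schwarz argument that exploits the size restriction $|D|\leq n-n'$.

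First I would decompose $\F=\A\sqcup\B$, where $\A$ is the set of non-maximal elements of $\F$ and $\B$ the set of maximal elements. The $\Lambda$-free condition forbids any 3-chain $F_{1}\subsetneq F_{2}\subsetneq F_{3}$ in $\F$ (which would give a $\Lambda$ at $F_{3}$), so $\A$ and $\B$ are both antichains, and every $D\in\B$ has at most one proper $\F$-subset, which must lie in $\A$. Partition $\B=\B_{*}\sqcup\bigsqcup_{F\in\A}\B_{F}$ where $\B_{F}=\{D\in\B:F\subsetneq D\}$ and $\B_{*}$ consists of the maximal elements with no $\F$-predecessor. A uniformly random maximal chain of $2^{[n]}$ therefore contains $N\in\{0,1,2\}$ sets of $\F$; writing $p_{k}=\Pr(N=k)$, we have $l(\F)=1-p_{0}+p_{2}=l(\A)+l(\B)$. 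Counting chains by the maximum $\F$-set they contain gives $c+l(\B)=1-p_{0}\leq 1$, so $l(\B)\leq 1-c$.

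For each $F\in\A$ let $q(F)=\sum_{D\in\B_{F}}1/\binom{n-|F|}{|D|-|F|}$, the probability that a random chain through $F$ continues to some $D\in\B_{F}$. Since $\B_{F}-F$ is an antichain in the link $2^{[n]\setminus F}$, YMBL gives $q(F)\leq 1$, and summing yields $c=\sum_{F}p(F)(1-q(F))$ and $l(\A)=c+\sum_{F}p(F)q(F)$. The key estimate is Cauchy--Schwarz with weights $w_{F}=(1-q(F))+1/n'$:
\[
l(\A)^{2}\leq\Bigl(\sum_{F}p(F)w_{F}\Bigr)\Bigl(\sum_{F}p(F)/w_{F}\Bigr)=\bigl(c+l(\A)/n'\bigr)\cdot M,
\]
where $M=\sum_{F}p(F)/\bigl((1-q(F))+1/n'\bigr)$. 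The main technical step, and the main obstacle, is to prove $M\leq 1$ (or at most $1+O(1/n')$). This is precisely where the hypothesis $|D|\leq n-n'$ enters, since no $\F$-set has size exceeding $n-n'$ and so the top $n'$ positions of any chain through $F$ are guaranteed free of $\F$-sets; the $1/n'$ regularisation of $w_{F}$ is designed to encode this combinatorial slack. I would try to establish $M\leq 1$ via a chain-partitioning argument in the spirit of the paper's approach, for instance by augmenting each $\B_{F}$ with ``fictional'' sets in the top $n'$ levels and then applying YMBL to the enlarged antichain.

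Granting $M\leq 1$, together with the trivial bound $l(\A)\leq 1$ from YMBL on the antichain $\A$, the Cauchy--Schwarz inequality gives $l(\A)^{2}\leq c+1/n'$, hence $l(\A)\leq\sqrt{c+1/n'}$. Combining with $l(\B)\leq 1-c$ yields $l(\F)\leq 1-c+\sqrt{c+1/n'}$; the extra $3/n'$ in the statement absorbs lower-order slack (for example if the key inequality only gives $M\leq 1+O(1/n')$). Finally, the $\min(\,\cdot\,,1/4)$ in the statement merely clips $c+1/n'$ at the maximiser of $1-x+\sqrt{x}$, so that the bound does not need to improve past that point.
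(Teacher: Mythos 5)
Your decomposition and the exact identities $c=\sum_{F}p(F)(1-q(F))$ and $l(\A)=c+\sum_{F}p(F)q(F)$ are all correct, and the Cauchy--Schwarz manipulation is algebraically sound. But the whole argument hinges on the unproven estimate $M=\sum_{F}p(F)/w_{F}\le 1$, which you yourself flag as ``the main obstacle,'' and this is not a peripheral technicality: in the regime where the lemma is tight, $M\le 1$ is \emph{equivalent} to the bound you are trying to prove, so nothing has been reduced. Concretely, if the deficiencies $1-q(F)$ all equal some constant $s$, then $w_{F}\equiv s+1/n'$, your Cauchy--Schwarz inequality $l(\A)^{2}\leq\bigl(c+l(\A)/n'\bigr)M$ holds with equality, and $M=l(\A)/(s+1/n')\le 1$ unwinds exactly to $l(\A)^{2}\le c+l(\A)/n'$, i.e.\ to $l(\A)\le\sqrt{c+1/n'}$ modulo the trivial $l(\A)\le 1$. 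This uniform case is precisely the one realised by the paper's extremal Example~\ref{exa:canonical}, so the proposed route trades the lemma for an estimate that is, in the critical case, the same statement.

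The suggested path to $M\le 1$ (``augment each $\B_{F}$ with fictional sets in the top $n'$ levels and apply YMBL'') would not close the gap either, because YMBL applied to each link $\B_{F}-F$ separately can only reprove $q(F)\le 1$; it sees nothing across different values of $F$. The crude bound $M\le n'\,l(\A)$ can be as large as $n'$, and what actually forces $M$ down is the interaction between the families $\B_{F}$: each $D\in\B_{F}$ must avoid every other member of $\A$ (else $D$ has two $\F$-subsets, a $\Lambda$), so whenever $\A$ has substantial Lubell mass the link antichains $\B_{F}-F$ are pushed away from full levels and $1-q(F)$ cannot be uniformly small. The single scalar $q(F)$ does not record this, and that is exactly why the paper replaces it with the much heavier inductive invariant of Lemma~\ref{lem:ind} --- the forbidden set $X$ and forbidden antichain $\X$ together with the parameters $x,\alpha,\mu$ --- which carries the cross-$F$ constraints down the recursion. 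Your top-level idea (minimal vs.\ maximal split, link antichains, Cauchy--Schwarz with a $1/n'$ regulariser) is a genuinely different and appealing template, but as written the key inequality is asserted rather than proved, and proving it would require recreating essentially the structural bookkeeping that the paper's induction is built to supply.
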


It is easy to see that in Example \ref{exa:canonical} the number
of MNM-chains is approximately $a^{2}n!$ (so $a\approx\sqrt{c}$):
these are the chains whose second set is $\{e_{1},e_{2}\}$ with $e_{1},e_{2}\in A$.
Thus, this lemma is (asymptotically) sharp, and states that for a
given number of MNM chains, Example \ref{exa:canonical} cannot be
beaten (with some restriction on the sizes of the sets). Barring the
requirement that the topmost $n'$ levels be empty, Lemma \ref{lem:lubell}
is a generalization of Lemma \ref{lem:lubell2.25}. The proof of Lemma
\ref{lem:minpartition} in \cite{diamond225} actually works with
the restriction of Lemma \ref{lem:lubell} concerning the topmost
sets (that there is no set of size bigger than $n-n'$) with $n'=n/2-n^{2/3}$,
immediately giving a new proof of Theorem \ref{thm:La2.25}. Our proof
of Lemma \ref{lem:lubell} includes an intricate induction step and
a (non-combinatorial) lemma about functions involving a lot of elementary
algebra and calculus; but it does not require flag algebras, and it
does not use details of the structure of $\F$ above the second level
(except inside the induction).

Section \ref{sec:diamond} of this paper uses Lemma \ref{lem:lubell}
to prove our main theorem:
\begin{thm}
\label{thm:main}$\La(n,\Q_{2})\leq\left(\frac{\sqrt{2}+3}{2}+o(1)\right)\binom{n}{\left\lfloor n/2\right\rfloor }<\bigl(2.20711+o(1)\bigr)\binom{n}{\left\lfloor n/2\right\rfloor }$.
\end{thm}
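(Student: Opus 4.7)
The plan is to combine the chain-partitioning reduction of Kramer, Martin, and Young with Lemma~\ref{lem:lubell}, supplemented by an auxiliary bound exploiting the structural constraints of diamond-freeness.

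First, as noted in the paragraph preceding the theorem, the proof of Lemma~\ref{lem:minpartition} in \cite{diamond225} can be adapted to the topmost-set restricted setting of Lemma~\ref{lem:lubell}. So the task reduces to showing that for every diamond-free family $\F \subseteq 2^{[n]}$ containing the empty set and with no set of size greater than $n - n'$ (for $n' = n/2 - n^{2/3}$), the Lubell function satisfies $l(\F) \leq (3+\sqrt{2})/2 + o(1)$. Setting $\F' := \F \setminus \{\emptyset\}$, which is $\Lambda$-free, a direct application of Lemma~\ref{lem:lubell} gives
$$l(\F) = 1 + l(\F') \leq 2 - \min\!\left(c + \tfrac{1}{n'}, \tfrac{1}{4}\right) + \sqrt{\min\!\left(c + \tfrac{1}{n'}, \tfrac{1}{4}\right)} + \tfrac{3}{n'},$$
where $cn!$ is the number of MNM chains with respect to $\F'$. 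This bound alone yields only $2.25 + o(1)$ in the worst case (the $\min$ caps at $1/4$), recovering Theorem~\ref{thm:La2.25}, so an additional argument is needed.

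The key additional ingredient is a second, complementary bound on $l(\F)$, obtained via a partitioning of the maximal chains of $2^{[n]}$ that exploits the following structural consequence of diamond-freeness: for $A := \{x \in [n] : \{x\} \in \F\}$ and any $F \in \F$ with $|F| \geq 2$, we have $|F \cap A| \leq 1$, since otherwise $\emptyset, \{x\}, \{y\}, F$ with distinct $x, y \in F \cap A$ would be a diamond in $\F$. This level-wise constraint on the sets of $\F$, together with a careful count of the number of MNM chains produced by a given singleton density $a := |A|/n$ (intuitively, chains whose first two elements both lie in $A$ have their 2-set excluded from $\F$, forcing many MNM chains), produces a relationship between $c$ and $a$ that becomes decisive precisely in the regime where Lemma~\ref{lem:lubell} saturates.

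Combining and jointly optimizing the two bounds, the critical balance point occurs at $a = 1/\sqrt{2}$ and correspondingly $c = 1/2$, where the second bound effectively removes the truncation of the $\min$ and the Lemma~\ref{lem:lubell} bound $2 - c + \sqrt{c}$ can be applied at $c = 1/2$, giving
$$l(\F) \leq 2 - \tfrac{1}{2} + \sqrt{\tfrac{1}{2}} + o(1) = \tfrac{3+\sqrt{2}}{2} + o(1).$$
The most delicate step will be establishing the second bound rigorously: the partitioning of chains must be chosen so that, in the regime $c \geq 1/4$, the contribution of $\F$-sets on chains of each class is strictly controlled beyond what Lemma~\ref{lem:lubell} alone provides, and the joint optimization between $a$ and $c$ must indeed give the clean value $(3+\sqrt{2})/2$ at the balance point.
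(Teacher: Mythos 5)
Your outline identifies the correct starting point (Lemma~\ref{lem:lubell} alone caps at $2.25$ because of the $\min(\cdot,1/4)$ truncation, so an additional mechanism is needed), and you correctly guess the final numerical value, but the route you sketch is not the paper's and, more importantly, the central claim in it — that a ``second bound'' relating $c$ to the singleton density $a$ lets you ``remove the truncation'' and evaluate $2-c+\sqrt c$ at $c=1/2$ — is never stated precisely, never proved, and is not what happens. Two specific problems. First, the observation that $|F\cap A|\le 1$ for $|F|\ge 2$ (with $A$ the singleton set) is already exactly what powers the proof of Lemma~\ref{lem:lubell} internally; it is not new information available to combine with the lemma from the outside, so it cannot supply the complementary bound you posit. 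Second, there is no step that ever feeds $c=1/2$ into the untruncated expression — that coincidence of numbers is an artifact.

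What the paper actually does is the chain-grouping of Griggs–Li–Lu and Kramer–Martin–Young: group maximal chains by the minimal set $A=\min(\C\cap\F)$ on them. For each $A$ that is \emph{globally} minimal in $\F$, the family $(\F\cap[A,[n]])-A$ with $\emptyset$ removed is $\Lambda$-free, so Lemma~\ref{lem:lubell} bounds its Lubell function by $1+f(c(A))+o(1)$ where $c(A)$ is the MNM density in the upper interval $[A,[n]]$. For $A$ that is \emph{not} globally minimal, diamond-freeness forces any chain with $\min(\C\cap\F)=A$ to carry at most two sets of $\F$. One also needs the duality trick: after possibly replacing $\F$ by $\{[n]\setminus A : A\in\F\}$, one may assume that the fraction $C$ of chains which either miss $\F$ or whose minimum is not globally minimal dominates the MNM-chain fraction. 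Combining via Jensen (using concavity of $f$) gives $l(\F)\le 1+C+(1-C)f\!\left(\tfrac{C}{1-C}\right)+o(1)$, and the optimum of $2-C+\sqrt{C(1-C)}$ over $C\in[0,\tfrac15]$ occurs at $C=\tfrac{2-\sqrt 2}{4}$ giving $\tfrac{3+\sqrt 2}{2}$. The truncation is never removed; the threshold $C/(1-C)=1/4$, i.e.\ $C=1/5$, simply delimits the region over which one optimizes. You would need to supply the full chain-grouping argument and the duality/Jensen step for the proof to go through; as written the proposal asserts the crucial step rather than proving it.
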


The proof is inspired by the proof of Theorem \ref{thm:3levels2.207}
(the same bound when restricted to 3 levels) as in \cite{diamond_228},
using the idea of grouping chains by the smallest set contained in
$\F$ on a chain (as developed in \cite{griggs2012diamond} and \cite{diamond225}).\global\let\addcontentslinesave\addcontentsline\renewcommand{\addcontentsline}[3]{}

\section{$\Lambda$-free families \textendash{} Proof of Lemma \ref{lem:lubell}\label{sec:invertedV}\addcontentslinesave{toc}{section}{\ref{sec:invertedV}
Inverted V-free families \textendash{} Proof of Lemma \ref{lem:lubell}}}

\subsection{Definitions and main lemma}
\begin{defn}
\global\let\addcontentsline\addcontentslinesave\label{def:functions}We
define the following functions:%

\begin{itemize}
\item For $x\in[0,1],c\in[0,\infty)$,
\[
f(x,c)=\begin{cases}
1-x+\left(\frac{1}{4\left(x-x^{2}\right)}-1\right)c & \tif x\leq\frac{1}{2}\tand c<4\left(x-x^{2}\right)^{2}\\
x^{2}-2x+1-c+\sqrt{c} & \tif x\leq\frac{1}{2}\tand4\left(x-x^{2}\right)^{2}\leq c\leq\frac{1}{4}\\
x^{2}-2x+1.25 & \tif x\leq\frac{1}{2}\tand\frac{1}{4}\leq c\\
1-x & \tif\frac{1}{2}\leq x.
\end{cases}
\]
\begin{tikzwhenenabled}
\begin{figure}
\subfloat[Values of $f(x,c)$ plotted in $x$, for $c=0,0.0125,0.025,\ldots,0.25$
(bottom to top). Note that the $x\geq0.5$ part of the plots coincide.]{\beginpgfgraphicnamed{diamond_free_families_figure1}
\tikz
\datavisualization
	[scientific axes, x axis={length=6cm,ticks and grid={step=0.25}}, y axis={length=8.5cm,ticks and grid={step=0.25}}, visualize as smooth line/.list={100,201.0,202.0,....0,220.0,301.0,302.0,....0,319.0}]
data [set=100,format=function] {
	var c : {0};
	var x_ : interval [0:0.6] samples 2;
	func x = \value{x_};
	func y = 1-\value{x_};
}
data [format=function] {
	var i : {1,2,...,20};
	func c = \value i/80;
	var x_ : interval [0:(1-sqrt(1-2*sqrt(\value c)))/2] samples 11;
	func x = \value{x_};
	func y = \value{x_}^2-2*\value{x_}+1-\value c+sqrt(\value c);
	func set = 200+\value i;
}
data [format=function] {
	var i : {1,2,...,19};
	func c = \value i/80;
	var x_ : interval [(1-sqrt(1-2*sqrt(\value c)))/2:0.5] samples 11;
	func x = \value{x_};
	func y = 1-\value{x_}+(1/(4*(\value{x_}-\value{x_}^2))-1)*\value c;
	func set = 300+\value i;
};
\endpgfgraphicnamed}\hspace*{\fill}\subfloat[Values of $f(x,c)$ plotted in $c$, for $x=0,0.05,0.01,\ldots,1$
(top to bottom).]{\beginpgfgraphicnamed{diamond_free_families_figure2}
\tikz
\datavisualization
	[scientific axes, x axis={length=8cm,ticks and grid={step=0.25}}, y axis={length=10cm,ticks and grid={step=0.25}}, visualize as smooth line/.list={110.0,111.0,....0,120.0,200.0,201.0,....0,209.0,300.0,301.0,....0,309.0,401.0,402.0,....0,409.0}]
data [format=function] {
	var i : {10,11,...,20};
	func x_ = \value i/20;
	var c : {0,1)};
	func x = \value{c};
	func y = 1-\value{x_};
	func set = 100+\value i;
}
data [format=function] {
	var i : {1,2,...,9};
	func x_ = \value i/20;
	var c : {0,4*(\value{x_}-\value{x_}^2)^2};
	func x = \value{c};
	func y = 1-\value{x_}+(1/(4*(\value{x_}-\value{x_}^2))-1)*\value c;
	func set = 400+\value i;
}
data [format=function] {
	var i : {0,1,...,9};
	func x_ = \value i/20;
	var c : interval[4*(\value{x_}-\value{x_}^2)^2:0.25] samples 20-2*\value i;
	func x = \value{c};
	func y = \value{x_}^2-2*\value{x_}+1-\value c+sqrt(\value c);
	func set = 200+\value i;
}
data [format=function] {
	var i : {0,1,...,9};
	func x_ = \value i/20;
	var c : interval[0.25:1] samples 2;
	func x = \value{c};
	func y = \value{x_}^2-2*\value{x_}+1.25;
	func set = 300+\value i;
};
\endpgfgraphicnamed}
\end{figure}
\end{tikzwhenenabled}
\item For $x\in[0,1),c\in[0,\infty),a\in[0,1-x),\tilde{a}\in\left[0,\min\!\left(a,\frac{c}{x+a}\right)\right]$,
\[
g(x,c,a,\tilde{a})=a+(1-x-a)f\!\left(x+a,\frac{c-\tilde{a}(x+a)}{1-x-a}\right)+2\tilde{a}(1-x-a).
\]
\item For $x\in[0,1),c\in[0,\infty),a\in[0,1-x),\tilde{a}\in\left[0,\min\!\left(a,\frac{c}{x+a}-x\right)\right]$,
\[
h(x,c,a,\tilde{a})=a+(1-x-a)f\!\left(x+a,\frac{c-(x+\tilde{a})(x+a)}{1-x-a}\right)+2\tilde{a}(1-x-a)+x-3x(x+a).
\]
\end{itemize}
\end{defn}

\begin{lem}
\label{lem:functions}The functions above satisfy the following conditions:

\begin{enumerate}
\item \label{enu:x=00003D0}For all $c\in[0,\infty)$, if $\tilde{c}=\min\!\left(c,\frac{1}{4}\right)$,
then $f(0,c)=1-\tilde{c}+\sqrt{\tilde{c}}$.
\item \label{enu:concave}$f(x,c)$ is concave and monotonously increasing
in $c$, and monotonously decreasing in $x$.
\item \label{enu:g}For all $x\in[0,1],c\in[0,\infty),a\in[0,1-x),\tilde{a}\in\left[0,\min\!\left(a,\frac{c}{x+a}\right)\right]:g(x,c,a,\tilde{a})\leq f(x,c)$.
\item \label{enu:h}For all $x\in[0,1],c\in[0,\infty),a\in[0,1-x),\tilde{a}\in\left[0,\min\!\left(a,\frac{c}{x+a}-x\right)\right]:h(x,c,a,\tilde{a})\leq f(x,c)$.
\item \label{enu:1-x}For all $c\in[0,\infty):1-x\leq f(x,c)$.
\end{enumerate}
\end{lem}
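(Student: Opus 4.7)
The plan is to address the five items in the order \ref{enu:x=00003D0}, \ref{enu:concave}, \ref{enu:1-x}, \ref{enu:g}, \ref{enu:h}, since the later claims use the earlier ones (especially the concavity from item \ref{enu:concave}). Items \ref{enu:x=00003D0}, \ref{enu:concave}, \ref{enu:1-x} are essentially piecewise algebra; items \ref{enu:g} and \ref{enu:h} are the main obstacle.

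For item \ref{enu:x=00003D0}, substituting $x=0$ collapses the first piece of the definition of $f$ because $4(x-x^2)^2 = 0$, leaving only $1 - c + \sqrt{c}$ on $[0,1/4]$ and $5/4$ on $[1/4,\infty)$, which together read $1 - \tilde{c} + \sqrt{\tilde{c}}$. For item \ref{enu:concave}, I would fix $x \le 1/2$ and examine $c \mapsto f(x,c)$ on the three pieces: it is linear with slope $\frac{1}{4(x-x^2)} - 1 \ge 0$, strictly concave with slope $\frac{1}{2\sqrt{c}} - 1 \ge 0$ (using $c \le 1/4$), and then constant, respectively. The one-sided slopes at $c = 4(x-x^2)^2$ both equal $\frac{1}{4(x-x^2)} - 1$ and at $c = 1/4$ both equal $0$, so $f$ is of class $C^1$ in $c$ with non-increasing non-negative derivative, giving concavity and monotonicity in $c$. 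Monotonicity in $x$ is verified piecewise: $2x - 2 < 0$ in the middle and top pieces, $-1 - \frac{c(1-2x)}{4(x-x^2)^2} < 0$ in the bottom piece, and the $x \ge 1/2$ formula matches the other pieces continuously at $x = 1/2$.

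For item \ref{enu:1-x}, case analysis by piece: equality holds on $x \ge 1/2$; on the linear piece the extra term in $c$ is non-negative; on the top piece the inequality reduces to $(x-1/2)^2 \ge 0$; and on the square-root piece it is $\sqrt{c} - c \ge x(1-x)$, which follows because $c \mapsto \sqrt{c} - c$ is increasing on $[0,1/4]$ and at the lower boundary $c = 4(x-x^2)^2$ one gets $2(x-x^2)(1 - 2(x-x^2)) \ge x - x^2$ (using $x - x^2 \le 1/4$).

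Item \ref{enu:g} is the main obstacle. The crucial observation is that $C = (c - \tilde{a} y)/(1-y)$ with $y = x+a$ is affine in $\tilde{a}$, so by item \ref{enu:concave} the function $\tilde{a} \mapsto g$ is concave. Its maximum is attained either at a boundary of the allowed interval or at the unique interior critical point where $f_c(y,C) = 2(1-y)/y$. I would case-analyze on which piece of the definition of $f$ the pair $(y, C)$ falls into. In the linear and constant pieces of $f$ the slope in $c$ is fixed, so $g$ is affine in $\tilde{a}$ and its supremum is at an endpoint; this reduces the inequality to a quadratic in $a$ which is handled using $\tilde{a}(x+a) \le c$ together with item \ref{enu:1-x} as a lower bound on $f(x,c)$. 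In the square-root piece, the critical-point condition $\frac{1}{2\sqrt{C}} - 1 = \frac{2(1-y)}{y}$ pins down $\sqrt{C} = \frac{y}{2(2-y)}$; substituting and simplifying yields a closed form which — precisely because the piecewise definition of $f$ was crafted with this inductive step in mind — matches the envelope $f(x,c)$ piece by piece, tight on the right boundaries.

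For item \ref{enu:h}, observe the identity $h(x,c,a,\tilde{a}) = g(x, c - xy, a, \tilde{a}) + x - 3xy$ with $y = x+a$ (since replacing $c$ by $c - xy$ in $g$ shifts $C$ by $-xy/(1-y)$, which is exactly the adjustment made in $h$). Applying item \ref{enu:g} gives $h \le f(x, c - xy) + x - 3xy$, reducing item \ref{enu:h} to $f(x,c) - f(x, c-xy) \ge x - 3xy$. This is a one-variable statement about the increase of $f$ over a $c$-interval of length $xy$, which I would verify piecewise: in the linear piece the left side is $\bigl(\frac{1}{4(x-x^2)} - 1\bigr) xy$ and the bound follows easily; in the square-root piece it becomes $\sqrt{c} - \sqrt{c - xy} \ge x(1 - 2y)$, trivial for $y \ge 1/2$ and for $y < 1/2$ obtainable from concavity of $\sqrt{\cdot}$ together with $c \ge (x+\tilde{a})y \ge xy$; the top piece is immediate as both terms coincide.

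The main obstacle throughout is the bookkeeping of the $4 \times 4$ combinations of pieces in which $(x,c)$ and $(y,C)$ may lie in item \ref{enu:g}; many of these will collapse by monotonicity and concavity, but the square-root/square-root sub-case is where the precise shape of $f$ (linear on the first piece, square-root on the second, constant on the third) is delicate. The functions $f$, $g$, $h$ are set up so that each non-trivial sub-case is tight, which provides the right guide, but the verification will require careful elementary algebra and calculus rather than any new structural input.
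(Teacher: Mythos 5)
Your treatment of items \ref{enu:x=00003D0}, \ref{enu:concave} and \ref{enu:1-x} is correct and in the same spirit as the paper (the paper dismisses \ref{enu:x=00003D0} and \ref{enu:1-x} as easy, and for \ref{enu:concave} uses the same tangent-line observation you state). The substantial trouble is in your items \ref{enu:g} and \ref{enu:h}.

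\textbf{Item \ref{enu:h}: the reduction is false.} Your identity $h(x,c,a,\tilde a)=g(x,c-x(x+a),a,\tilde a)+x-3x(x+a)$ is correct, and the $\tilde a$-domains match; so Point \ref{enu:g} does give $h\le f(x,c-xy)+x-3xy$ with $y=x+a$. But the step you call the remaining ``one-variable statement,'' namely $f(x,c)-f(x,c-xy)\ge x(1-3y)$, is simply not true. Take $x=0.1$, $a=0.1$ (so $y=0.2$, $xy=0.02$) and $c=1$: both $c$ and $c-xy=0.98$ lie in the piece $c\ge\tfrac14$, so $f(x,c)-f(x,c-xy)=0$, while $x(1-3y)=0.1\cdot 0.4=0.04>0$. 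Your own check for the ``top piece'' (``both terms coincide'') is exactly where the claim fails; the coincidence of the two $f$-values makes the left side vanish, not the required inequality hold. This is not a fluke of the top piece either: in the linear piece with $x=a=0.1$ and $c=0.03$, one gets $f(x,c)-f(x,c-xy)=\bigl(\tfrac{1}{4(x-x^2)}-1\bigr)xy\approx 0.0356$ versus $x(1-3y)=0.04$, still failing. Meanwhile the original inequality $h\le f$ does hold at these points ($h\approx 1.012\le 1.06=f(0.1,1)$), so the problem is not with the lemma but with the reduction: applying Point \ref{enu:g} at the shifted argument $c-xy$ throws away too much. The paper does not derive Point \ref{enu:h} from Point \ref{enu:g}; it runs an entirely parallel case analysis for $h$, and I see no way around doing so.

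\textbf{Item \ref{enu:g}: the plan is under-specified and in places misdirected.} Your observation that $g$ is concave in $\tilde a$ is correct, but the paper's key point is stronger and more useful: $\partial g/\partial\tilde a=2(1-y)-y\,f_c(y,C)\ge 2(1-y)-\tfrac12>0$ for $y\le\tfrac12$ (and $\ge 0$ for $y\ge\tfrac12$), so $g$ is monotone increasing in $\tilde a$ and the maximum is always at the right endpoint $\tilde a=\min(a,c/(x+a))$; there is no interior critical point. Your critical-point equation $\tfrac{1}{2\sqrt C}-1=\tfrac{2(1-y)}{y}$ has no solution with $(y,C)$ in the square-root piece precisely because $f_c$ is bounded above by $\tfrac{1}{4(y-y^2)}-1$, which is strictly less than $\tfrac{2(1-y)}{y}$ for $y\le\tfrac12$; so that branch of your case analysis is vacuous, and spending the ``crafted to match the envelope'' simplification on it leads nowhere. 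Relatedly, the claim that after fixing $\tilde a$ at an endpoint ``the inequality reduces to a quadratic in $a$'' is not right: at $\tilde a=c/(x+a)$ the expression contains $c/(x+a)$, and in the linear piece $f(y,C)$ contains $1/(y-y^2)$, so in neither sub-case is the quantity polynomial in $a$ without a further bound (the paper gets quadratic-in-$a$ in its Case 2 only after replacing $c/(x+a)$ by the larger constant $\tfrac{-x+\sqrt{x^2+4c}}{2}$). Finally, for the genuine hard case (square-root piece, $\tilde a=a$) you offer only ``substituting and simplifying yields a closed form which matches the envelope''; this is where the paper spends most of the appendix, with a multi-level subcase analysis using the auxiliary inequality $f(x,c)\ge x^2-2x+1-c+\sqrt c$ on $[0,\tfrac12]\times[0,\tfrac14]$, and your sketch provides no substitute for that work.
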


We prove Lemma \ref{lem:functions} in Appendix \ref{sec:functions_proof}.

Rather that proving Lemma \ref{lem:lubell} directly, we prove a strengthening
of it \textendash{} Lemma \ref{lem:ind}. This strengthened version
involves additional parameters, $X$ and $\X$, and their functions
$x$, $\alpha$ and $\mu$, which we introduce in order to make the
inductive proof possible. Lemma \ref{lem:lubell} is a special case
of Lemma \ref{lem:ind} with $X=\X=\emptyset$. In the rest of Section
\ref{sec:invertedV}, we prove Lemma \ref{lem:ind}.

\begin{lem}
\label{lem:ind}Let $\F\subseteq2^{[n]}$ be a $\Lambda$-free family
which does not contain $\emptyset$, nor any set larger than $n-n'$
for some $n'\in\naturals$. Let us assume that we are given a ``forbidden''
set $X\subseteq[n]$, with $x=\frac{\left|X\right|}{n}$. Also, let
$\X\subset2^{[n]}$ be a ``forbidden'' antichain in which each set
contains exactly one element of $X$ (and may or may not be a singleton).
Let us assume that the sets in $\F$ are disjoint from $X$, and unrelated
to every set in $\X$. Let $\alpha=l(\X)$, and let $\mu n!$ be the
number of chains which start with an element of $X$ as a singleton,
but do not contain any set in $\X$. Assume, furthermore, that there
are $cn!$ MNM chains w.r.t.\ $\F$. Then $l(\F)\leq f\!\left(x,c+\mu+\frac{1}{n'}\right)-(\alpha-\mu-x)+\frac{3}{n'}$.
\end{lem}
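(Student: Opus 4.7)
My plan is to prove Lemma \ref{lem:ind} by induction on $n-|X|$, the size of the ``free'' part of the ground set; the base case $X=[n]$ is trivial, since $\F$ must then be empty. The engine of the inductive step is a rigidity consequence of $\Lambda$-freeness: for each $v\in[n]\setminus X$ whose singleton $\{v\}$ lies in $\F$, at most one other set of $\F$ strictly contains $\{v\}$ (two incomparable witnesses above $\{v\}$ would form a $\Lambda$ with $\{v\}$). Call this optional set $T_v$. A second application of $\Lambda$-freeness shows that if $v,u\in[n]\setminus X$ both have their singletons in $\F$ with $v\neq u$, then $T_v$ (if it exists) cannot contain $u$, for otherwise $\{v\},\{u\},T_v$ would form a $\Lambda$; in particular the $T_v$'s are pairwise incomparable.

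Let $S:=\{v\in[n]\setminus X:\{v\}\in\F\}$, $a:=|S|/n$, and let $\tilde a$ be a Lubell-type weight attached to the $T_v$'s, to be calibrated below so as to match the role of $\tilde a$ in Lemma \ref{lem:functions}. I would decompose $\F$ as the disjoint union of the singletons $\F_{1}:=\{\{v\}:v\in S\}$, the tops $\F_{\top}:=\{T_v:T_v\text{ exists}\}$, and a residual family $\F'$ consisting of the remaining sets of $\F$ (all of size at least two and disjoint from $S$). Then $\F'$ is still $\Lambda$-free, is disjoint from $X':=X\cup S$, and, after enlarging the forbidden antichain to $\X':=\X\cup\F_{\top}\cup\{\{v\}:v\in S,\,T_v\text{ does not exist}\}$, is unrelated to $\X'$; the structural observations above ensure that $\X'$ remains an antichain whose sets each contain exactly one element of $X'$. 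Applying the induction hypothesis to the smaller configuration $(\F',X',\X')$, with parameters $\alpha',\mu',c'$ expressed via $\alpha,\mu,c,a,\tilde a$, yields an upper bound on $l(\F')$; adding back the explicit Lubell contributions $l(\F_{1})=a$ and $l(\F_{\top})$ then gives an upper bound on $l(\F)$.

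After the cancellations that follow, the target inequality $l(\F)\leq f(x,c+\mu+1/n')-(\alpha-\mu-x)+3/n'$ reduces to one of the shape $g(x,c+\mu+1/n',a,\tilde a)\leq f(x,c+\mu+1/n')$ (or, in a complementary case, the analogous bound with $h$ in place of $g$), which is precisely the content of parts \ref{enu:g} and \ref{enu:h} of Lemma \ref{lem:functions}. The case distinction between $g$ and $h$ is expected to reflect how the chains starting from $S$ interact with the pre-existing antichain $\X$; specifically, whether the relevant MNM-chain count is best read off from chains that land in $\F_{\top}$ or from chains already ``used up'' by $\X$, which naturally produces the extra correction $x-3x(x+a)$ seen in $h$ compared with $g$.

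The main obstacle will be the precise bookkeeping of $c',\mu',\alpha'$: one must correctly track which chains that were MNM for $\F$ remain MNM for $\F'$ after the singletons and tops are stripped off, express $l(\X'\setminus\X)$ and the change in $\mu$ in terms of $a$ and $\tilde a$, and arrange the resulting arithmetic so that the factors $(1-x-a)$, the adjusted second argument $\tfrac{c-\tilde a(x+a)}{1-x-a}$ of $f$, and the cross-term $2\tilde a(1-x-a)$ all align with the shape of $g$ (or $h$). This accounting is also where the hypothesis ``no set larger than $n-n'$'' earns its keep, since it ensures that the $1/n'$ error terms propagate cleanly through the induction rather than compounding. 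Once this bookkeeping is set up correctly, the remaining analytic content is handled entirely by Lemma \ref{lem:functions}.
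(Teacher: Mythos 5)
Your proof has a fatal error at its very foundation: the structural claim that, for each $v\in[n]\setminus X$ with $\{v\}\in\F$, at most one set of $\F$ strictly contains $\{v\}$. Two sets $B_1,B_2\in\F$ both strictly containing $\{v\}$ form a $V$ (one bottom, two tops), not a $\Lambda$; recall from the paper's definition that $\Lambda$ is the shape with two bottoms below a common top ($B,C\subset D$). Since $\F$ is only assumed $\Lambda$-free, not $V$-free, the family $\{\{1\},\{1,2\},\{1,3\},\{1,4\}\}$ is a perfectly good counterexample: it is $\Lambda$-free, yet $\{1\}$ has three strict supersets in the family. Consequently ``$T_v$'' is not well defined, the set $\F_\top$ of ``tops'' is not a sensible object, and the residual family $\F'$ need not be disjoint from $S$ (a set $B\ni v$ with $v\in S$ may survive into $\F'$), so the enlarged instance $(\F',X',\X')$ does not satisfy the hypotheses of the lemma. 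The correct consequence of $\Lambda$-freeness here is weaker and is what the paper actually uses (Proposition \ref{prop:Bform}): each non-singleton $B\in\F$ that meets $A$ contains exactly one element of $A$ and is unrelated to every other set of $\F$, but the family $\B$ of such sets is in general a whole antichain, not a single ``top'' per singleton. What is correct in your observation is the secondary point that $T_v$ cannot contain a second element of $S$; that part does survive.

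There is also a more structural mismatch with the intended inequality. Your induction keeps the ambient lattice $2^{[n]}$ fixed and only enlarges $X$, so the inductive hypothesis applied to $\F'$ would yield a bound of the form $f(x+a,\cdot)-(\cdot)$ with no $(1-x-a)$ prefactor, whereas the target reduction to Lemma~\ref{lem:functions} Point~\ref{enu:g}~or~\ref{enu:h} requires exactly the shape $a+(1-x-a)f\bigl(x+a,\tfrac{\,\cdot\,}{1-x-a}\bigr)+\dots$. The paper produces that prefactor by genuinely descending in dimension: it restricts $\F$ to the sublattices $\bigl[\{o_i\},[n]\bigr]\cong 2^{[n-1]}$ for each $o_i\in[n]\setminus X\setminus A$, applies the lemma there with ambient parameter $n-1$, and then averages via Proposition~\ref{prop:lubell_induction} and Jensen; the factor $(1-x-a)$ is the proportion of starting elements $o_i$ and the division by $(1-x-a)$ inside $f$ is the renormalization of chain counts in the smaller lattices. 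Without such a dimension reduction it is not clear how your arithmetic would ever match $g$ or $h$, even after fixing the $T_v$ issue.
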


First we verify the base case of the induction.
\begin{prop}
\label{prop:indbase}Lemma \ref{lem:ind} holds for $n\leq n'$.
\end{prop}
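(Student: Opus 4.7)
The plan is to exploit the strong restriction imposed by $n\le n'$. Every set in $\F$ has size at most $n-n'\le 0$, and $\emptyset\notin\F$ by hypothesis, so $\F$ must be empty and $l(\F)=0$. The claim therefore reduces to verifying the purely numerical inequality
\[
0 \le f\!\left(x,\, c+\mu+\tfrac{1}{n'}\right)-(\alpha-\mu-x)+\tfrac{3}{n'}.
\]

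To prove this I would invoke Lemma \ref{lem:functions}.\ref{enu:1-x}, which gives $f(x,c)\ge 1-x$ for every $c\ge 0$, together with the fact that $\X$ is an antichain by hypothesis, so that the YMBL inequality yields $\alpha=l(\X)\le 1$. Combining these,
\[
f\!\left(x,\, c+\mu+\tfrac{1}{n'}\right)-(\alpha-\mu-x)+\tfrac{3}{n'}\;\ge\;(1-x)-\alpha+\mu+x+\tfrac{3}{n'}\;=\;1-\alpha+\mu+\tfrac{3}{n'}\;\ge\;0,
\]
where the final step also uses $\mu\ge 0$.

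There is essentially no obstacle here: the base case is vacuous on the side of $\F$, and the numerical inequality is immediate from the two ingredients above. The only things to be careful about are (i) reading the size restriction as "size bigger than $n-n'$" (matching the phrasing in Lemma \ref{lem:lubell}), which is what forces $\F=\emptyset$ when $n\le n'$, and (ii) actually using the antichain hypothesis on $\X$ so that YMBL applies. The point of the proposition is simply to anchor the induction that occupies the rest of Section \ref{sec:invertedV}.
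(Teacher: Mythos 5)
Your proof is correct and takes essentially the same approach as the paper: observe $\F=\emptyset$, bound $\alpha\le 1$ via the YMBL inequality, and apply Lemma~\ref{lem:functions} Point~\ref{enu:1-x} to get $f\ge 1-x$. The paper additionally invokes Point~\ref{enu:concave} to first drop the $1/n'$ term inside $f$, but since you apply the $1-x$ lower bound directly this is an immaterial difference.
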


\begin{proof}
$\F=\emptyset$. $\X$ is an antichain, so, by the YMBL inequality,
$\alpha\leq1$. By Lemma \ref{lem:functions} Point \ref{enu:concave}\ and
Point \ref{enu:1-x}, $f\!\left(x,c+\mu+\frac{1}{n'}\right)-(\alpha-\mu-x)+\frac{3}{n'}\geq f(x,c+\mu)-(\alpha-\mu-x)\geq1-x-(\alpha-x)\geq0=l(\F)$.
\end{proof}
From now on we assume $n'\leq n-1$.
\begin{nameddef}[Notation]
Let $A\subseteq[n]\setminus X$ be the set of elements of $[n]$
that appear as singletons in $\F$, and let $a=\frac{\left|A\right|}{n}$.
Let $\B$ be the family of those sets in $\F$ which contain at least
one element of $A$, but which are not singletons. Let $\beta$ be
the Lubell function of $\B$. Let $\mathcal{C}$ be the family of
those sets in $\F$ which only contain elements of $[n]\setminus X\setminus A$.

Let $\tilde{A}=\bigl\{ e\in A:(\exists B\in\B:e\in B)\bigr\}$, and
let $\tilde{a}=\frac{\left|\tilde{A}\right|}{n}$. Let $c_{0}n!$
be the number of chains that start with $\{e\}$ as a singleton for
some $e\in\tilde{A}$, but do not contain any set from $\B$. Let
$\nu n!$ be the number of chains that start with $\{e\}$ as a singleton
for some $e\in\tilde{A}$, continue with an element of $[n]\setminus X\setminus A$
as the second element added to form the chain, yet do not contain
any set from $\B$.

\global\long\def\One{\overline{1}}
\global\long\def\one{\underline{1}}
Let $\One=\frac{n}{n-1}>1$ and $\one=\frac{(x+a)n-1}{(x+a)(n-1)}\leq1$.
These correction factors will account for the difference from the
asymptotic behavior. (They are both typically close to 1. If $x+a=0$,
let $\one=1$; it is irrelevant as it will always be multiplied by
$x+a$.)%
\end{nameddef}

\begin{table}
\caption{Summary of notation}
\centering{}%
\begin{tabular}{cll|l}
$X$ & \multicolumn{2}{>{\raggedright}p{0.7\columnwidth}|}{``Forbidden'' set (sets in $\F$ are disjoint from it \textendash{}
parameter of Lemma \ref{lem:ind})} & $x=\left|X\right|/n$\tabularnewline
$\X$ & \multicolumn{2}{>{\raggedright}p{0.7\columnwidth}|}{``Forbidden'' antichain (sets in $\F$ are unrelated to sets in
it \textendash{} parameter of Lemma \ref{lem:ind})} & $\alpha=l(\X)$\tabularnewline
$\mu$ & \multicolumn{2}{>{\raggedright}p{0.7\columnwidth}}{$\#\left\{ \text{chains containing \ensuremath{\{d\}} for \ensuremath{d\in X} but no set from \ensuremath{\X}}\right\} /n!$} & \tabularnewline
$c$ & \multicolumn{2}{>{\raggedright}p{0.7\columnwidth}}{$\#\{\text{MNM chains}\}/n!$ (parameter of Lemma \ref{lem:lubell}
/ Lemma \ref{lem:ind})} & \tabularnewline
\hline 
$A$ & $\text{\ensuremath{\left\{ e\in[n]:\{e\}\in\F\right\} }}$ & \multirow{3}{*}{$\left.\vphantom{\mbox{\begin{tabular}{c}
 \ensuremath{\left\{  \right\} } \\
 \ensuremath{\left\{  \right\} } \\
 \ensuremath{\left\{  \right\} } 
\end{tabular}}}\right\} \left\{ \{e\}:e\in A\right\} \cup\B\cup\C=\F$} & $a=\left|A\right|/n$\tabularnewline
$\B$ & $\left\{ B\in\F:(\left|B\right|\geq2,A\cap B\neq\emptyset)\right\} $ &  & $\beta=l(\B)$\tabularnewline
$\C$ & $\left\{ C\in\F:C\subseteq[n]\setminus X\setminus A\right\} $ &  & \tabularnewline
$\tilde{A}$ & \multicolumn{2}{>{\raggedright}p{0.7\columnwidth}|}{$\bigl\{ e\in A:(\exists B\in\B:e\in B)\bigr\}$} & $\tilde{a}=\bigl|\tilde{A}\bigr|/n$\tabularnewline
$\nu$ & \multicolumn{2}{>{\raggedright}p{0.7\columnwidth}}{$\#\bigl\{\text{chains containing }\ensuremath{\{e\}}\text{ and }\ensuremath{\{e,o\}}\text{ for }\ensuremath{e\in\tilde{A},o\in[n]\setminus X\setminus A}\text{ but no set from }\B\bigr\}/n!$} & \tabularnewline
$c_{0}$ & \multicolumn{2}{>{\raggedright}p{0.7\columnwidth}}{$\#\bigl\{\text{chains containing \ensuremath{\{e\}} for \ensuremath{e\in\tilde{A}} but no set from \ensuremath{\B}}\bigr\}/n!$} & \tabularnewline
\end{tabular}
\end{table}

\emph{Outline of the proof:} In Subsection \ref{subsec:On-the-structure},
we make some observations on the structure of $\X$ and $\B$. In
Subsection \ref{subsec:Induction}, we will finish the proof by applying
induction to the Boolean lattices $\bigl[\{o_{i}\},[n]\bigr]$ where
$o_{i}\in[n]\setminus X\setminus A$. When applying Lemma \ref{lem:ind}
by induction, we will use $X\cup A$ in the place of $X$, while sets
from $\X$ and $\B$ will contribute to the family we use in the place
of $\X$ (which we will denote by $\X'_{i}$). We know little about
the parameters of each $\X'_{i}$, but we will be able to bound their
sums. The relevant calculations are done in Subsection \ref{subsec:Chain-calculations}.

\subsection{\label{subsec:On-the-structure}On the structure of $\protect\X$
and $\protect\B$}
\begin{prop}
\label{prop:Xform}Every $D\in\X$ is of the form $\{d,o_{1},\ldots,o_{k}\}$
with $d\in X,o_{1},\ldots,o_{k}\in[n]\setminus X\setminus A$ (where
$k$ may be 0).
\end{prop}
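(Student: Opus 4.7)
The statement is a direct unpacking of the definitions, so the plan is essentially to verify each component of the claimed form.

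By the hypothesis of Lemma \ref{lem:ind} on $\X$, every $D \in \X$ contains exactly one element of $X$; call this element $d$, so $d \in X$. All remaining elements of $D$ therefore lie in $[n] \setminus X$. It remains to show that none of these remaining elements lies in $A$, i.e.\ $D \cap A = \emptyset$.

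Suppose for contradiction there exists $e \in D \cap A$. By the definition of $A$ we have $\{e\} \in \F$. Since $e \in A \subseteq [n] \setminus X$ while $d \in X$, we have $d \neq e$, hence $\{e\} \subsetneq D$, so $\{e\}$ and $D$ are related. But the hypothesis of Lemma \ref{lem:ind} states that every set in $\F$ is unrelated to every set in $\X$, giving a contradiction. Thus the remaining elements all lie in $[n] \setminus X \setminus A$, and $D$ has the claimed form $\{d, o_1, \ldots, o_k\}$ (with $k = 0$ allowed, corresponding to the singleton case).

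There is no real obstacle here; the proposition is a short bookkeeping step, preparing the ground for the structural observations about $\B$ and the induction in Subsection \ref{subsec:Induction}.
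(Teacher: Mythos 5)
Your proof is correct and follows the same route as the paper's: both invoke the hypothesis that every set in $\X$ contains exactly one element of $X$, and both observe that $e \in D \cap A$ would make $\{e\} \in \F$ related to $D \in \X$, contradicting the unrelatedness condition of Lemma \ref{lem:ind}. Your slightly more explicit check that $\{e\} \subsetneq D$ (via $d \neq e$) is a harmless expansion of a step the paper leaves implicit.
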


\begin{proof}
$D$ contains exactly one element of $X$ by definition. Let $e\in A$;
then $e\notin D$ for otherwise $D$ and $\{e\}\in\F$ would be related.
\end{proof}
\begin{prop}
\label{prop:Bform}Sets in $\B$ only contain one element of $A$.
$\B$ is an antichain, and the sets in $\B$ are also unrelated to
every set in $\C$.
\end{prop}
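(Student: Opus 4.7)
The plan is to reduce all three conclusions to a single observation: the $\Lambda$ definition only requires three \emph{distinct} sets $B, C, D \in \mathcal{F}$ with $B, C \subset D$, \emph{without} requiring $B$ and $C$ to be incomparable. Consequently, any strict three-term chain $S_1 \subsetneq S_2 \subsetneq S_3$ inside $\mathcal{F}$ already forms a $\Lambda$ (with $S_1, S_2$ as the two lower elements and $S_3$ as the top), and more generally any two distinct proper subsets in $\mathcal{F}$ of a common $D \in \mathcal{F}$ are forbidden. I will combine this with the two structural facts baked into the definitions: every $B \in \mathcal{B}$ has $|B| \geq 2$ and meets $A$, while every $C \in \mathcal{C}$ is disjoint from $A$.

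For the first claim, I would suppose, for contradiction, that some $B \in \mathcal{B}$ contained two distinct elements $e_1, e_2 \in A$. Then $\{e_1\}, \{e_2\} \in \mathcal{F}$ are distinct singletons, both strictly contained in $B$ (strictness holds because $|B| \geq 2$), and this triple is a $\Lambda$.

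For the antichain claim, I would suppose $B_1 \subsetneq B_2$ with both in $\mathcal{B}$, and pick any $e \in A \cap B_1$. Then $\{e\} \in \mathcal{F}$, and $\{e\} \neq B_1$ since $|B_1| \geq 2$, so $\{e\}$ and $B_1$ are two distinct proper subsets of $B_2$ inside $\mathcal{F}$ — a $\Lambda$. For the unrelatedness claim, let $B \in \mathcal{B}$ and $C \in \mathcal{C}$ and pick $e \in A \cap B$; since $C \cap A = \emptyset$ we have $e \notin C$, which immediately rules out $B \subseteq C$. If instead $C \subsetneq B$, then $\{e\}$ and $C$ are distinct members of $\mathcal{F}$ (distinct because $e \notin C$), both proper subsets of $B$, again forming a $\Lambda$. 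Hence $B$ and $C$ are incomparable.

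There is no real obstacle here; the one conceptual point to get right is the interpretation of the $\Lambda$ poset (that its two minimal elements need not be incomparable in $\mathcal{F}$), which is exactly what lets the antichain and unrelatedness claims be handled by inserting a singleton $\{e\}$ as the ``second prong'' of the $\Lambda$. Everything else is bookkeeping that uses only $|B| \geq 2$ and $A \cap \mathcal{C} = \emptyset$.
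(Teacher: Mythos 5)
Your proof is correct and follows essentially the same argument as the paper: both hinge on the observation that for $B\in\B$ and a comparable $S\in\F$, the singleton $\{e\}$ with $e\in A\cap B$ supplies the third set of a $\Lambda$ (exploiting that the definition of $\Lambda$ does not require its two lower elements to be incomparable). The paper simply states this once and applies it uniformly to $S\in\B\cup\C$ and $S=\{e_2\}$, whereas you spell out the three cases separately.
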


\begin{proof}
If $e_{1}\in B\in\B$ with $e_{1}\in A$, and B was related to another
set $S\in\F$, then $\{e_{1}\}$, $B$ and $S$ would form a $\Lambda$.
This applies to any $S\in\B\cup\C$, as well as $S=\{e_{2}\}$ for
any $e_{1}\neq e_{2}\in A$.
\end{proof}
\begin{prop}
\label{prop:cbound}$\tilde{a}(x+a)\one\leq\tilde{a}(x+a)\one+\nu=c_{0}\leq c$,
and thus $\tilde{a}\leq\frac{c}{(x+a)\one}$.
\end{prop}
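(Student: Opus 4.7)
The plan is to prove the claim in three pieces. The leftmost inequality follows immediately from $\nu\geq 0$. For the central equality $\tilde{a}(x+a)\one+\nu=c_0$, I would partition the chains counted by $c_0$ according to the second element added. Chains whose second element lies in $[n]\setminus X\setminus A$ are, by definition, exactly the $\nu n!$ chains counted by $\nu$. For the remaining chains, which start with $\{e\}\subset\{e,d\}$ for some $e\in\tilde{A}$ and $d\in(X\cup A)\setminus\{e\}$, I would show that the condition ``contains no set from $\B$'' is automatic: if some $B\in\B$ contained $\{e,d\}$, then by Proposition \ref{prop:Bform} the set $B$ contains exactly one element of $A$, which must be $e$, forcing $d\in X$; but $B\in\F$ is disjoint from $X$, a contradiction. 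Counting these chains by choosing $e\in\tilde{A}$, then $d$ from the $(x+a)n-1$ admissible elements, then permuting the remaining $n-2$ elements, yields $\tilde{a}n\cdot((x+a)n-1)\cdot(n-2)!$ chains; dividing by $n!$ and using $(x+a)n-1=(x+a)(n-1)\one$ gives exactly $\tilde{a}(x+a)\one$, establishing the equality.

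For the rightmost inequality $c_0\leq c$, the plan is to show every chain contributing to $c_0$ is MNM with respect to $\F$. Such a chain contains $\{e\}\in\F$, and since $e\in\tilde{A}$ there is some $B\in\B\subseteq\F$ with $e\in B$, so $\{e\}$ is not maximal in $\F$. It remains to verify that $\{e\}$ is in fact the largest set of $\F$ on this chain. Every set of size at least $2$ on the chain contains $e\in A$, which rules out membership in $\C$ (whose sets lie in $[n]\setminus X\setminus A$), and membership in $\B$ is excluded by the defining condition of $c_0$. Hence the biggest set of $\F$ on the chain is the non-maximal set $\{e\}$, so the chain is MNM, giving $c_0\leq c$. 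The final consequence $\tilde{a}\leq c/((x+a)\one)$ is obtained by dividing (and is vacuous when $x+a=0$).

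The argument is essentially bookkeeping; the only genuinely non-trivial step is the verification that chains with second element in $(X\cup A)\setminus\{e\}$ cannot pass through any set of $\B$, which combines Proposition \ref{prop:Bform} with the fact that $\F$ is disjoint from $X$. The correction factor $\one$ emerges naturally from rewriting $(x+a)n-1$ in terms of $(n-1)$, which is exactly what the definition of $\one$ was designed to capture.
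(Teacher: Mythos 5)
Your proof is correct and follows essentially the same route as the paper: count the $c_0$-chains by their second element ($\tilde{a}(x+a)\one n!$ chains with second element in $X\cup A$, which automatically avoid $\B$ because such a set would violate Proposition \ref{prop:Bform} or the disjointness of $\F$ from $X$; plus $\nu n!$ chains by definition), and then observe that every $c_0$-chain is MNM since $\{e\}$ is its largest $\F$-set yet is contained in some $B\in\B$. Your write-up is slightly more careful than the paper's in separating the equality $c_0=\tilde{a}(x+a)\one+\nu$ from the inequality $c_0\leq c$, and in spelling out why no set of $\C$ can appear on such a chain, but the underlying argument is identical.
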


\begin{proof}
Any chain on which the singleton is $\{e\}$ and the second set is
$\{e,d\}$ with $e\in\tilde{A}$ and $d\in X\cup A$ is always an
MNM chain: $\{e,d\}$ and any set that contains it is forbidden from
being in $\B$ either because it is not disjoint from $X$ (when $d\in X$),
or because it would contain two elements of $A$ (when $d\in A$).
The number of such chains is $\tilde{a}n\cdot(an+xn-1)\cdot(n-2)!=\tilde{a}(x+a)n!\one$.
And out of the chains which start with $\{e\}$, and whose second
set is $\{e,o\}$ with some $o\in[n]\setminus X\setminus A$, $\nu n!$
do not contain any set from $\B$.

We have $c_{0}\leq c$ because a chain whose first set is $\{e\}$
for some $e\in\tilde{A}$, but does not contain any set from $\B$,
is an MNM chain.
\end{proof}
For a family of sets $\A\subseteq2^{[n]}$, let $m(\A)n!$ be the
number of chains which start with an element of $X$ as a singleton
and do not contain any set from $\A$. (For example, $m(\X)=\mu$,
and therefore $l(\X)-m(\X)=\alpha-\mu.)$ For a fixed $d\in X$, let
$m_{d}(\A)n!$ be the number of chains on which the singleton is $\{d\}$,
and do not contain any element of $\A$.
\begin{prop}
\label{prop:uniform}For any $d\in X$, let $\X_{d}=\{D\in\X:d\in D\}$.
We can assume without loss of generality that for any $d_{1},d_{2}\in X$,
$\left\{ D\setminus\left\{ d_{1}\right\} :D\in\X_{d_{1}}\right\} =\left\{ D\setminus\left\{ d_{2}\right\} :D\in\X_{d_{2}}\right\} $.
That is, if $\X$ does not satisfy this condition, we show a family
$\hat{\X}$ which does, and also satisfies the conditions of Lemma
\ref{lem:ind}'s statement (each set contains exactly one element
of $X$, the sets are unrelated to each other and to every set in
$\F$), and for which $f\!\left(x,c+m(\hat{\X})+\frac{1}{n'}\right)-\bigl(l(\hat{\X})-m(\hat{\X})-x\bigr)\leq f\!\left(x,c+\mu+\frac{1}{n'}\right)-(\alpha-\mu-x)$.
\end{prop}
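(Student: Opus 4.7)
The plan is a symmetrization argument: replace $\X$ by an antichain that has the same ``shape above $d$'' for every $d \in X$, where the common shape is $\mathcal{S}_{d^*} := \{D \setminus \{d^*\} : D \in \X_{d^*}\}$ for an appropriately chosen $d^* \in X$. For each $d \in X$ let $\X_d = \{D \in \X : d \in D\}$ and $\mathcal{S}_d = \{D \setminus \{d\} : D \in \X_d\}$; by Proposition \ref{prop:Xform} each $\mathcal{S}_d$ is an antichain inside $2^{[n] \setminus X \setminus A}$. For each candidate $d^* \in X$, set
\[
\hat{\X}^{(d^*)} := \bigl\{\{d\} \cup S : d \in X,\ S \in \mathcal{S}_{d^*}\bigr\}.
\]

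First I would verify the structural hypotheses of Lemma \ref{lem:ind} for $\hat{\X}^{(d^*)}$. Every set $\{d\} \cup S$ contains $d$ as its unique element of $X$ since $S \cap X = \emptyset$. The family is an antichain: two sets with the same $d$ are unrelated because $\mathcal{S}_{d^*}$ is an antichain (as a subfamily of the antichain $\X_{d^*} \subseteq \X$), and two sets with distinct $d_1 \ne d_2 \in X$ are unrelated because each carries a different unique $X$-element. Finally, each $F \in \F$ is unrelated to every $\{d\} \cup S \in \hat{\X}^{(d^*)}$: the containment $\{d\} \cup S \subseteq F$ fails because $d \in X$ and $F \cap X = \emptyset$, while $F \subseteq \{d\} \cup S$ reduces (again by $F \cap X = \emptyset$) to $F \subseteq S$, which is ruled out since $\{d^*\} \cup S \in \X$ is by hypothesis unrelated to $F$.

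Next I would decompose $\alpha$ and $\mu$ over $d \in X$. Setting
\[
a_d := \sum_{D \in \X_d} \binom{n}{|D|}^{-1}, \qquad m_d := \tfrac{1}{n!}\, \#\bigl\{\text{chains starting with }\{d\}\text{ that avoid }\X\bigr\},
\]
we get $\alpha = \sum_d a_d$ trivially (each $D \in \X$ lies in a unique $\X_d$), and $\mu = \sum_d m_d$ because every chain starting with an element of $X$ starts with exactly one $\{d\}$. Since a chain starting with $\{d\}$ cannot contain any element of $\X \setminus \X_d$ (these would carry a different $X$-element), $m_d$ depends only on the shape $\mathcal{S}_d$, and by symmetry of the ground set $[n] \setminus X \setminus A$ equal shapes at different $d$'s yield the same count. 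Consequently
\[
l(\hat{\X}^{(d^*)}) = |X| \cdot a_{d^*}, \qquad m(\hat{\X}^{(d^*)}) = |X| \cdot m_{d^*}.
\]

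Finally, the inequality to be proved rearranges to $g(l(\hat{\X}^{(d^*)}), m(\hat{\X}^{(d^*)})) \le g(\alpha, \mu)$, where $g(a, m) := f(x, c + m + 1/n') + m - a$. Averaging $g(|X| a_{d^*}, |X| m_{d^*})$ uniformly over $d^* \in X$, the linear part contributes exactly $\mu - \alpha$, and Jensen's inequality applied to the concave function $f(x, \cdot)$ (Lemma \ref{lem:functions}(\ref{enu:concave})) gives
\[
\frac{1}{|X|} \sum_{d^* \in X} f\!\left(x,\ c + |X| m_{d^*} + \tfrac{1}{n'}\right) \le f\!\left(x,\ c + \mu + \tfrac{1}{n'}\right).
\]
Therefore the mean of $g(|X| a_{d^*}, |X| m_{d^*})$ over $d^* \in X$ is at most $g(\alpha, \mu)$, so some $d^* \in X$ attains this bound; choosing that $d^*$ and setting $\hat{\X} := \hat{\X}^{(d^*)}$ proves the proposition. (The case $X = \emptyset$ forces $\X = \emptyset$ and the claim is vacuous with $\hat{\X} = \emptyset$.) I anticipate no serious obstacle: the only slightly delicate step is the transfer of the structural hypotheses to $\hat{\X}^{(d^*)}$, and the quantitative bound reduces to a direct Jensen application on $f(x,\cdot)$.
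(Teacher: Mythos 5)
Your proof is correct and follows essentially the same route as the paper's: symmetrize by replicating the shape $\mathcal{S}_{d^*}$ across all elements of $X$, decompose $\alpha=\sum_d a_d$ and $\mu=\sum_d m_d$, apply Jensen via the concavity of $f(x,\cdot)$, and pick the optimal $d^*$. The paper simply chooses the minimizing $d_0$ up front rather than phrasing it as an average over candidates, but the argument and the verification of the structural hypotheses for $\hat{\X}$ are the same.
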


\begin{proof}
Let $d_{0}\in X$ be such that 
\begin{gather*}
f\!\left(x,c+\left|X\right|m_{d_{0}}\!\left(\X_{d_{0}}\right)+\frac{1}{n'}\right)-\bigl(\left|X\right|l\!\left(\X_{d_{0}}\right)-\left|X\right|m_{d_{0}}\!\left(\X_{d_{0}}\right)-x\bigr)\\
=\min_{d\in X}\!\left[f\!\left(x,c+\left|X\right|m_{d}\!\left(\X_{d}\right)+\frac{1}{n'}\right)-\bigl(\left|X\right|l\!\left(\X_{d}\right)-\left|X\right|m_{d}\!\left(\X_{d}\right)-x\bigr)\right].
\end{gather*}
 Let $\hat{\X}=\bigl\{ D\setminus\{d_{0}\}\cup\{d\}:d\in X,D\in\X_{d_{0}}\bigr\}$.

$\X=\bigsqcup_{d\in X}\X_{d}$, so $\alpha=\sum_{d\in X}l(\X_{d})$.
It immediately follows from the definition of $\X_{d}$ that if a
chain has $\{d\}$ as a singleton, and does not contain any set from
$\X_{d}$, then it does not contain any set from $\X$. So $\mu=\sum_{d\in X}m_{d}(\X_{d})$.
Similarly, $l(\hat{\X})=\left|X\right|l\!\left(\X_{d_{0}}\right)$
and $m(\hat{\X})=\left|X\right|m_{d_{0}}\!\left(\X_{d_{0}}\right)$.
Since $f(x,c)$ is monotonously increasing and concave in $c$, using
Jensen's inequality 
\begin{gather*}
f\!\left(x,c+\left|X\right|m_{d_{0}}\!\left(\X_{d_{0}}\right)+\frac{1}{n'}\right)-\bigl(\left|X\right|l\!\left(\X_{d_{0}}\right)-\left|X\right|m_{d_{0}}\!\left(\X_{d_{0}}\right)-x\bigr)\\
\leq\frac{1}{\left|X\right|}\left[\sum_{d\in X}f\!\left(x,c+\left|X\right|m_{d}\!\left(\X_{d}\right)+\frac{1}{n'}\right)-\bigl(\left|X\right|\sum_{d\in X}l\!\left(\X_{d}\right)-\left|X\right|\sum_{d\in X}m_{d}\!\left(\X_{d}\right)-\left|X\right|x\bigr)\right]\\
\leq f\!\left(x,c+\mu+\frac{1}{n'}\right)-(\alpha-\mu-x).
\end{gather*}

Sets in $\hat{\X}$ contain exactly one element of $X$, and form
an antichain. They are also unrelated to every set $S\in\F$: $S$
cannot contain any element of $X$, so it could only be related to
a set in $\hat{\X}$ by being its subset. But $S$ must also be unrelated
to every $D\in\X_{d_{0}}\subseteq\X$, so it cannot be a subset of
$D\setminus\{d_{0}\}\cup\{d\}$ either.
\end{proof}
In fact we will only use the following simple corollary of Proposition
\ref{prop:uniform}. In many parts of the rest of this section we
will treat the two cases of the corollary below separately.
\begin{cor}
With the assumption of Proposition \ref{prop:uniform},

\begin{itemize}
\item either $\X=\bigl\{\{d\}:d\in X\bigr\}$ (we refer to it as the \textbf{singletons
case}),
\item or $\X$ does not contain any singleton (referred to as the \textbf{no
singleton case}).
\end{itemize}
\end{cor}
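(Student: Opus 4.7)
The plan is to derive this corollary as an essentially immediate consequence of the uniform structure guaranteed by Proposition \ref{prop:uniform}. After (possibly) replacing $\X$ by the family $\hat{\X}$ constructed there, we may assume that the trace collection $\mathcal{Y} := \{D \setminus \{d\} : D \in \X_d\} \subseteq 2^{[n]\setminus X}$ is the same set system for every choice of $d \in X$. The corollary then falls out of a dichotomy on whether $\emptyset$ lies in $\mathcal{Y}$.

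First I would handle the case $\emptyset \in \mathcal{Y}$. By the uniformity, this means $\{d\} \in \X_d$ for every $d \in X$. Since $\X$ is an antichain (from the hypothesis of Lemma \ref{lem:ind}), no other set of $\X_d$ can contain $d$ — any such set would be a proper superset of $\{d\}$, contradicting antichainness. Hence $\X_d = \{\{d\}\}$ for every $d$, and taking the disjoint union yields $\X = \{\{d\} : d \in X\}$, the singletons case. In the complementary case $\emptyset \notin \mathcal{Y}$, each $D \in \X_d$ satisfies $D \setminus \{d\} \neq \emptyset$, so $\left|D\right| \geq 2$; hence $\X$ contains no singleton.

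There is essentially no obstacle here: all the heavy lifting has already been done in Proposition \ref{prop:uniform}, which packages the averaging and the verification that $\hat{\X}$ still satisfies the hypotheses of Lemma \ref{lem:ind}. The corollary itself is a clean two-line case analysis on whether the common trace $\mathcal{Y}$ contains the empty set, with the antichain property being what rules out any ``mixed'' configuration.
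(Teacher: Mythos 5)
Your proof is correct and is essentially the same argument as the paper's: both use the antichain property of $\X$ to conclude that if any $\X_d$ contains $\{d\}$ then $\X_d=\{\{d\}\}$, and then appeal to Proposition \ref{prop:uniform} to extend the dichotomy uniformly over all $d\in X$. The only cosmetic difference is that you phrase the case split in terms of whether $\emptyset$ lies in the common trace family, whereas the paper fixes a single $d_1\in X$ and asks whether $\{d_1\}\in\X_{d_1}$; you may also want to note, as the paper does, that the degenerate case $X=\emptyset$ makes both alternatives hold vacuously.
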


\begin{proof}
Let $d_{1}\in X$. (If $X=\emptyset$, both statements trivially hold.)
If $\{d_{1}\}\in\X_{d_{1}}=\{D\in\X:d_{1}\in D\}$, then $\X_{d_{1}}=\left\{ \{d_{1}\}\right\} $,
because sets in $\X_{d_{1}}$ are unrelated. So either $\X_{d_{1}}=\left\{ \{d_{1}\}\right\} $
or $\X_{d_{1}}$ does not contain any singleton, yielding the two
cases above by Proposition \ref{prop:uniform}.
\end{proof}
\begin{rem*}
The fact that sets in $\X$ contain an element of $X$ implies that
sets in $\F$ do not contain sets in $\X$. Now, let us consider what
restrictions are imposed on $\F$ by the fact that sets in $\F$ are
not contained in the sets in $\X$, beyond the other conditions of
Lemma \ref{lem:ind} (namely that all the sets in $\F$ are disjoint
from $X$).

In the singletons case, clearly there are no such additional restrictions.
However, in the no singleton case, there are two additional restrictions
that are not already implied by the set $X$:

\begin{itemize}
\item The union of singletons in $\F$, $A\subseteq[n]\setminus\bigcup\X$.
\item Sets in $\C$ must not be contained in sets in $\X$. Clearly this
imposes a restriction only if $\X$ contains sets bigger than 2.
\end{itemize}
\end{rem*}
\begin{example}
\label{exa:nosingleton}Let $C\subseteq[n]\setminus X$, and let $\X=\bigl\{\{d,o\}:d\in X,o\in C\bigr\}$.
Then $\alpha=l(\X)=\frac{2\cdot xn\cdot\left|C\right|\cdot(n-2)!}{n!}=2x\frac{\left|C\right|}{n}\One$,
and $\mu=\frac{xn\cdot(xn+an-1)\cdot(n-2)!}{n!}=x(x+a)\one$. The
only restriction on $\F$ that this $\X$ creates is that the union
of singletons $A\subseteq[n]\setminus X\setminus C$.

In other words, let us assume that $\alpha=l(\X)=2x\gamma\One$ for
$\gamma\in\reals$ (without assuming that $\X$ is of the above form).
Then it is possible that $a=\frac{\left|A\right|}{n}$ can be as big
as $1-x-\gamma$ with $\X$ not creating any restrictions on $\C$
(depending on the actual structure of $\X$, namely, if it is made
up of sets of size $2$ as above; then $\alpha=2x(1-x-a)\One$ and
$\mu=x(x+a)\one$). But if $a>1-x-\gamma$, then $\alpha=2x\gamma\One$
implies that $\X$ contains sets bigger than 2, and thus it creates
restrictions on $\C$. So, in the no singleton case, one way to understand
the calculations that follow is to check them for $\X=\bigl\{\{d,o\}:d\in X,o\in C\bigr\}$;
then check what happens if $x$, $c$ and $a$ are fixed, but $\X$
is changed.
\end{example}

\subsection{\label{subsec:Chain-calculations}Chain calculations}

Now we estimate the numbers of certain types of chains, in preparation
for applying induction.
\begin{prop}
\label{prop:Xchainbound}In the no singleton case, $\left(\alpha-x+\mu\right)n!$
chains start with $\{o\}$ for some $o\in[n]\setminus X\setminus A$,
and contain a set from $\X$.
\end{prop}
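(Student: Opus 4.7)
The plan is to count chains containing a set from $\X$ by sorting them according to which element sits in the starting singleton, using the YMBL/Lubell interpretation that, because $\X$ is an antichain, each chain contains at most one set from $\X$ and the total number of chains containing some set from $\X$ equals exactly $\alpha n!$.

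First I would invoke Proposition~\ref{prop:Xform} to record the structural fact that every $D \in \X$ has the shape $\{d, o_1, \ldots, o_k\}$ with $d \in X$ and $o_i \in [n]\setminus X\setminus A$. Since a chain containing $D$ must list the elements of $D$ before anything else (in some order), its starting singleton is either $\{d\}$ (with $d \in X$) or $\{o_i\}$ (with $o_i \in [n]\setminus X\setminus A$). In particular, no chain containing a set from $\X$ can start with $\{e\}$ for $e \in A$: in the no singleton case we have $k \geq 1$, so every $D \in \X$ has size at least $2$; for such a chain to contain $D$ we would need $e \in D$, but $D$ contains no element of $A$.

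Next I would count the contribution from chains starting in $X$. The total number of chains whose starting singleton lies in $X$ is $|X|(n-1)! = xn!$. By the definition of $\mu$, exactly $\mu n!$ of these avoid every set in $\X$, so the remaining $(x-\mu)n!$ do contain a set from $\X$. Subtracting from $\alpha n!$ gives
\[
\alpha n! - (x - \mu)n! = (\alpha - x + \mu)n!
\]
as the number of chains starting with $\{o\}$ for some $o \in [n]\setminus X\setminus A$ and containing a set from $\X$, which is exactly what the proposition asserts.

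There is essentially no obstacle here; it is a two-step double-count. The only delicate point is the observation that rules out singletons in $A$ as starters of such a chain, which genuinely needs the no singleton hypothesis together with Proposition~\ref{prop:Xform}.
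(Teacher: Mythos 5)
Your proof is correct and follows the same double-count as the paper: $\alpha n!$ chains meet $\X$ since $\X$ is an antichain, Proposition~\ref{prop:Xform} restricts the possible starting singletons to $X$ or $[n]\setminus X\setminus A$, and subtracting the $(x-\mu)n!$ chains that start in $X$ and meet $\X$ gives the result. One small remark: the no singleton hypothesis is not actually load-bearing here --- Proposition~\ref{prop:Xform} already guarantees that sets in $\X$ are disjoint from $A$, so a chain through $D\in\X$ cannot start in $A$ regardless of whether $\X$ contains singletons; the proposition is simply stated for the no singleton case because that is where it is used later.
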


\begin{proof}
A total of $\alpha n!$ chains contain a set $D\in\X$. By Proposition
\ref{prop:Xform}, the singleton on such a chain is either from $X$
or $[n]\setminus X\setminus A$. The number of chains which start
with an element of $X$ as their singleton and do not contain a set
from $\X$ is $\mu n!$, so the number of chains which contain a set
from $\X$, and which start with an element of $X$, is $(x-\mu)n!$.
On the rest, the singleton is from $[n]\setminus X\setminus A$.
\end{proof}
\begin{prop}
\label{prop:Bchainbound}$\left(\beta-\tilde{a}(1-x-a)\One+\nu\right)n!$
chains start with $\{o\}$ for some $o\in[n]\setminus X\setminus A$,
and contain a set from $\B$.
\end{prop}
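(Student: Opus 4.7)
The plan is to count chains containing a set from $\B$ in two ways, split according to which element of $[n]$ is the singleton of the chain.

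First, I would observe that since $\B$ is an antichain (Proposition \ref{prop:Bform}), the total number of chains containing a set from $\B$ equals $\beta n!$. Every chain begins with a singleton $\{y\}$ for some $y \in [n]$, and I would partition according to whether $y\in X$, $y\in A$, or $y\in[n]\setminus X\setminus A$, and show that almost all the contribution comes from the third case.

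Next I would rule out the first two cases except for $\tilde A$. For $y=d\in X$: every set on such a chain contains $d$, but sets in $\B\subseteq\F$ are disjoint from $X$, so no chain starting with $\{d\}$ for $d\in X$ can contain a set from $\B$. For $y=e\in A$: any $B\in\B$ lying on such a chain must contain $e$, and by Proposition \ref{prop:Bform} $B$ contains exactly one element of $A$; this forces $e\in\tilde A$. Hence all chains that contain a set from $\B$ but do not start in $[n]\setminus X\setminus A$ must start with $\{e\}$ for some $e\in\tilde A$.

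Now I would count the chains starting with $\{e\}$ for some $e\in\tilde A$ that \emph{do} contain a set from $\B$. The total number of chains with singleton in $\tilde A$ is $|\tilde A|(n-1)! = \tilde a\, n!$, and by definition $c_0 n!$ of them avoid $\B$; by Proposition \ref{prop:cbound}, $c_0 = \tilde a(x+a)\one + \nu$. So the count becomes
\[
\bigl(\tilde a - \tilde a(x+a)\one - \nu\bigr)n! = \bigl(\tilde a(1-(x+a)\one) - \nu\bigr)n!.
\]
A short algebraic step, plugging in the definitions of $\one$ and $\One$, gives the identity $1-(x+a)\one = (1-x-a)\One$ (both sides equal $\frac{(1-x-a)n}{n-1}$), so this count equals $\bigl(\tilde a(1-x-a)\One - \nu\bigr)n!$.

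Subtracting from $\beta n!$ yields the claimed $\bigl(\beta - \tilde a(1-x-a)\One + \nu\bigr)n!$ chains that start with $\{o\}$ for some $o\in[n]\setminus X\setminus A$ and contain a set from $\B$. There is no real obstacle here beyond being careful with the $\One$/$\one$ correction factors; the key conceptual point, handled by Proposition \ref{prop:Bform}, is that sets of $\B$ sitting on a chain through $\{e\}$ (with $e\in A$) must use $e$ as their unique $A$-element, forcing $e\in\tilde A$.
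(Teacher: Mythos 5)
Your proof is correct and takes essentially the same route as the paper: both start from the total $\beta n!$ chains meeting $\B$, rule out singletons in $X$ and $A\setminus\tilde A$ using Proposition \ref{prop:Bform}, and enumerate the remaining $\tilde A$-started chains. The only cosmetic difference is that the paper directly counts the $\tilde a n\cdot(1-x-a)n\cdot(n-2)! = \tilde a(1-x-a)\One\, n!$ chains whose first two added elements are in $\tilde A$ and $[n]\setminus X\setminus A$ respectively, whereas you subtract $c_0 n!$ from $\tilde a\, n!$ and invoke the identity $c_0=\tilde a(x+a)\one+\nu$ from Proposition \ref{prop:cbound} plus the algebraic identity $1-(x+a)\one=(1-x-a)\One$, arriving at the same count.
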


\begin{proof}
A total of $\beta n!$ chains contain a set from $\B$. A set in $\B$
is of the form $\{e,o_{1},\ldots,o_{k}\}$ with $e\in\tilde{A},o_{1},\ldots,o_{k}\in[n]\setminus X\setminus A,k\geq1$.
A chain that contains a $B\in\B$, and does not start with $\{o\}$
for some $o\in[n]\setminus X\setminus A$, must start with an element
of $\tilde{A}$, and continue with an element of $[n]\setminus X\setminus A$
as the second element added to form the chain. There are $\tilde{a}n\cdot(1-x-a)n\cdot(n-2)!=\tilde{a}(1-x-a)\One n!$
such chains, out of which $\nu n!$ do not contain any set from $\B$.
So $\left(\tilde{a}(1-x-a)\One-\nu\right)n!$ chains contain a set
from $\B$ and start with an element of $\tilde{A}$. The rest start
with $\{o\}$ for some $o\in[n]\setminus X\setminus A$.
\end{proof}
\begin{prop}
\label{prop:mubound}In the no singleton case, $\mu\geq x(x+a)\one$;
and the number of chains of the form $\emptyset,\{d\},\{d,o\},\ldots$
with $d\in X,o\in[n]\setminus X\setminus A$, which do not contain
any set from $\X$, is $\bigl(\mu-x(x+a)\one\bigr)n!$.
\end{prop}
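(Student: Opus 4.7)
The plan is to count, in the no singleton case, chains whose first set is $\{d\}$ for some $d \in X$ and whose second set is $\{d, d'\}$ with $d' \in (X \cup A) \setminus \{d\}$, show that \emph{all} such chains avoid $\X$, and then subtract from $\mu n!$.

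First I would count these chains directly: the number of chains of the form $\emptyset, \{d\}, \{d, d'\}, \ldots$ with $d \in X$ and $d' \in (X \cup A) \setminus \{d\}$ is
\[
xn \cdot ((x+a)n - 1) \cdot (n-2)! \;=\; x(x+a)\one \cdot n!,
\]
using the definition of $\one$. The heart of the argument is then to show that \emph{every} such chain contains no set from $\X$. By Proposition \ref{prop:Xform}, every $D \in \X$ is of the form $\{d, o_1, \ldots, o_k\}$ with $d \in X$ and $o_i \in [n] \setminus X \setminus A$; in the no singleton case $k \geq 1$, so $D$ contains exactly one element of $X$ and no element of $A$. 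Now split into two cases for $d'$:

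If $d' \in X$, then every set on the chain containing $\{d, d'\}$ contains two elements of $X$, so cannot lie in $\X$. If $d' \in A$, write $d' = e$; every set on the chain containing $\{d, e\}$ is a superset of $\{e\} \in \F$, hence related to a set of $\F$, so by the unrelatedness assumption of Lemma \ref{lem:ind} it cannot lie in $\X$. Thus all $x(x+a)\one n!$ of the chains counted above avoid $\X$.

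Since $\mu n!$ is the total number of chains starting with some $\{d\}$, $d \in X$, which avoid $\X$, we immediately get $\mu \geq x(x+a)\one$. Moreover, every chain counted by $\mu n!$ has its second element either in $(X \cup A) \setminus \{d\}$ (exactly $x(x+a)\one n!$ of them, all of which avoid $\X$ by the above) or in $[n] \setminus X \setminus A$; so the number of $\X$-avoiding chains whose second set has the form $\{d, o\}$ with $o \in [n] \setminus X \setminus A$ is exactly $\bigl(\mu - x(x+a)\one\bigr)n!$, which is the second claim. There is no real obstacle here beyond the case analysis; the only point requiring care is checking that the ``unrelated to $\F$'' condition on $\X$ rules out sets through $\{d, e\}$ with $e \in A$, which is what forces the bookkeeping to work cleanly.
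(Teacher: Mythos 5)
Your proof is correct and follows essentially the same route as the paper's: count the chains of the form $\emptyset,\{d\},\{d,d'\},\ldots$ with $d\in X$, $d'\in (X\cup A)\setminus\{d\}$, observe they all avoid $\X$ in the no singleton case, and subtract. You supply a slightly more detailed justification of the ``avoids $\X$'' step (routing the $d'\in A$ case through $\{d'\}\in\F$ and the unrelatedness hypothesis, where the paper just leans on Proposition~\ref{prop:Xform}, which already says sets in $\X$ contain no element of $A$), but this is the same observation phrased differently.
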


\begin{proof}
A total of $\mu n!$ chains start with an element of $X$ and do not
contain any set from $\X$. The chains of the form $\emptyset,\{d_{1}\},\{d_{1},d_{2}\},\ldots$
with $d_{1}\in X,d_{2}\in X\cup A$ never contain a set from $\X$
when $\X$ contains no singleton. The number of these chains is $xn\cdot(xn+an-1)\cdot(n-2)!=\bigl(x(x+a)\one\bigr)n!$.
For the rest, the second element added to form the chain is from $[n]\setminus X\setminus A$.
\end{proof}
\begin{nameddef}[Notation]
Let $X'=X\cup A$. Let $\Y=\bigl\{\{d,o\}:d\in X,o\in[n]\setminus X\setminus A\bigr\}$,
and let $\Z=\bigl\{\{e,o\}:e\in A\setminus\tilde{A},o\in[n]\setminus X\setminus A\bigr\}\bigr\}$.
In the \emph{singletons case}, let $\X'=\Y\sqcup\B\sqcup\Z$. (Note
that here and in the rest of the paper, $\sqcup$ stands for a union
of sets which are pairwise disjoint.) In the \emph{no singleton case},
let $\X'=\X\sqcup\B\sqcup\Z$.
\end{nameddef}

\begin{prop}
The three families which make up $\X'$ are indeed disjoint in each
case, and their union forms an antichain.
\end{prop}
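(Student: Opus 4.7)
My plan is to exploit the distinguishing ``signatures'' of the three families in terms of their intersections with $X$, $\tilde{A}$, and $A\setminus\tilde{A}$. Every set of $\Y$ (and, in the no singleton case, of $\X$) meets $X$ in exactly one element and avoids $A$; every set of $\B$ avoids $X$, has size at least $2$, and by Proposition \ref{prop:Bform} contains exactly one element of $A$, which by definition of $\tilde A$ lies in $\tilde{A}$; every set of $\Z$ avoids $X$, has size exactly $2$, and contains exactly one element of $A$, which lies in $A\setminus\tilde{A}$.

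Disjointness then follows pairwise from these signatures: $\Y$ (or $\X$ in the no singleton case) is separated from both $\B$ and $\Z$ by the presence of an element of $X$; and $\B$ is separated from $\Z$ because $\B$'s unique element of $A$ lies in $\tilde{A}$ while $\Z$'s lies in $A\setminus\tilde{A}$. (If a $\B$-set had size $2$ and coincided with a $\Z$-set, the two distinct $A$-elements would collapse, a contradiction.)

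For the antichain property, the internal antichain condition is the hypothesis of Lemma \ref{lem:ind} for $\X$ and is Proposition \ref{prop:Bform} for $\B$; within $\Y$ and within $\Z$ all members are distinct $2$-element sets, so no containment is possible. For cross relations I reason once more via the signatures. The pair $\Y$ vs $\B$: a $\B$-set cannot contain a $\Y$-set because $B\cap X=\emptyset$ while a $\Y$-set has an element of $X$; a $\Y$-set (of size $2$, one element in $X$, one outside) cannot contain a size-$2$ $B$-set (both elements outside $X$), and cannot contain a bigger $B$ for cardinality reasons. The pair $\Y$ vs $\Z$ is handled identically, using that $\Z$-sets avoid $X$. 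The pair $\B$ vs $\Z$: if $Z\subseteq B$ then $B$ would contain the $A\setminus\tilde A$-element of $Z$, contradicting Proposition \ref{prop:Bform}'s uniqueness plus the $\tilde A$-membership of $B$'s $A$-element; and $B\subseteq Z$ forces $B=Z$ by $|Z|=2$, again contradicted by the $\tilde A$ versus $A\setminus\tilde A$ distinction.

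In the no singleton case I additionally need $\X$ unrelated to $\B$ and to $\Z$. The first is immediate because $\B\subseteq\F$ and every set of $\F$ is by hypothesis unrelated to every set of $\X$. For $\X$ vs $\Z$: $Z$ avoids $X$ while every $D\in\X$ contains an element of $X$, so $Z\not\supseteq D$ and $D\ne Z$; and $Z\subseteq D$ would place the $A$-element of $Z$ into $D$, which is ruled out by the remark preceding Example \ref{exa:nosingleton} that $A\subseteq[n]\setminus\bigcup\X$ in this case. I expect no genuine obstacle: the statement is bookkeeping across the two cases, and the only non-obvious inputs are Proposition \ref{prop:Bform} (that $\B$-sets contain exactly one element of $A$) and the observation $A\cap\bigcup\X=\emptyset$ in the no singleton case, both of which have already been established.
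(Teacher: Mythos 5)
Your proof is correct and takes essentially the same approach as the paper: both arguments identify, for each of the three constituent families, which special elements ($X$, $\tilde{A}$, or $A\setminus\tilde{A}$) its members contain and of what size they are, then check each cross pair by noting one family's distinguishing element is absent from the other. The only cosmetic difference is that you cite the remark before Example \ref{exa:nosingleton} for $A\cap\bigcup\X=\emptyset$, whereas the paper reads this directly off Proposition \ref{prop:Xform}; the paper also folds disjointness and the antichain property into a single "unrelated" check, while you treat them separately, but the content is the same.
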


\begin{proof}
$\B$ is an antichain by Proposition \ref{prop:Bform}; $\X$ is an
antichain by definition; and $\Y$ and $\Z$ are antichains because
both consist of size 2 sets only. Let $D=\{d,o_{1},\ldots,o_{k}\}\in\X$,
$Y=\{d,o\}\in\Y$, $B=\{e_{1},p_{1},\ldots,p_{l}\}\in\B$ and $Z=\{e_{2},q\}\in\Z$
with $d\in X$, $e_{1}\in\tilde{A}$, $e_{2}\in A\setminus\tilde{A}$,
$o_{i},o,p_{i},q\in[n]\setminus X\setminus A$, and $l\geq1$. $B$
is unrelated to $D$ by definition, and to $Y$ because $d\notin B$
and $\left|B\right|\geq2$. $Z$ is unrelated to $D$ and $Y$ because
$d\notin Z$ and $e_{2}\notin D,Y$; $Z$ is unrelated to $B$ because
$e_{1}\notin Z$ and $e_{2}\notin B$.
\end{proof}
\begin{prop}
\label{prop:indcondition}Sets in $\C$ are disjoint from $X'$, and
they are unrelated to every set in $\X'$ (in both cases).
\end{prop}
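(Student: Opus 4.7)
The proof plan is to verify the two claims separately, and for the second claim to handle each of the three pieces of $\X'$ in turn.

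First, disjointness from $X' = X \cup A$ is immediate: by the very definition of $\C$, every $C \in \C$ is contained in $[n] \setminus X \setminus A$, so $C \cap X' = \emptyset$.

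For the unrelatedness claim, I would split $\X'$ according to its three constituent families. The $\B$ part is already handled by Proposition \ref{prop:Bform}, which states that sets in $\B$ are unrelated to every set in $\C$. The $\X$ part (in the no singleton case) is handled by the hypothesis of Lemma \ref{lem:ind}, which assumes that every set in $\F \supseteq \C$ is unrelated to every set in $\X$. So the real work is to check the $\Y$ piece (in the singletons case) and the $\Z$ piece (in both cases).

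For $\Y$, take $Y = \{d,o\} \in \Y$ with $d \in X$ and $o \in [n] \setminus X \setminus A$, and any $C \in \C$. Since $C \subseteq [n] \setminus X \setminus A$, we have $d \notin C$, so $Y \not\subseteq C$; and if $C \subseteq Y$ then $C \subseteq \{o\}$. Since $\emptyset \notin \F$, this would force $C = \{o\}$, meaning $o \in A$ — contradicting $o \in [n] \setminus X \setminus A$. The argument for $\Z$ is identical: writing $Z = \{e,o\} \in \Z$ with $e \in A \setminus \tilde{A}$ and $o \in [n] \setminus X \setminus A$, we have $e \notin C$ (because $C$ is disjoint from $A$), so $Z \not\subseteq C$, and $C \subseteq Z$ would again force $C = \{o\}$ with $o \in A$, a contradiction.

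There is no real obstacle here — it is a routine bookkeeping argument using only the definitions of $\C$, $\Y$, $\Z$ and the fact that $\emptyset \notin \F$; the main thing is to remember to split along the singletons / no singleton dichotomy and to invoke Proposition \ref{prop:Bform} for the $\B$ component.
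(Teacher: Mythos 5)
Your proof is correct and follows the same decomposition as the paper's: disjointness is immediate from the definition of $\C$; the $\X$ piece is handled by the hypothesis of Lemma \ref{lem:ind}; the $\B$ piece by Proposition \ref{prop:Bform}; and for $\Y,\Z$ one checks containment in both directions. The one cosmetic difference is in how you rule out $C\subseteq Y$ (or $C\subseteq Z$): you argue directly that $C$ would have to equal $\{o\}$, forcing $o\in A$, a contradiction; the paper instead observes that $\left|C\right|\geq 2=\left|Y\right|=\left|Z\right|$ and so $C$ cannot be a proper subset. These are the same fact stated two ways (the lower bound $\left|C\right|\geq2$ holds precisely because $\emptyset\notin\F$ and any singleton of $\F$ lying in $[n]\setminus X$ is by definition in $A$), so your reasoning effectively supplies the justification for the paper's size bound rather than diverging from it.
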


\begin{proof}
For every $C\in\C$, $C\subseteq[n]\setminus X'$ and it is unrelated
to every set in $\X$ by definition. $C$ is unrelated to every set
in $\B$ by Proposition \ref{prop:Bform}. It also cannot be a superset
of a $Y\in\Y$ or a $Z\in\Z$, since those contain an element of $X$
or $A$; neither a proper subset of $Y$ or $Z$ because $\left|Y\right|=\left|Z\right|=2\leq\left|C\right|$.
\end{proof}
\begin{prop}
\label{prop:indchainbound}The number of chains that start with an
element of $[n]\setminus X'$ and contain a set from $\X'$ is

\begin{itemize}
\item at least $\left[x(1-x-a)\One+\left(\beta-\tilde{a}(1-x-a)\One+\nu\right)+(1-x-a)\left(a-\tilde{a}\right)\One\right]n!$
in the \emph{singletons case}, and
\item at least $\left[\left(\alpha-x+\mu\right)+\left(\beta-\tilde{a}(1-x-a)\One+\nu\right)+(1-x-a)\left(a-\tilde{a}\right)\One\right]n!$
in the \emph{no singleton case}.
\end{itemize}
\end{prop}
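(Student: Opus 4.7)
The plan is to count the chains contributed by each of the three components in the disjoint union defining $\X'$ and sum them. Crucially, $\X'$ is an antichain, so no maximal chain can contain two sets from $\X'$; hence these three counts do not overlap, and their sum is (in fact exactly) the quantity we wish to bound from below.

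The middle summand, $(\beta - \tilde{a}(1-x-a)\One + \nu)n!$, is precisely the quantity already counted in Proposition \ref{prop:Bchainbound}: chains that start with $\{o\}$ for some $o \in [n]\setminus X \setminus A = [n]\setminus X'$ and contain a set from $\B$. So nothing needs to be redone here.

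For the last summand I would enumerate directly. Every set $\{e,o\} \in \Z$ has size $2$, so it can appear on a maximal chain only as the second set; since $e \in A \subseteq X'$, the singleton of such a chain must be $\{o\} \in [n]\setminus X'$. Picking $o \in [n]\setminus X\setminus A$ in $(1-x-a)n$ ways, then $e \in A\setminus \tilde{A}$ in $(a-\tilde{a})n$ ways, and then completing the chain in $(n-2)!$ ways yields exactly $(1-x-a)(a-\tilde{a})\One n!$ chains containing a set from $\Z$ and starting in $[n]\setminus X'$.

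The first summand splits by case. In the \emph{no singleton case} it is supplied verbatim by Proposition \ref{prop:Xchainbound}: there are $(\alpha - x + \mu)n!$ chains starting with $\{o\}$ for some $o \in [n]\setminus X \setminus A$ and containing a set from $\X$. In the \emph{singletons case} I would repeat the direct count used for $\Z$, applied now to $\Y$: a set $\{d,o\} \in \Y$ must appear as the second set on a chain whose singleton is $\{o\} \in [n]\setminus X\setminus A$ (since $d \in X$ forces the singleton not to be $\{d\}$), and direct enumeration gives $(1-x-a)n \cdot xn \cdot (n-2)! = x(1-x-a)\One n!$ chains. The only point needing verification is the non-overlap of the three counts, and that is immediate from $\X'$ being an antichain, so I do not foresee a significant obstacle beyond this bookkeeping.
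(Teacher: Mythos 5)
Your proposal is correct and takes essentially the same approach as the paper: the paper likewise enumerates the $\Y$ and $\Z$ contributions directly (each set there has size two, so it must be the second set on the chain), invokes Proposition \ref{prop:Bchainbound} for $\B$ and Proposition \ref{prop:Xchainbound} for $\X$ in the no-singleton case, and sums. You simply make explicit the disjointness of the three counts, which the paper leaves implicit after having already established that $\X'$ is an antichain.
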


\begin{proof}
The number of chains on which the singleton is $\{o\}$ with $o\in[n]\setminus X'=[n]\setminus X\setminus A$,
and the second set is $\{o,d\}\in\Y$ with $d\in X$, is $xn\cdot(1-x-a)n\cdot(n-2)!=x(1-x-a)\One n!$.
The number of chains on which the singleton is $\{o\}$, and the second
set is $\{o,e\}\in\Z$ with $e\in A\setminus\tilde{A}$, is $(1-x-a)n\cdot(a-\tilde{a})n\cdot(n-2)!=(1-x-a)\left(a-\tilde{a}\right)\One n!$.
The rest follows from Proposition \ref{prop:Xchainbound} and Proposition
\ref{prop:Bchainbound}.
\end{proof}
\begin{prop}
\label{prop:indmubound}The number of chains on which the singleton
is $\{o\}$ with $o\in[n]\setminus X'$, the second set is $\{o,d\}$
with $d\in X'=X\cup A$, and which do not contain any set from $\X'$,
is

\begin{itemize}
\item $\nu n!$ in the \emph{singletons case}, and
\item $\bigl(\mu-x(x+a)\one+\nu\bigr)n!$ in the \emph{no singleton case}.
\end{itemize}
\end{prop}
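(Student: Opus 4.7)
The plan is to use the natural bijection obtained by swapping the first two elements added to form the chain: a chain $\emptyset,\{o\},\{o,d\},S_3,\ldots,[n]$ corresponds to $\emptyset,\{d\},\{d,o\},S_3,\ldots,[n]$, and the two chains agree on every set of size at least $2$. I will then split according to where $d$ lies in $X'=X\sqcup(A\setminus\tilde{A})\sqcup\tilde{A}$ and, in each case, translate the condition ``avoids $\X'$'' into a condition already controlled (by Proposition \ref{prop:mubound} or by the definition of $\nu$).

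If $d\in A\setminus\tilde{A}$, then $\{o,d\}\in\Z\subseteq\X'$, so no such chain is counted; this case contributes nothing in either case.

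If $d\in X$, then in the singletons case $\{o,d\}\in\Y\subseteq\X'$, contributing $0$. In the no singleton case, every set on the chain from $\{o,d\}$ onward contains $d\in X$, and so lies neither in $\B$ (whose members are disjoint from $X$, by Proposition \ref{prop:Bform}) nor in $\Z$ (wrong shape); consequently, the chain avoids $\X'$ iff it avoids $\X$. Swapping turns such a chain into one of the form $\emptyset,\{d\},\{d,o\},\ldots$ with $d\in X$, $o\in[n]\setminus X\setminus A$, avoiding $\X$, and Proposition \ref{prop:mubound} counts these as $(\mu-x(x+a)\one)n!$.

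If $d\in\tilde{A}$, then every set on the chain from $\{o,d\}$ onward contains $d\in A$; by Proposition \ref{prop:Xform}, sets of $\X$ avoid $A$, so no such set can lie in $\X$. The same sets cannot lie in $\Y$ either (since $\Y$-sets consist of one element of $X$ and one of $[n]\setminus X\setminus A$, and thus avoid $A$), nor in $\Z$ (which requires an element of $A\setminus\tilde{A}$). Therefore avoiding $\X'$ reduces to avoiding $\B$, and after swapping we obtain precisely the chains defining $\nu$, contributing $\nu n!$.

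Adding the contributions gives $\nu n!$ in the singletons case and $\bigl(\mu-x(x+a)\one+\nu\bigr)n!$ in the no singleton case. The only substantive step is the bookkeeping of which parts of $\X'=\Y\sqcup\B\sqcup\Z$ (resp.\ $\X\sqcup\B\sqcup\Z$) can possibly appear on a chain of a given shape, and this is immediate from the structural facts that $\B$-sets are disjoint from $X$ (Proposition \ref{prop:Bform}) and that $\X$-sets are disjoint from $A$ (Proposition \ref{prop:Xform}).
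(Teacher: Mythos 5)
Your proof is correct and follows essentially the same route as the paper: the swap bijection on the first two elements of the chain, followed by the case split according to whether $d\in X$, $d\in\tilde{A}$, or $d\in A\setminus\tilde{A}$, reducing to Proposition \ref{prop:mubound} and the definition of $\nu$. The only cosmetic difference is that the paper first notes $\X'$ contains no singletons and applies the bijection once, whereas you interleave the swap with the case analysis; the substance is identical.
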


\begin{proof}
Let $\A=\emptyset,A_{1},A_{2},\ldots,A_{n-1},[n]$ be a chain with
$\emptyset\subset A_{1}\subset A_{2}\subset\ldots\subset A_{n-1}\subset[n]$.
Let $\varphi(\A)$ be the chain $\emptyset,A_{2}\setminus A_{1},A_{2},A_{3},\ldots,A_{n-1},[n]$.
(In other words, in the order in which elements of $[n]$ are added
to form the chain, the first two are swapped.) $\varphi$ is a bijection.

It is easy to check that $\X'$ does not contain singletons. $\varphi$
is a bijection between chains of the form $\emptyset,\{o\},\{o,d\},\ldots$
containing no set from $\X'$, and chains of the form $\emptyset,\{d\},\{o,d\},\ldots$
containing no set from $\X'$, with $o\in[n]\setminus X'$ and $d\in X\cup A$.
Below we classify the chains $\emptyset,\{d\},\{o,d\},\ldots$ based
on what set $d$ belongs to and count them separately.

\begin{itemize}
\item For $d\in X$, $\{o,d\}\in\Y$ in the singletons case. In the no singleton
case, $\bigl(\mu-x(x+a)\one\bigr)n!$ chains of the form $\emptyset,\{d\},\{o,d\},\ldots$
contain no set from $\X$ by Proposition \ref{prop:mubound}; these
chains also contain no set from $\B$ or $\Z$, since sets from those
do not contain any element of $X$.
\item For $d\in\tilde{A}$, the number of chains of this form which contain
no set from $\B$ is $\nu n!$; these chains also contain no set from
$\X,\Y$ or $\Z$, since sets from those contain no element of $\tilde{A}$.
\item For $d\in A\setminus\tilde{A}$, $\{o,d\}\in\Z$.
\end{itemize}
Summing these cases, we get the statement of the proposition.
\end{proof}

\subsection{\label{subsec:Induction}Inductive step}
\begin{nameddef}[Notation]
Using standard notation for intervals, let $\left[A,[n]\right]$
denote the Boolean lattice $\left\{ S\subseteq[n]:A\subseteq S\right\} $.
Let $[n]\setminus X'=[n]\setminus X\setminus A=\{o_{1},o_{2},\ldots,o_{(1-x-a)n}\}$;
and for a family of sets $\A$, let $\A-o_{i}=\left\{ S\setminus\{o_{i}\}:S\in\A\right\} $.
Let $\C'_{i}=\left(\C\cap\bigl[\{o_{i}\},[n]\bigr]\right)-o_{i}$,
and $\X'_{i}=\left(\X'\cap\bigl[\{o_{i}\},[n]\bigr]\right)-o_{i}$.
Let $\alpha'_{i}=l(n-1,\X'_{i})$. (Here the Lubell function on the
Boolean lattice $2^{[n]\setminus\{o_{i}\}}$ of order $n-1$ is used.)
\end{nameddef}

$\C'_{i}\subseteq2^{[n]\setminus\{o_{i}\}}$ is a $\Lambda$-free
family which does not contain $\emptyset$ (since $o_{i}\notin A$,
so $\{o_{i}\}\notin\F$), nor any set larger than $n-1-n'$. Sets
in $\C'_{i}$ are disjoint from $X'$, and are unrelated to sets in
$\X'_{i}$ by Proposition \ref{prop:indcondition}. Moreover, every
set in $\X'_{i}$ contains exactly one element of $X'$. Therefore,
the conditions of Lemma \ref{lem:ind} are satisfied for the family
$\C'_{i}\subseteq2^{[n]\setminus\{o_{i}\}}$ where the corresponding
``forbidden'' set is $X'\subseteq[n]\setminus\{o_{i}\}$, with $\frac{\left|X'\right|}{n-1}=(x+a)\One$
and the corresponding ``forbidden'' antichain is $\X'_{i}$.

Since $\X'_{i}$ is an antichain, $\alpha'_{i}(n-1)!$ is the number
of chains in $2^{[n]\setminus\{o_{i}\}}$ that contain a set from
$\X'_{i}$. Chains of $2^{[n]\setminus\{o_{i}\}}$ correspond to chains
of $2^{[n]}$ that start with $\{o_{i}\}$. So by Proposition \ref{prop:indchainbound},
in the \emph{singletons case} 
\[
\sum_{i=1}^{(1-x-a)n}\alpha'_{i}\geq\left[x(1-x-a)\One+\left(\beta-\tilde{a}(1-x-a)\One+\nu\right)+(1-x-a)\left(a-\tilde{a}\right)\One\right]n,
\]
 and in the \emph{no singleton case} 
\[
\sum_{i=1}^{(1-x-a)n}\alpha'_{i}\geq\left[\left(\alpha-x+\mu\right)+\left(\beta-\tilde{a}(1-x-a)\One+\nu\right)+(1-x-a)\left(a-\tilde{a}\right)\One\right]n.
\]

Let $\mu'_{i}(n-1)!$ be the number of chains in the Boolean lattice
$2^{[n]\setminus\{o_{i}\}}$ which start with an element of $X'$
as a singleton, but do not contain any set from $\X'_{i}$. By Proposition
\ref{prop:indmubound}, in the \emph{singletons case }
\[
\sum_{i=1}^{(1-x-a)n}\mu'_{i}=\nu n,
\]
 and in the \emph{no singleton case} 
\[
\sum_{i=1}^{(1-x-a)n}\mu'_{i}=\bigl(\mu-x(x+a)\one+\nu\bigr)n.
\]

Let $c'_{i}(n-1)!$ be the number of MNM chains w.r.t.\ $\C'_{i}$
in $2^{[n]\setminus\{o_{i}\}}$. The corresponding $2^{[n]}$-chains,
starting with $\{o_{i}\}$, are MNM chains w.r.t.\ $\F$. The total
number of MNM chains w.r.t.\ $\F$ is $cn!$, out of which $c_{0}n!$
start with an element of $A$ as a singleton. By Proposition \ref{prop:cbound},
\[
\sum_{i=1}^{(1-x-a)n}c'_{i}=(c-c_{0})n=\left(c-\tilde{a}(x+a)\one-\nu\right)n.
\]

The following two examples are typical cases where, in the induction
step for the $\C'_{i}$'s, we will get the singletons case and the
no singleton case respectively.
\begin{example}
Let $X=\X=\emptyset$ and $\B=\bigl\{\{e,o\}:e\in A,o\in[n]\setminus A\bigr\}$.
Then $\tilde{A}=A$, $\beta=2a(1-a)\One$, and $\nu=0$. $X'=A$,
and $\X'_{i}=\bigl\{\{e\}:e\in A\bigr\}$. $\sum_{i=1}^{(1-a)n}\alpha'_{i}=a(1-a)\One n$,
$\alpha'_{i}=a\One=\frac{\left|X'\right|}{n-1}$ and $\mu'_{i}=0$.
$\sum_{i=1}^{(1-x-a)n}c'_{i}=\left(c-a^{2}\right)n$ and the average
of the $c'_{i}$'s is $\frac{c-a^{2}}{1-a}$.
\end{example}

\begin{example}
Let $X=\X=\emptyset$ and $\B\subseteq\hat{\B}:=\bigl\{\{e,o_{1},o_{2}\}:e\in A,o_{1},o_{2}\in[n]\setminus A\bigr\}$.
Then $X'=A$, and $\X'_{i}\subseteq\bigl\{\{e,o\}:e\in A,o\in[n]\setminus A\setminus\{o_{i}\}\bigr\}$.
Chains on $2^{[n]}$ of the form $\emptyset,\{e_{1}\},\{e_{1},o\},\{e_{1},o,e_{2}\},\ldots$
do not intersect $\B$. So $\sum_{i=1}^{(1-a)n}\mu'_{i}=\nu\geq a^{2}(1-a)\One^{2}\frac{\One a(n-1)-1}{\One a(n-2)}n$
(greater if $\B\subsetneqq\hat{\B}$), and the average of the $\mu'_{i}$'s
is $\geq a^{2}\One^{2}\frac{\One a(n-1)-1}{\One a(n-2)}={x'}^{2}\frac{x'(n-1)-1}{x'((n-1)-1)}$
where $x'=\frac{\left|X'\right|}{n-1}$. In the case of $\B=\hat{\B}$,
the size of the sets in $\C$ is at least 3, and the size of those
in $\C'_{i}$ is at least 2.

\end{example}

\begin{prop}
\label{prop:lubell_induction}
\[
l(\C)=\frac{1}{n}\sum_{i=1}^{(1-x-a)n}l(n-1,\C'_{i})\quad\textnormal{and}\quad l(\F)=a+\beta+l(\C)=a+\beta+\frac{1}{n}\sum_{i=1}^{(1-x-a)n}l(n-1,\C'_{i}).
\]
 (Still understanding the one parameter version $l(\F)$ as $l(n,\F)$
for a family $\F\subseteq2^{[n]}$.)
\end{prop}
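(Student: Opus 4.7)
\emph{Plan.} The proposition has two equalities, and both follow from elementary counting. The first is just additivity of the Lubell function over the disjoint decomposition $\F=\left\{\{e\}:e\in A\right\}\sqcup\B\sqcup\C$: the singletons $\{\{e\}:e\in A\}$ contribute $|A|/\binom{n}{1}=a$, the family $\B$ contributes $\beta$ by definition, and $\C$ contributes $l(\C)$. So the only thing to check is that every $F\in\F$ lies in exactly one of the three pieces, which is immediate from the definitions of $A$, $\B$, $\C$ together with the hypothesis that sets of $\F$ are disjoint from $X$.

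For the first equality, the plan is to swap the order of summation. By definition,
\[
\frac{1}{n}\sum_{i=1}^{(1-x-a)n}l(n-1,\C'_{i})
=\frac{1}{n}\sum_{i=1}^{(1-x-a)n}\sum_{\substack{C\in\C\\ o_i\in C}}\frac{1}{\binom{n-1}{|C|-1}},
\]
since a set $C\in\C$ contributes $C\setminus\{o_i\}$ to $\C'_i$ precisely when $o_i\in C$ (and $C\subseteq[n]\setminus X\setminus A$, so the only elements it can contain are the $o_i$'s). Swapping the order of summation turns the inner sum into $|C|$ copies of $1/\binom{n-1}{|C|-1}$.

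The key identity is then $\frac{|C|}{\binom{n-1}{|C|-1}}=\frac{n}{\binom{n}{|C|}}$, which is a one-line manipulation of factorials. Plugging this in gives $\frac{1}{n}\sum_i l(n-1,\C'_i)=\sum_{C\in\C}1/\binom{n}{|C|}=l(\C)$, as required. There is no conceptual obstacle; the only thing to be careful about is that $\C'_i$ lives in the Boolean lattice $2^{[n]\setminus\{o_i\}}$ of dimension $n-1$, so the Lubell weight of $C\setminus\{o_i\}$ there is $1/\binom{n-1}{|C|-1}$ rather than $1/\binom{n-1}{|C|}$.
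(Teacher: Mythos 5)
Your proof is correct, but it takes a different route from the paper's. The paper uses the chain-average interpretation of the Lubell function: every maximal chain of $2^{[n]}$ intersecting $\C$ must start with some singleton $\{o_i\}$ (since sets in $\C$ contain only elements of $[n]\setminus X\setminus A$), and the $(n-1)!$ chains starting with $\{o_i\}$ correspond bijectively to chains of $2^{[n]\setminus\{o_i\}}$ intersecting $\C'_i$; summing $|\h\cap\C|$ over chains grouped by starting singleton and dividing by $n!$ gives the first equality directly. You instead work straight from the definition $l(\C)=\sum_{C\in\C}1/\binom{n}{|C|}$, swap the order of summation over $(i,C)$, and invoke the binomial identity $|C|/\binom{n-1}{|C|-1}=n/\binom{n}{|C|}$. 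Both arguments are elementary double-counts and both are valid (your identity only needs $|C|\ge 1$, which holds since $\emptyset\notin\F\supseteq\C$). The paper's version is more in the spirit of the rest of Section~\ref{sec:invertedV}, which reasons about the Lubell function via chains throughout, and it makes visible the bijection between chains of $2^{[n]}$ starting at $\{o_i\}$ and chains of $2^{[n]\setminus\{o_i\}}$ that underlies the whole induction; your version is a shorter algebraic verification that avoids appealing to that chain-average picture. The second equality (additivity over $\F=\{\{e\}:e\in A\}\sqcup\B\sqcup\C$) is handled the same way in both.
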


\begin{proof}
Every chain in the Boolean lattice $2^{[n]}$ that intersects $\C$
has an $\{o_{i}\}$ as a singleton, and thus corresponds to a chain
in the Boolean lattice $\left[\{o_{i}\},[n]\right]-o_{i}$ that intersects
$\C'_{i}$. 
\[
l(\C)=\frac{1}{n!}\sum_{\h\text{ is a chain in }2^{[n]}}\left|\h\cap\C\right|=\frac{1}{n!}\sum_{i=1}^{(1-x-a)n}\sum_{\h\text{ is a chain in }\left[\{o_{i}\},[n]\right]-o_{i}}\left|\h\cap\C'_{i}\right|=\frac{1}{n}\sum_{i=1}^{(1-x-a)n}l(n-1,\C'_{i}).
\]

Let $\A=\bigl\{\{e\}:e\in A\bigr\}$. Then $\F=\A\sqcup\B\sqcup\C$.
So $l(\F)=l(\A)+l(\B)+l(\C)$ with $l(\A)=\frac{\left|A\right|}{n}=a$
and $l(\B)=\beta$.
\end{proof}
We now prove Lemma \ref{lem:ind} (and thus Lemma \ref{lem:lubell})
using induction on $n$. According to Proposition \ref{prop:indbase},
Lemma \ref{lem:ind} holds for $n\leq n'$. By induction and Lemma
\ref{lem:functions} Point \ref{enu:concave}, 
\begin{align*}
l(n-1,\C'_{i}) & \leq f\!\left((x+a)\One,c'_{i}+\mu'_{i}+\frac{1}{n'}\right)-\left(\alpha'_{i}-\mu'_{i}-(x+a)\One\right)+\frac{3}{n'}\\
 & \leq f\!\left(x+a,c'_{i}+\mu'_{i}+\frac{1}{n'}\right)-\left(\alpha'_{i}-\mu'_{i}-(x+a)\One\right)+\frac{3}{n'}.
\end{align*}
So, by Proposition \ref{prop:lubell_induction}, we have
\begin{align*}
l(\C) & =\frac{1}{n}\sum_{i=1}^{(1-x-a)n}l(n-1,\C'_{i})\leq\frac{1}{n}\sum_{i=1}^{(1-x-a)n}f\!\left(x+a,c'_{i}+\mu'_{i}+\frac{1}{n'}\right)\\
 & {}-\frac{1}{n}\left(\sum_{i=1}^{(1-x-a)n}\alpha'_{i}-\sum_{i=1}^{(1-x-a)n}\mu'_{i}-\sum_{i=1}^{(1-x-a)n}(x+a)\One\right)+\frac{1}{n}\cdot\frac{3(1-x-a)n}{n'}.
\end{align*}

We handle the case of $1-x-a=0$ separately. If $1-x-a=0$, $A=[n]\setminus X$
and, since any non-singleton $\{e_{1},e_{2},\ldots\}\in\F$ would
form a $\Lambda$ with the singletons $\{e_{1}\},\{e_{2}\}\in\F$,
we have $\F=\A$ and $l(\F)=a=1-x$. This is only possible in the
singletons case, since a non-singleton in $\X$ would have to contain
elements of $[n]\setminus X\setminus A$. In the singletons case $\alpha=x$
and $\mu=0$, so $l(\F)=1-x\leq f(x,c)\leq f\!\left(x,c+\mu+\frac{1}{n'}\right)-(\alpha-\mu-x)+\frac{3}{n'}$
by Lemma \ref{lem:functions} Point \ref{enu:1-x}. From now on, we
assume that $1-x-a>0$.

Since $f$ is concave in $c$, by Jensen's inequality, and since $f$
is monotonously decreasing in $x$, 
\begin{align*}
l(\C) & \leq(1-x-a)f\!\left(x+a,\frac{\sum_{i=1}^{(1-x-a)n}c'_{i}+\sum_{i=1}^{(1-x-a)n}\mu'_{i}}{(1-x-a)n}+\frac{1}{n'}\right)\\
 & {}-\left(\frac{1}{n}\sum_{i=1}^{(1-x-a)n}\alpha'_{i}-\frac{1}{n}\sum_{i=1}^{(1-x-a)n}\mu'_{i}-(1-x-a)(x+a)\One\right)+\frac{3(1-x-a)}{n'}.
\end{align*}

\emph{Correction term calculations} that we will use later (assuming
$n'\leq n-1$):\emph{ }
\begin{equation}
\begin{gathered}(1-\one)(x+\tilde{a})(x+a)+\frac{1-x-a}{n'}=\frac{(x+\tilde{a})(1-x-a)}{n-1}+\frac{1-x-a}{n'}\\
\leq\frac{(1+x+\tilde{a})(1-x-a)}{n'}\leq\frac{1-(x+a)^{2}}{n'}\leq\frac{1}{n'}.
\end{gathered}
\label{eq:correction_nosing_c}
\end{equation}
\emph{
\begin{equation}
(1-\one)\tilde{a}(x+a)+\frac{1-x-a}{n'}\leq(1-\one)(x+\tilde{a})(x+a)+\frac{1-x-a}{n'}\leq\frac{1}{n'}.\label{eq:correction_singletons_c}
\end{equation}
\begin{equation}
\begin{gathered}2\tilde{a}(1-x-a)(\One-1)+2(\One-1)x-\left(2(\One-1)+(\one-1)\right)x(x+a)+\frac{3(1-x-a)}{n'}\\
\leq\frac{2a+3x}{n-1}+\frac{3(1-x-a)}{n'}\leq\frac{3}{n'}.
\end{gathered}
\label{eq:correction_nosing}
\end{equation}
\begin{equation}
2\tilde{a}(1-x-a)(\One-1)+\frac{3(1-x-a)}{n'}\leq\frac{2a}{n-1}+\frac{3(1-x-a)}{n'}\leq\frac{3}{n'}.\label{eq:correction_singletons}
\end{equation}
}

\emph{In the singletons case:} 
\begin{align*}
l(\F) & \leq a+\beta+(1-x-a)f\!\left(x+a,\frac{\left(c-\tilde{a}(x+a)\one-\nu\right)+\nu}{1-x-a}+\frac{1}{n'}\right)\\
 & {}-\Bigl(\left[x(1-x-a)\One+\left(\beta-\tilde{a}(1-x-a)\One+\nu\right)+(1-x-a)\left(a-\tilde{a}\right)\One\right]\\
 & {}-\nu-(1-x-a)(x+a)\One\Bigr)+\frac{3(1-x-a)}{n'}\displaybreak[0]\\
 & =a+(1-x-a)f\!\left(x+a,\frac{c-\tilde{a}(x+a)\one}{1-x-a}+\frac{1}{n'}\right)+2\tilde{a}(1-x-a)\One+\frac{3(1-x-a)}{n'}\displaybreak[0]\\
 & =a+(1-x-a)f\!\left(x+a,\frac{c-\tilde{a}(x+a)+(1-\one)\tilde{a}(x+a)+\frac{1-x-a}{n'}}{1-x-a}\right)+2\tilde{a}(1-x-a)\\
 & {}+2\tilde{a}(1-x-a)(\One-1)+\frac{3(1-x-a)}{n'}.
\end{align*}
 By Lemma \ref{lem:functions} Point \ref{enu:concave}~and Point
\ref{enu:g}, and \eqref{eq:correction_singletons_c} and \eqref{eq:correction_singletons}
in the Correction term calculations (note that in this case $\alpha=x$
and $\mu=0$), 
\[
l(\F)\leq g\!\left(x,c+\frac{1}{n'},a,\tilde{a}\right)+\frac{3}{n'}\leq f\!\left(x,c+\frac{1}{n'}\right)+\frac{3}{n'}=f\!\left(x,c+\mu+\frac{1}{n'}\right)-(\alpha-\mu-x)+\frac{3}{n'}.
\]
(Note that $\tilde{a}\leq\frac{c}{(x+a)\one}$, so $0\leq\frac{c-\tilde{a}(x+a)\one}{1-x-a}\leq\frac{c+\frac{1}{n'}-\tilde{a}(x+a)}{1-x-a}$,
and $\tilde{a}\leq\frac{c+\frac{1}{n'}}{x+a}$.)

\emph{In the no singleton case:} 
\begin{align*}
l(\F) & \leq a+\beta+(1-x-a)f\!\left(x+a,\frac{\left(c-\tilde{a}(x+a)\one-\nu\right)+\mu-x(x+a)\one+\nu}{1-x-a}+\frac{1}{n'}\right)\\
 & {}-\Bigl(\left[\left(\alpha-x+\mu\mathnote{here}\right)+\left(\beta-\tilde{a}(1-x-a)\One+\nu\right)+(1-x-a)\left(a-\tilde{a}\right)\One\right]\\
 & {}-\left[\mu-x(x+a)\one+\nu\right]-(1-x-a)(x+a)\One\Bigr)+\frac{3(1-x-a)}{n'}\displaybreak[0]\\
 & =a+(1-x-a)f\!\left(x+a,\frac{c+\mu-(x+\tilde{a})(x+a)\one}{1-x-a}+\frac{1}{n'}\right)-(\alpha-x(x+a)\one\mathnote{here}-x)\\
 & {}+2\tilde{a}(1-x-a)\One+(2\cdot\One-1)x-(2\cdot\One+\one)x(x+a)+\frac{3(1-x-a)}{n'}\displaybreak[0]\\
 & =a+(1-x-a)f\!\left(x+a,\frac{c+\mu-(x+\tilde{a})(x+a)+(1-\one)(x+\tilde{a})(x+a)+\frac{1-x-a}{n'}}{1-x-a}\right)\\
 & {}+2\tilde{a}(1-x-a)+x-3x(x+a)-(\alpha-x(x+a)\one\mathnote{here}-x)\\
 & {}+2\tilde{a}(1-x-a)(\One-1)+2(\One-1)x-\left(2(\One-1)+(\one-1)\right)x(x+a)+\frac{3(1-x-a)}{n'}.
\end{align*}
 By Lemma \ref{lem:functions} Point \ref{enu:concave}~and Point
\ref{enu:h}, Proposition \ref{prop:mubound}. and \eqref{eq:correction_nosing_c}
and \eqref{eq:correction_nosing} in the Correction term calculations,
\[
l(\F)\leq h\!\left(x,c+\mu+\frac{1}{n'},a,\tilde{a}\right)-(\alpha-\mu-x)+\frac{3}{n'}\leq f\!\left(x,c+\mu+\frac{1}{n'}\right)-(\alpha-\mu-x)+\frac{3}{n'}.
\]
(Note that $\tilde{a}\leq\frac{c}{(x+a)\one}$, so $0\leq\frac{c-\tilde{a}(x+a)\one}{1-x-a}\leq\frac{c+\mu+\frac{1}{n'}-(x+\tilde{a})(x+a)}{1-x-a}$,
and $\tilde{a}\leq\frac{c+\mu+\frac{1}{n'}}{x+a}-x$.)

\section{Diamond-free families \textendash{} Proof of Theorem \ref{thm:main}\label{sec:diamond}}

Let $\F$ be a diamond-free family on $2^{[n]}$.

We cite Lemma 1 from \cite{diamond_228}:
\begin{lem}[Axenovich, Manske, Martin \cite{diamond_228}]
\[
\sum_{\substack{k\in\{0,1,\ldots,n\}\\
\left|k-n/2\right|\geq n^{2/3}
}
}\binom{n}{k}\leq2^{n-\Omega(n^{1/3})}=2^{-\Omega(n^{1/3})}\binom{n}{\left\lfloor n/2\right\rfloor }.
\]
\end{lem}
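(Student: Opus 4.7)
The plan is to derive the bound from a standard Chernoff-type concentration inequality for the binomial distribution. Let $X$ be a binomial random variable with parameters $n$ and $1/2$, so that $\Pr(X = k) = \binom{n}{k}/2^n$. Then the left-hand side equals
\[
2^n \Pr\bigl(|X - n/2| \geq n^{2/3}\bigr).
\]

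Writing $X = \sum_{i=1}^n X_i$ as a sum of $n$ independent Bernoulli$(1/2)$ random variables, Hoeffding's inequality gives $\Pr(|X - n/2| \geq t) \leq 2\exp(-2t^2/n)$. Substituting $t = n^{2/3}$ yields the bound $2\exp(-2n^{1/3})$. Multiplying by $2^n$ establishes the first inequality:
\[
\sum_{\substack{k\in\{0,\ldots,n\}\\ |k-n/2|\geq n^{2/3}}} \binom{n}{k} \;\leq\; 2^{n+1}\exp(-2n^{1/3}) \;=\; 2^{n - \Omega(n^{1/3})}.
\]

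For the equivalent form on the right, I would use the elementary estimate $\binom{n}{\lfloor n/2\rfloor} \geq 2^n/(n+1)$, which follows because the central binomial coefficient is the largest of the $n+1$ binomial coefficients of order $n$, whose sum is $2^n$. Consequently,
\[
2^{n - \Omega(n^{1/3})} \;\leq\; (n+1)\cdot 2^{-\Omega(n^{1/3})}\binom{n}{\lfloor n/2\rfloor} \;=\; 2^{-\Omega(n^{1/3})}\binom{n}{\lfloor n/2\rfloor},
\]
where the polynomial factor $n+1$ is absorbed into the $2^{-\Omega(n^{1/3})}$ term, at the cost of slightly decreasing the implicit constant, since $\log(n+1) = o(n^{1/3})$.

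There is no real obstacle here — this is a textbook application of concentration. The only subtle point is ensuring that the polynomial factor arising from the lower bound on the central binomial coefficient does not swamp the exponential savings, which it does not because $n^{1/3}$ dominates $\log n$. One could alternatively argue directly by bounding the ratio $\binom{n}{k}/\binom{n}{\lfloor n/2\rfloor}$ as a telescoping product of factors $(\lceil n/2\rceil - j + 1)/(\lfloor n/2\rfloor + j)$, each bounded by $\exp(-(2j-1)/(n/2 + j))$, obtaining a saving of $\exp(-\Omega(m^2/n))$ at distance $m$ from the center and then summing; but the Chernoff route is cleaner.
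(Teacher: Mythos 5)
Your proof is correct, and it is the standard approach; note that the paper itself does not prove this lemma but simply cites it as Lemma~1 of Axenovich, Manske and Martin \cite{diamond_228}, so there is no in-paper proof to compare against. Your Hoeffding application is clean, and you correctly handle the only delicate point: converting $2^{n-\Omega(n^{1/3})}$ into $2^{-\Omega(n^{1/3})}\binom{n}{\lfloor n/2\rfloor}$ by noting that the polynomial loss in $\binom{n}{\lfloor n/2\rfloor}\ge 2^n/(n+1)$ is dwarfed by the exponential gain since $\log(n+1)=o(n^{1/3})$. The alternative telescoping-ratio argument you sketch at the end is also the route one often sees in print for such tail bounds on binomial coefficients, but your Chernoff version is shorter and equally rigorous.
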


By this lemma, the number of sets in $\F$ in the top and bottom $n':=n/2-n^{\frac{2}{3}}$
levels is $o(1)\binom{n}{\left\lfloor n/2\right\rfloor }$, so, since
we are bounding the cardinality of $\F$, we may assume that those
levels do not contain any set from $\F$.
\begin{nameddef}[Notation]
For $c\in[0,1]$, let $\tilde{c}=\min\!\left(c,\frac{1}{4}\right)$,
and let $f(c)=1-\tilde{c}+\sqrt{\tilde{c}}$. (This is equal to $f(0,c)$
as defined in Definition \ref{def:functions}.) For $A\in\F$, recall
that $\left[A,[n]\right]$ denotes the Boolean lattice $\left\{ S\subseteq[n]:A\subseteq S\right\} $.
A chain of this lattice is of the form $A\subset A_{\left|A\right|+1}\subset A_{\left|A\right|+2}\subset\ldots\subset A_{n-1}\subset[n]$.
(When saying just ``chain'', we continue to mean a maximal chain
in the Boolean lattice $2^{[n]}$.) Let 
\[
c(A)=\frac{1}{\left(n-\left|A\right|\right)!}\#\left\{ \begin{gathered}\C\textnormal{ is a chain in \ensuremath{\left[A,[n]\right]}}:\\
\C\textnormal{ is MNM w.r.t.\ }\F\cap\left[A,[n]\right]
\end{gathered}
\right\} .
\]
 Further, we can assume without loss of generality that 
\[
C:=\frac{1}{n!}\#\left\{ \begin{gathered}\C\textnormal{ is a chain: \ensuremath{\C\cap\F=\emptyset} or }\\
\min(\C\cap\F)\textnormal{ is not minimal in }\F
\end{gathered}
\right\} \geq\frac{1}{n!}\#\left\{ \begin{gathered}\C\textnormal{ is a chain: \ensuremath{\C\cap\F=\emptyset} or }\\
\C\textnormal{ is MNM w.r.t.\ }\F
\end{gathered}
\right\} .
\]
\end{nameddef}

(If this does not hold, we can replace $\F$ with $\left\{ [n]\setminus A:A\in\F\right\} $:
this family is diamond-free, has the same cardinality, and the opposite
inequality holds.) Clearly $C\geq\dfrac{1}{n!}\#\left\{ \begin{gathered}\C\textnormal{ is a chain:}\\
\C\textnormal{ is MNM w.r.t.\ }\F
\end{gathered}
\right\} $.

\[
l(\F)=\frac{1}{n!}\sum_{\C\textnormal{ is a chain}}\#(\C\cap\F)=\frac{1}{n!}\sum_{A\in\F}\sum_{\substack{\C\textnormal{ is a chain}\\
A=\min(\C\cap\F)
}
}\#(\C\cap\F),
\]
 since each chain $\C$ will be counted when $A=\min(\C\cap\F)$ \textendash{}
except if $\C\cap\F=\emptyset$, but then $\#(\C\cap\F)=0$. Continuing,
\[
l(\F)=\frac{1}{n!}\sum_{\substack{A\in\F\\
\exists\textnormal{ a chain \ensuremath{\C}: }A=\min(\C\cap\F)
}
}\#\left\{ \begin{gathered}\C\textnormal{ is a chain containing }A:\\
A=\min(\C\cap\F)
\end{gathered}
\right\} \frac{{\displaystyle \sum_{\substack{\C\textnormal{ is a chain}\\
A=\min(\C\cap\F)
}
}\#(\C\cap\F})}{\#\left\{ \begin{gathered}\C\textnormal{ is a chain containing }A:\\
A=\min(\C\cap\F)
\end{gathered}
\right\} }.
\]
 Each chain on $\left[A,[n]\right]$ can be extended to a full $2^{[n]}$-chain
in $\left|A\right|!$ ways. Furthermore, the Boolean lattice $\left[A,[n]\right]$
can be made equivalent to the Boolean lattice $2^{[n]\setminus A}$
by subtracting $A$ from each set; for $\A\subseteq\left[A,[n]\right]$,
we denote $\A-A=\left\{ S\setminus A:S\in\A\right\} $. If $A=\min(\C\cap\F)$,
$\#(\C\cap\F)=\#(\C\cap\left[A,[n]\right]\cap\F)$. If $A$ is minimal
in $\F$ (that is, on every chain), 
\begin{gather*}
\frac{{\displaystyle \sum_{\substack{\C\textnormal{ is a chain}\\
A=\min(\C\cap\F)
}
}\#(\C\cap\F)}}{\#\left\{ \begin{gathered}\C\textnormal{ is a chain containing }A:\\
A=\min(\C\cap\F)
\end{gathered}
\right\} }=\frac{\left|A\right|!{\displaystyle \sum_{\C\textnormal{ is a chain in }\left[A,[n]\right]}\#(\C\cap\F\cap\left[A,[n]\right])}}{\left|A\right|!\left(n-\left|A\right|\right)!}\\
=\frac{{\displaystyle \sum_{\C\textnormal{ is a chain in }2^{[n]\setminus A}}\#(\C\cap((\F\cap\left[A,[n]\right])-A))}}{\left(n-\left|A\right|\right)!}=l(n-\left|A\right|,(\F\cap\left[A,[n]\right])-A).
\end{gather*}
 $(\F\cap\left[A,[n]\right])-A$ is diamond-free, so $((\F\cap\left[A,[n]\right])-A)\setminus\emptyset$
is $\Lambda$-free; and the top $n'$ levels are assumed to be empty.
Using Lemma \ref{lem:lubell} as well as that $\frac{1}{n'}=\frac{1}{\Omega(n)}=o(1)$
and the subadditivity of the square root function,
\begin{gather*}
l\bigl(n-\left|A\right|,\bigl((\F\cap\left[A,[n]\right])-A\bigr)\setminus\emptyset\bigr)\leq1-\min\!\left(c(A)+\frac{1}{n'},\frac{1}{4}\right)+\sqrt{\min\!\left(c(A)+\frac{1}{n'},\frac{1}{4}\right)}+\frac{3}{n'}\\
\leq f(c(A))+\sqrt{\frac{1}{n'}}+\frac{3}{n'}=f(c(A))+o(1),
\end{gather*}
so $l\bigl(n-\left|A\right|,(\F\cap\left[A,[n]\right])-A\bigr)\leq1+f(c(A))+o(1)$.
Whereas if $A$ is not minimal in $\F$, i.e. $\exists S\in\F$ such
that $A\supsetneqq S$, then for any chain $\C$ for which $\min(\C\cap\F)=A$,
we have $\#(\C\cap\F)\leq2$ (otherwise $S$ and three sets in $\C\cap\F$
would form a diamond), so $\frac{\sum_{\substack{\C\textnormal{ is a chain}\\
A=\min(\C\cap\F)
}
}\#(\C\cap\F)}{\#\left\{ \substack{\C\textnormal{ is a chain through }A:\\
A=\min(\C\cap\F)
}
\right\} }\leq2$.

\begin{align*}
l(\F) & \leq\frac{1}{n!}\sum_{\substack{A\in\F\\
A\textnormal{ is minimal in }\F
}
}\#\left\{ \C\textnormal{ is a chain containing }A\right\} \left(1+f(c(A))+o(1)\right)\\
 & {}+\frac{1}{n!}\sum_{\substack{A\in\F\\
A\textnormal{ is not minimal in }\F
}
}\#\left\{ \begin{gathered}\C\textnormal{ is a chain containing }A:\\
A=\min(\C\cap\F)
\end{gathered}
\right\} \cdot2\\
 & \le2+\frac{1}{n!}\sum_{\substack{A\in\F\\
A\textnormal{ is minimal in }\F
}
}\#\left\{ \C\textnormal{ is a chain containing }A\right\} \left(f\left(c\left(A\right)\right)-1+o(1)\right).
\end{align*}
 Since $f$ is concave, we can use Jensen's inequality with the weights
$\frac{\#\left\{ \C\textnormal{ is a chain containing }A\right\} }{(1-C)n!}$
(where $A$ is minimal in $\F$). Notice that the sum of all the weights
is $1$ because the sum of numerators is the total number of chains
$\C$ where $\min(\C\cap\F)$ is minimal in $\F$, that is, $(1-C)n!$.
\begin{gather*}
l(\F)\leq2+(1-C)\left(f\!\left(\frac{{\displaystyle \sum_{\substack{A\in\F\\
A\textnormal{ is minimal in }\F
}
}\#\left\{ \C\textnormal{ is a chain containing }A\right\} c(A)}}{(1-C)n!}\right)-1+o(1)\right).
\end{gather*}
 $c(A)$ is the fraction of the chains containing $A$ which are MNM,
so $\#\left\{ \C\textnormal{ is a chain containing }A\right\} c(A)$
is the number of MNM chains through $A$. In the numerator, each MNM
chain in the whole Boolean lattice is counted once, except if the
minimal element on it is not a global minimal, then it is not counted.
So the numerator is less than or equal to the total number of MNM
chains in the Boolean lattice, which is at most $Cn!$. Substituting,
we get 
\[
l(\F)\leq2+(1-C)\left(f\!\left(\frac{n!C}{n!(1-C)}\right)-1+o(1)\right)=1+C+(1-C)f\!\left(\frac{C}{1-C}\right)+o(1).
\]
 $C$ varies between 0 and 1. $\frac{C}{1-C}$ is increasing in $C$.
Above $\frac{C}{1-C}=\frac{1}{4}$ (corresponding to $C=\frac{1}{5}$),
$f(\frac{C}{1-C})$ is constant $\frac{5}{4}$, so $1+C+(1-C)f\!\left(\frac{C}{1-C}\right)=\frac{9}{4}-\frac{C}{4}$
is decreasing in C. So it is enough to take the maximum in the interval
$\left[0,\frac{1}{5}\right]$: 
\begin{align*}
\frac{\left|\F\right|}{\binom{n}{\left\lfloor n/2\right\rfloor }} & \leq l(\F)\leq\max_{C\in\left[0,\frac{1}{5}\right]}\left(1+C+(1-C)\left(1-\frac{C}{1-C}+\sqrt{\frac{C}{1-C}}\right)\right)+o(1)\\
 & =\frac{\sqrt{2}+3}{2}+o(1)<2.20711+o(1).
\end{align*}

\section*{Acknowledgments\phantomsection\addcontentsline{toc}{section}{Acknowledgements}}

We thank the anonymous referees for their detailed comments which
helped improve the presentation of our paper. We would also like to
thank D\"om\"ot\"or P\'alv\"olgyi for helpful discussions concerning
the $3$-level version of this problem. The research of the second
and third authors was supported by the National Research, Development
and Innovation Office \textendash{} NKFIH, grant K 116769.

\bibliographystyle{plain}
\phantomsection\addcontentsline{toc}{section}{\refname}\bibliography{bibliography1}

\newcounter{equationsave}
\setcounter{equationsave}{\value{equation}}

\appendix
\counterwithout{equation}{section}
\setcounter{equation}{\value{equationsave}}\renewcommand\thesection{Appendix \Alph{section}}

\section{Proof of Lemma \ref{lem:functions}}

\makeatletter\def\@currentlabel{\Alph{section}}\makeatother \label{sec:functions_proof}
Points \ref{enu:x=00003D0} and \ref{enu:1-x} are easy to see.
\begin{boldproof}[Proof of Point \ref{enu:concave}.]
It is also easy to check that $f$ is continuous at the points $x=\frac{1}{2}$,
$c=4\left(x-x^{2}\right)^{2}$, $c=\frac{1}{4}$, and that the function
is monotonously decreasing in $x$ and increasing in $c$ in each
range.

$f(x,0)=1-x$; $x^{2}-2x+1-c+\sqrt{c}$ is a concave and monotonously
increasing expression in $c$. When $0<x\leq\frac{1}{2}$, $c\mapsto1-x+\left(\frac{1}{4\left(x-x^{2}\right)}-1\right)c$
is the tangential line of the graph of $c\mapsto x^{2}-2x+1-c+\sqrt{c}$
at the point $c=4\left(x-x^{2}\right)^{2}$, since both their values,
and their derivatives at this point coincide. So $f$ is concave in
$c$.
\end{boldproof}
Since the graph of a concave function is below the tangent line at
any point, we also have that for $x\in\left[0,\frac{1}{2}\right],c\in\left[0,\frac{1}{4}\right]$,
\begin{equation}
f(x,c)\geq x^{2}-2x+1-c+\sqrt{c}=:\tilde{f}(x,c).\label{eq:fbound}
\end{equation}
We will use this inequality in the proof of Point \ref{enu:g}\ and
\ref{enu:h}.

\begin{boldproof}[Proof of Point \ref{enu:g}.]
If $x+a=0$, $g(x,c,a,\tilde{a})=f(x,c)$. From now on, we assume
that $x+a>0.$

We first show that $g$ is monotonously increasing in $\tilde{a}$.
\begin{equation}
\begin{gathered}\left(\frac{\partial}{\partial c}f\right)\!(x,c)=\left(\begin{cases}
\frac{1}{4\left(x-x^{2}\right)}-1 & \tif x\leq\frac{1}{2}\tand c<4\left(x-x^{2}\right)^{2}\\
-1+\frac{1}{2\sqrt{c}} & \tif x\leq\frac{1}{2}\tand4\left(x-x^{2}\right)^{2}\leq c\leq\frac{1}{4}\\
0 & \tif\frac{1}{4}\leq c\tor\frac{1}{2}\leq x
\end{cases}\right)\leq\begin{cases}
\frac{1}{4\left(x-x^{2}\right)}-1 & \tif x\leq\frac{1}{2}\\
0 & \tif\frac{1}{2}\leq x
\end{cases}\end{gathered}
\label{eq:particalcforpoint4}
\end{equation}
So, 
\begin{gather*}
\frac{\partial}{\partial\tilde{a}}g\left(x,c,a,\tilde{a}\right)=2(1-x-a)+(1-x-a)\cdot\left(\frac{\partial}{\partial c}f\right)\!\left(x+a,\frac{c-\tilde{a}(x+a)}{1-x-a}\right)\cdot\frac{\partial}{\partial\tilde{a}}\!\left(\frac{c-\tilde{a}(x+a)}{1-x-a}\right)\\
\geq2(1-x-a)-(x+a)\left(\begin{cases}
\frac{1}{4\left((x+a)-(x+a)^{2}\right)}-1 & \tif x+a\leq\frac{1}{2}\\
0 & \tif\frac{1}{2}\leq x+a
\end{cases}\right)\\
\begin{gathered}\geq2(1-x-a)-(x+a)\left(\begin{cases}
\frac{1}{4\cdot\frac{1}{2}(x+a)}-1 & \tif x+a\leq\frac{1}{2}\\
0 & \tif\frac{1}{2}\leq x+a
\end{cases}\right)\geq2(1-x-a)-\left(\begin{cases}
\frac{1}{2} & \tif x+a\leq\frac{1}{2}\\
0 & \tif\frac{1}{2}\leq x+a
\end{cases}\right)\geq0.\end{gathered}
\end{gather*}
 Therefore, from now on we assume $\tilde{a}=\min\!\left(a,\frac{c}{x+a}\right)$
since if $g(x,c,a,\tilde{a})\le f(x,c)$ holds for $\tilde{a}=\min\!\left(a,\frac{c}{x+a}\right)$
then it also holds for any $\tilde{a}\in\left[0,\min\!\left(a,\frac{c}{x+a}\right)\right].$
\gdef\labelwidthi{\widthof{\textbf{Case 0. }}}
\gdef\labelwidthii{\widthof{\textbf{Case 0.0. }}}
\gdef\labelwidthiii{\widthof{\textbf{Case 0.0.0. }}}
\gdef\labelwidthiv{\widthof{\textbf{Case 0.0.0.0. }}}
\gdef\labeli{\textbf{Case \arabic{enumi}. }}
\gdef\labelii{\textbf{Case \arabic{enumi}.\arabic{enumii}. }}
\gdef\labeliii{\textbf{Case \arabic{enumi}.\arabic{enumii}.\arabic{enumiii}. }}
\gdef\labeliv{\textbf{Case \arabic{enumi}.\arabic{enumii}.\arabic{enumiii}.\arabic{enumiv}. }}
\gdef\refi{\arabic{enumi}.}
\gdef\refii{\arabic{enumi}.\arabic{enumii}.}
\gdef\refiii{\arabic{enumi}.\arabic{enumii}.\arabic{enumiii}.}
\gdef\refiv{\arabic{enumi}.\arabic{enumii}.\arabic{enumiii}.\arabic{enumiv}.}

\begin{enumerate}[label=\labeli, ref=\refi, labelsep=0em, leftmargin=0em, labelwidth=\labelwidthi, itemindent=\labelwidth, align=left]
\item \label{case:g:a=00003Datilde}First assume $a\leq\frac{c}{x+a}$
(so \foreignlanguage{english}{$\tilde{a}=a$}), which is equivalent
to $a(x+a)\leq c$ or $a\leq\frac{-x+\sqrt{x^{2}+4c}}{2}$. Let $x'=x+a$
and $c'=\frac{c-a(x+a)}{1-x-a}$.

\begin{enumerate}[label=\labelii, ref=\refii, labelsep=0em, leftmargin=1em, labelwidth=\labelwidthii, itemindent=\labelwidth, align=left]
\item \label{case:g:smallcprime}When $x'\leq\frac{1}{2}$ and $4{\bigl(x'-{x'}^{2}\bigr)}^{2}\leq c'$,
we bound $g$ from above: 
\begin{align*}
g(x,c,a,a) & =a+(1-x-a)f\!\left(x+a,\frac{c-a(x+a)}{1-x-a}\right)+2a(1-x-a)\\
 & \leq a+(1-x-a)f\!\left(x+a,\frac{c}{1-x-a}\right)+2a(1-x-a)=:\tilde{g}(x,c,a,a).
\end{align*}
 We now consider subcases based on the values of $c$ and $\frac{c}{1-x-a}$
compared to $\frac{1}{4}$. Note that $4{\bigl(x'-{x'}^{2}\bigr)}^{2}\leq c'\leq\frac{c}{1-x-a}$.

\begin{enumerate}[label=\labeliii, ref=\refiii, labelsep=0em, leftmargin=1em, labelwidth=\labelwidthiii, itemindent=\labelwidth, align=left]
\item \label{case:g:smallc}When $c\leq\frac{c}{1-x-a}\leq\frac{1}{4}$,
using \eqref{eq:fbound}, 
\begin{equation}
\begin{gathered}g(x,c,a,a)-f(x,c)\leq\tilde{g}(x,c,a,a)-\tilde{f}(x,c)=a+(1-x-a)\biggl((x+a)^{2}-2(x+a)+1\\
{}-\frac{c}{1-x-a}+\frac{\sqrt{c}}{\sqrt{1-x-a}}\biggr)+2a(1-x-a)-\left(x^{2}-2x+1-c+\sqrt{c}\right)\\
=-x(1-x-2a)+(1-x-a)(x+a)^{2}+\left(\sqrt{1-x'}-1\right)\sqrt{c}.
\end{gathered}
\label{eq:g^-f^}
\end{equation}
Thus, 
\[
\frac{\partial}{\partial c}\!\left(\tilde{g}(x,c,a,a)-\tilde{f}(x,c)\right)=\frac{\sqrt{1-x-a}-1}{2\sqrt{c}}\leq0.
\]
 So it is enough to check that $\tilde{g}(x,c,a,a)-\tilde{f}(x,c)\leq0$
when $c'=4{\bigl(x'-{x'}^{2}\bigr)}^{2}$ or, equivalently, when $c=4{\bigl(x'-{x'}^{2}\bigr)}^{2}\left(1-x'\right)+a(x+a)$;
then it is also $\leq0$ for bigger $c$. First some auxiliary calculations:
\begin{equation}
\sqrt{1-x'}-1\leq\sqrt{1-x'+\frac{{x'}^{2}}{4}}-1=-\frac{x'}{2}\leq0.\label{eq:aux:sqrtc coeff}
\end{equation}
\[
4{\bigl(x'-{x'}^{2}\bigr)}^{2}\leq4{x'}^{2}\quad\textnormal{and}\quad4{\bigl(x'-{x'}^{2}\bigr)}^{2}\leq4\cdot\left(\frac{1}{4}\right)^{2}=\frac{1}{4},\quad\textnormal{so}
\]
\begin{equation}
\begin{gathered}4{\bigl(x'-{x'}^{2}\bigr)}^{2}\leq\min\!\left(4{x'}^{2},\frac{1}{4}\right)\leq x'\end{gathered}
.\label{eq:aux_x'}
\end{equation}
So,
\begin{equation}
\begin{gathered}4{\bigl(x'-{x'}^{2}\bigr)}^{2}\left(1-x'\right)+a(x+a)\geq4{\bigl(x'-{x'}^{2}\bigr)}^{2}\left(1-x'\right)+4{\bigl(x'-{x'}^{2}\bigr)}^{2}a\\
={\left(2\bigl(x'-{x'}^{2}\bigr)\right)}^{2}(1-x)\geq{\left(2\bigl(x'-{x'}^{2}\bigr)\right)}^{2}(1-x)^{2}.
\end{gathered}
\label{eq:aux:cbound}
\end{equation}
 Putting $c=4{\bigl(x'-{x'}^{2}\bigr)}^{2}\left(1-x'\right)+a(x+a)$
in \eqref{eq:g^-f^} and then using \eqref{eq:aux:sqrtc coeff} and
\eqref{eq:aux:cbound}, 
\begin{align*}
\tilde{g}(x,c,a,a)-\tilde{f}(x,c) & =-x(1-x-2a)+(1-x-a)(x+a)^{2}\\
 & {}+\left(\sqrt{1-x'}-1\right)\sqrt{4{\bigl(x'-{x'}^{2}\bigr)}^{2}\left(1-x'\right)+a(x+a)}\displaybreak[0]\\
 & \leq-x(1-x-2a)+(1-x-a)(x+a)^{2}-x'\bigl(x'-{x'}^{2}\bigr)(1-x)\displaybreak[0]\\
 & =-x\left[(1-x-2a)-(1-x-a)(x+a)^{2}\right]\leq-x\left[(1-x-2a)-(1-x-a)\frac{1}{4}\right]\\
 & =-x\left(\frac{3-3x-7a}{4}\right),
\end{align*}
 which is $\leq0$ when $a\leq\frac{3-3x-3a}{4}=\frac{3-3x'}{4}$.
Assume $a>\frac{3-3x'}{4}$. Since $x'\geq a$, $x'>\frac{3}{7}$
(and we have also assumed $\frac{1}{2}\geq x'$); and 
\begin{align*}
c & =4{\bigl(x'-{x'}^{2}\bigr)}^{2}\left(1-x'\right)+a(x+a)>4{\bigl(x'-{x'}^{2}\bigr)}^{2}\left(1-x'\right)+\frac{(3-3x')x'}{4}\\
 & >4\cdot{\left(\frac{3}{7}-\left(\frac{3}{7}\right)^{2}\right)}^{2}\left(1-\frac{1}{2}\right)+\frac{\left(3-3\cdot\frac{1}{2}\right)\cdot\frac{3}{7}}{4}=\frac{5391}{19208}>\frac{1}{4}
\end{align*}
 contrary to our assumption that $c\leq\frac{c}{1-x-a}\leq\frac{1}{4}$.
\item \label{case:g:midc}When $c\leq\frac{1}{4}<\frac{c}{1-x-a}$ (and
recall $4{\bigl(x'-{x'}^{2}\bigr)}^{2}\left(1-x'\right)+a(x+a)\leq c$),
using \eqref{eq:fbound},
\begin{equation}
\begin{aligned}g(x,c,a,a)-f(x,c) & \leq\tilde{g}(x,c,a,a)-\tilde{f}(x,c)=a+(1-x-a)\left((x+a)^{2}-2(x+a)+1.25\right)\\
 & {}+2a(1-x-a)-\left(x^{2}-2x+1-c+\sqrt{c}\right)\\
 & =-x(1-x-2a)+(1-x-a)\left((x+a)^{2}+0.25\right)+c-\sqrt{c}.
\end{aligned}
\label{eq:auxcase112}
\end{equation}

\begin{enumerate}[label=\labeliv, ref=\refiv, labelsep=0em, leftmargin=1em, labelwidth=\labelwidthiv, itemindent=\labelwidth, align=left]
\item If $4{\bigl(x'-{x'}^{2}\bigr)}^{2}\left(1-x'\right)+a(x+a)\leq\frac{1}{4}(1-x-a)$
(which is $<c$), 

since $t-\sqrt{t}$ is decreasing in $0\le t\le\frac{1}{4}$, replacing
$c$ by $\frac{1}{4}(1-x-a)$ in \eqref{eq:auxcase112}, we get
\begin{gather*}
g(x,c,a,a)-f(x,c)\leq-x(1-x-2a)+(1-x-a)\left((x+a)^{2}+0.25\right)+\frac{1}{4}(1-x-a)\\
{}-\sqrt{\frac{1}{4}(1-x-a)}=\tilde{g}\!\left(x,\frac{1}{4}(1-x-a),a,a\right)-\tilde{f}\!\left(x,\frac{1}{4}(1-x-a)\right)\leq0,
\end{gather*}
 as it falls in Case \ref{case:g:smallc} above.
\item If $\frac{1}{4}(1-x-a)\leq4{\bigl(x'-{x'}^{2}\bigr)}^{2}\left(1-x'\right)+a(x+a)$
(which is $\leq c$),%

again, by \eqref{eq:auxcase112} we have, 
\begin{equation}
\begin{gathered}g(x,c,a,a)-f(x,c)\leq-x(1-x-2a)+(1-x-a)\left((x+a)^{2}+0.25\right)\\
{}+4{\bigl(x'-{x'}^{2}\bigr)}^{2}\left(1-x'\right)+a(x+a)-\sqrt{4{\bigl(x'-{x'}^{2}\bigr)}^{2}\left(1-x'\right)+a(x+a)}.
\end{gathered}
\label{eq:aux0}
\end{equation}
Let $b=\max\!\left(4{\bigl(x'-{x'}^{2}\bigr)}^{2}\left(1-x'\right)+{x'}^{2}-\frac{1}{4},0\right)$.
Now some auxiliary calculations follow. Since $\frac{\mathrm{d}}{\mathrm{d}t}\sqrt{t}=\frac{1}{2\sqrt{t}}$,
and by \eqref{eq:aux:cbound}, 
\begin{equation}
\begin{gathered}\sqrt{4{\bigl(x'-{x'}^{2}\bigr)}^{2}\left(1-x'\right)+{x'}^{2}-b}-\sqrt{4{\bigl(x'-{x'}^{2}\bigr)}^{2}\left(1-x'\right)+a(x+a)}\\
\leq\frac{1}{2\sqrt{4{\bigl(x'-{x'}^{2}\bigr)}^{2}\left(1-x'\right)+a(x+a)}}\bigl(x(x+a)-b\bigr)\leq\frac{1}{4\left(x'-{x'}^{2}\right)(1-x)}\bigl(x(x+a)-b\bigr).
\end{gathered}
\label{eq:aux1}
\end{equation}
\[
(1-a)\cdot4\left(x'-{x'}^{2}\right)(1-x)\geq4\left(x'-{x'}^{2}\right)(1-x')=4{(1-x')}{}^{2}x'\geq4\cdot\left(\frac{1}{2}\right)^{2}x'=x+a.
\]
So,
\begin{equation}
\begin{gathered}-x(1-x-2a)+\left(\frac{1}{4\left(x'-{x'}^{2}\right)(1-x)}-1\right)\bigl(x(x+a)-b\bigr)\\
\leq-x(1-x-2a)+\left(\frac{1-a}{x+a}-1\right)x(x+a)=0.
\end{gathered}
\label{eq:aux2}
\end{equation}
By \eqref{eq:aux_x'}, $4{\bigl(x'-{x'}^{2}\bigr)}^{2}\leq x'$, so
\begin{equation}
4{\bigl(x'-{x'}^{2}\bigr)}^{2}\left(1-x'\right)+{x'}^{2}\geq4{\bigl(x'-{x'}^{2}\bigr)}^{2}\left(1-x'\right)+4{\bigl(x'-{x'}^{2}\bigr)}^{2}x'={\left(2\bigl(x'-{x'}^{2}\bigr)\right)}^{2}.\label{eq:aux:h cbound}
\end{equation}
\begin{gather}
4{\bigl(x'-{x'}^{2}\bigr)}^{2}\left(1-x'\right)+{x'}^{2}-b\geq{\left(2\bigl(x'-{x'}^{2}\bigr)\right)}^{2},\label{eq:aux3}
\end{gather}
since if $b>0$, $4{\bigl(x'-{x'}^{2}\bigr)}^{2}\left(1-x'\right)+{x'}^{2}-b=\frac{1}{4}\geq{\left(2\bigl(\frac{1}{4}-{\left(\frac{1}{2}-x'\right)}^{2}\bigr)\right)}^{2}={\left(2\bigl(x'-{x'}^{2}\bigr)\right)}^{2}$
(if b = 0, then it holds by \eqref{eq:aux:h cbound}). Now using \eqref{eq:aux1}
in \eqref{eq:aux0}, and then using \eqref{eq:aux2} and \eqref{eq:aux3}
and that $t-\sqrt{t}$ is decreasing in $0\le t\le1/4$, we get
\begin{gather*}
g(x,c,a,a)-f(x,c)\leq(1-x-a)\left((x+a)^{2}+0.25\right)+4{\bigl(x'-{x'}^{2}\bigr)}^{2}\left(1-x'\right)+{x'}^{2}-b\\
{}-\sqrt{4{\bigl(x'-{x'}^{2}\bigr)}^{2}\left(1-x'\right)+{x'}^{2}-b}-x(1-x-2a)+\left(\frac{1}{4\left(x'-{x'}^{2}\right)(1-x)}-1\right)\bigl(x(x+a)-b\bigr)\\
\leq(1-x-a)\left((x+a)^{2}+0.25\right)+{\left(2\bigl(x'-{x'}^{2}\bigr)\right)}^{2}-2\bigl(x'-{x'}^{2}\bigr)\\
=4\left(x'-\frac{1}{4}\right)\left(x'-\frac{1}{2}\right)^{2}\left(x'-1\right),
\end{gather*}
 which is $\leq0$ when $\frac{1}{4}\leq x'\leq1$. When $x'<\frac{1}{4}$,
we show that this subcase cannot hold: 
\begin{equation}
\begin{gathered}4{\bigl(x'-{x'}^{2}\bigr)}^{2}\left(1-x'\right)+a(x+a)-\frac{1}{4}(1-x-a)<\left(4\cdot\left(\frac{1}{4}-\left(\frac{1}{4}\right)^{2}\right)^{2}-\frac{1}{4}\right)(1-x')+{x'}^{2}\\
=-\frac{7}{64}(1-x')+{x'}^{2}<-\frac{1}{12}(1-x')+{x'}^{2}=\left(x'-\frac{1}{4}\right)\left(x'+\frac{1}{3}\right)<0.
\end{gathered}
\label{eq:indirect}
\end{equation}
\end{enumerate}
\item When $\frac{1}{4}\leq c$ (which is $<\frac{c}{1-x-a}$), 
\begin{align*}
g(x,c,a,a)-f(x,c) & \leq\tilde{g}(x,c,a,a)-f(x,c)=a+(1-x-a)\left((x+a)^{2}-2(x+a)+1.25\right)\\
 & {}+2a(1-x-a)-\left(x^{2}-2x+1.25\right)=\tilde{g}\!\left(x,\frac{1}{4},a,a\right)-\tilde{f}\!\left(x,\frac{1}{4}\right)\leq0,
\end{align*}
 as it falls in Case \ref{case:g:smallc} or \ref{case:g:midc} above.
\end{enumerate}
\item When $x'\leq\frac{1}{2}$ and $c'\leq4{\bigl(x'-{x'}^{2}\bigr)}^{2}$,
\[
g(x,c,a,a)=a+(1-x-a)\left(1-x'+\left(\frac{1}{4\left(x'-{x'}^{2}\right)}-1\right)c'\right)+2a(1-x-a),
\]
 which is linear in $c'$, so also in $c$. $f(x,c)$ is concave in
$c$, so it is enough to check that $g$ is smaller than $f$ in the
ends of the interval $c\in\left[a(a+x),4{\bigl(x'-{x'}^{2}\bigr)}^{2}\left(1-x'\right)+a(x+a)\right]$:
\begin{align*}
g\!\left(x,a(a+x),a,a\right) & =a+(1-x-a)^{2}+2a(1-x-a)\\
 & \leq a+(1-x-a)^{2}+2a(1-x-a)+\frac{2ax\left(\frac{1}{2}-x-a+\left(1-\frac{a}{2}\right)x\right)}{\sqrt{a(x+a)}+ax+a}\\
 & =x^{2}-2x+1-a(x+a)+\sqrt{a(x+a)}\leq f\bigl(x,a(x+a)\bigr),
\end{align*}
 since $\frac{1}{2}-x-a\geq0$ and $a(x+a)\leq{x'}^{2}\leq\frac{1}{4}$,
and using \eqref{eq:fbound}. Whereas the higher end of the interval
was handled above in Case \ref{case:g:smallcprime} since $f$ is
continuous. 
\item Finally, when $\frac{1}{2}\leq x'$, 
\[
g(x,c,a,a)=a+(1-x-a)^{2}+2a(1-x-a)=x^{2}-2x+1-a^{2}+a.
\]
If $x\leq\frac{1}{2}$, let $\tilde{c}=\min\!\left(c,\frac{1}{4}\right)$.
Since $c\geq a(x+a)\geq a^{2}$, and by \eqref{eq:fbound} 
\[
g(x,c,a,a)=x^{2}-2x+1-a^{2}+a\leq x^{2}-2x+1-\tilde{c}+\sqrt{\tilde{c}}\leq f(x,c).
\]
If $\frac{1}{2}\leq x$, then $a\leq1-x\leq\frac{1}{2}$. $-a^{2}+a$
is monotonously increasing in $a\in\left[0,\frac{1}{2}\right]$, so
\[
g(x,c,a,a)=x^{2}-2x+1-a^{2}+a\leq x^{2}-2x+1-(1-x)^{2}+1-x=1-x\leq f(x,c),
\]
by Point \ref{enu:1-x}.
\end{enumerate}
\item Now consider $a\geq\frac{c}{x+a}$, that is, $a\geq\frac{-x+\sqrt{x^{2}+4c}}{2}$.
Then $\tilde{a}=\frac{c}{x+a}\leq\frac{c}{x+\frac{-x+\sqrt{x^{2}+4c}}{2}}=\frac{-x+\sqrt{x^{2}+4c}}{2}$,
and $\frac{c-\tilde{a}(x+a)}{1-x-a}=0$, so $f\!\left(x+a,\frac{c-\tilde{a}(x+a)}{1-x-a}\right)=1-x-a$.
\[
g(x,c,a,\tilde{a})=a+(1-x-a)^{2}+2\frac{c}{x+a}(1-x-a)\leq a+(1-x-a)^{2}+\left(-x+\sqrt{x^{2}+4c}\right)(1-x-a),
\]
 which is quadratic in $a$ with a positive leading coefficient, so
its maximum is at one end of the interval $\left[\frac{-x+\sqrt{x^{2}+4c}}{2},1-x\right]$.
$a=\frac{-x+\sqrt{x^{2}+4c}}{2}$ (i.e.,\ $a=\frac{c}{x+a}=\tilde{a}$)
was handled above in Case \ref{case:g:a=00003Datilde} If $a=1-x$,
the right side of the inequality equals $1-x$ which is $\leq f(x,c)$
by Point \ref{enu:1-x}.\qedhere
\end{enumerate}
\end{boldproof}

\begin{boldproof}[Proof of Point \ref{enu:h}.]
If $x+a=0$, $h(x,c,a,\tilde{a})=f(x,c)$. From now on, we assume
that $x+a>0.$

We first show that $h$ is monotonously increasing in $\tilde{a}$.
Using \eqref{eq:particalcforpoint4} and a calculation similar to
the one in the proof of Point \ref{enu:g}, we have 
\begin{gather*}
\frac{\partial}{\partial\tilde{a}}h\!\left(x,c,a,\tilde{a}\right)=2(1-x-a)+(1-x-a)\cdot\left(\frac{\partial}{\partial c}f\right)\!\left(x+a,\frac{c-(x+\tilde{a})(x+a)}{1-x-a}\right)\\
{}\cdot\frac{\partial}{\partial\tilde{a}}\!\left(\frac{c-(x+\tilde{a})(x+a)}{1-x-a}\right)\geq2(1-x-a)-(x+a)\left(\begin{cases}
\frac{1}{4\left((x+a)-(x+a)^{2}\right)}-1 & \tif x+a\leq\frac{1}{2}\\
0 & \tif\frac{1}{2}\leq x+a
\end{cases}\right)\geq0.
\end{gather*}

Therefore, from now on we assume $\tilde{a}=\min\!\left(a,\frac{c}{x+a}-x\right)$.

\begin{enumerate}[label=\labeli, ref=\refi, labelsep=0em, leftmargin=0em, labelwidth=\labelwidthi, itemindent=\labelwidth, align=left]
\item \label{case:h:a=00003Datilde}First assume $a\leq\frac{c}{x+a}-x$
(so \foreignlanguage{english}{$\tilde{a}=a$}), which is equivalent
to $(x+a)^{2}\leq c$ or $a\leq\sqrt{c}-x$. Let $x'=x+a$ and $c'=\frac{c-(x+a)^{2}}{1-x-a}$.

\begin{enumerate}[label=\labelii, ref=\refii, labelsep=0em, leftmargin=1em, labelwidth=\labelwidthii, itemindent=\labelwidth, align=left]
\item \label{case:h:smallcprime}When $x'\leq\frac{1}{2}$ and $4{\bigl(x'-{x'}^{2}\bigr)}^{2}\leq c'$,
we bound $h$ from above: 
\begin{gather*}
h(x,c,a,a)=a+(1-x-a)f\!\left(x+a,\frac{c-(x+a)^{2}}{1-x-a}\right)+2a(1-x-a)+x-3x(x+a)\\
\leq a+(1-x-a)f\!\left(x+a,\frac{c}{1-x-a}\right)+2a(1-x-a)+x-3x(x+a)=:\tilde{h}(x,c,a,a).
\end{gather*}
We now consider subcases based on the values of $c$ and $\frac{c}{1-x-a}$
compared to $\frac{1}{4}$. Note that $4{\bigl(x'-{x'}^{2}\bigr)}^{2}\leq c'\leq\frac{c}{1-x-a}$.

\begin{enumerate}[label=\labeliii, ref=\refiii, labelsep=0em, leftmargin=1em, labelwidth=\labelwidthiii, itemindent=\labelwidth, align=left]
\item \label{case:h:smallc}When $c\le\frac{c}{1-x-a}\leq\frac{1}{4}$,
using \eqref{eq:fbound}, 
\begin{equation}
\begin{gathered}h(x,c,a,a)-f(x,c)\leq\tilde{h}(x,c,a,a)-\tilde{f}(x,c)=a+(1-x-a)\biggl((x+a)^{2}-2(x+a)+1\\
{}-\frac{c}{1-x-a}+\frac{\sqrt{c}}{\sqrt{1-x-a}}\biggr)+2a(1-x-a)+x-3x(x+a)-\left(x^{2}-2x+1-c+\sqrt{c}\right)\\
\begin{gathered}=-2x^{2}-xa+(1-x-a)(x+a)^{2}+\left(\sqrt{1-x'}-1\right)\sqrt{c}.\end{gathered}
\end{gathered}
\label{eq:hputtingc}
\end{equation}
\[
\frac{\partial}{\partial c}\!\left(\tilde{h}(x,c,a,a)-\tilde{f}(x,c)\right)=\frac{\sqrt{1-x-a}-1}{2\sqrt{c}}\leq0.
\]
 So it is enough to check that $\tilde{h}(x,c,a,a)-\tilde{f}(x,c)\leq0$
when $c'=4{\bigl(x'-{x'}^{2}\bigr)}^{2}$ or, equivalently, when $c=4{\bigl(x'-{x'}^{2}\bigr)}^{2}\left(1-x'\right)+{x'}^{2}$;
then it is also $\leq0$ for bigger $c$. As seen in \eqref{eq:aux:h cbound}
and \eqref{eq:aux:sqrtc coeff} in the proof of Point \ref{enu:g},
$4{\bigl(x'-{x'}^{2}\bigr)}^{2}\left(1-x'\right)+{x'}^{2}\geq{\left(2\bigl(x'-{x'}^{2}\bigr)\right)}^{2}$,
and $\sqrt{1-x'}-1\leq-\frac{x'}{2}\leq0$. Putting $c=4{\bigl(x'-{x'}^{2}\bigr)}^{2}\left(1-x'\right)+{x'}^{2}$
in \eqref{eq:hputtingc} and using these inequalities, we get 
\begin{align*}
\tilde{h}(x,c,a,a)-\tilde{f}(x,c) & =-2x^{2}-xa+(1-x-a)(x+a)^{2}\\
 & {}+\left(\sqrt{1-x'}-1\right)\sqrt{4{\bigl(x'-{x'}^{2}\bigr)}^{2}\left(1-x'\right)+{x'}^{2}}\\
 & \leq-2x^{2}-xa+(1-x-a)(x+a)^{2}-x'\bigl(x'-{x'}^{2}\bigr)=-2x^{2}-xa\leq0.
\end{align*}
\item \label{case:h:midc}When $c\leq\frac{1}{4}<\frac{c}{1-x-a}$ (and
recall ${\left(2\bigl(x'-{x'}^{2}\bigr)\right)}^{2}\leq4{\bigl(x'-{x'}^{2}\bigr)}^{2}\left(1-x'\right)+{x'}^{2}\leq c$),
using \eqref{eq:fbound}, 
\begin{equation}
\begin{aligned}h(x,c,a,a)-f(x,c) & \leq\tilde{h}(x,c,a,a)-\tilde{f}(x,c)=a+(1-x-a)\left((x+a)^{2}-2(x+a)+1.25\right)\\
 & {}+2a(1-x-a)+x-3x(x+a)-\left(x^{2}-2x+1-c+\sqrt{c}\right)\\
 & \begin{gathered}=-2x^{2}-xa+(1-x-a)\left((x+a)^{2}+0.25\right)+c-\sqrt{c}.\end{gathered}
\end{aligned}
\label{eq:hdecreasec}
\end{equation}

\begin{enumerate}[label=\labeliv, ref=\refiv, labelsep=0em, leftmargin=1em, labelwidth=\labelwidthiv, itemindent=\labelwidth, align=left]
\item If $4{\bigl(x'-{x'}^{2}\bigr)}^{2}\left(1-x'\right)+{x'}^{2}\leq\frac{1}{4}(1-x-a)$
(which is $<c$), 

since $t-\sqrt{t}$ is decreasing in $0\le t\le\frac{1}{4}$, replacing
$c$ by $\frac{1}{4}(1-x-a)$ in \eqref{eq:hdecreasec}, we get 
\begin{align*}
h(x,c,a,a)-f(x,c) & \leq-2x^{2}-xa+(1-x-a)\left((x+a)^{2}+0.25\right)+\frac{1}{4}(1-x-a)\\
 & {}-\sqrt{\frac{1}{4}(1-x-a)}=\tilde{h}\!\left(x,\frac{1}{4}(1-x-a),a,a\right)-\tilde{f}\!\left(x,\frac{1}{4}(1-x-a)\right)\leq0,
\end{align*}
 as it falls in Case \ref{case:h:smallc} above.
\item If $\frac{1}{4}(1-x-a)\leq4{\bigl(x'-{x'}^{2}\bigr)}^{2}\left(1-x'\right)+{x'}^{2}$
(which is $\leq c$),

again, by \eqref{eq:hdecreasec} (and since $-2x^{2}-xa\leq0$), we
get
\begin{align*}
h(x,c,a,a)-f(x,c) & \leq(1-x-a)\left((x+a)^{2}+0.25\right)+{\left(2\bigl(x'-{x'}^{2}\bigr)\right)}^{2}-2\bigl(x'-{x'}^{2}\bigr)\\
 & =4\left(x'-\frac{1}{4}\right)\left(x'-\frac{1}{2}\right)^{2}\left(x'-1\right),
\end{align*}
 which is $\leq0$ when $\frac{1}{4}\leq x'\leq1$. When $x'<\frac{1}{4}$,
we show that this subcase cannot hold: 
\[
4{\bigl(x'-{x'}^{2}\bigr)}^{2}\left(1-x'\right)+{x'}^{2}-\frac{1}{4}(1-x-a)<\left(4\cdot\left(\frac{1}{4}-\left(\frac{1}{4}\right)^{2}\right)^{2}-\frac{1}{4}\right)(1-x')+{x'}^{2}<0,
\]
like in \eqref{eq:indirect} in the proof of Point \ref{enu:g}.
\end{enumerate}
\item When $\frac{1}{4}\leq c$ (which is $<\frac{c}{1-x-a}$), 
\begin{gather*}
h(x,c,a,a)-f(x,c)\leq\tilde{h}(x,c,a,a)-f(x,c)=a+(1-x-a)\left((x+a)^{2}-2(x+a)+1.25\right)\\
{}+2a(1-x-a)+x-3x(x+a)-\left(x^{2}-2x+1.25\right)=\tilde{h}\!\left(x,\frac{1}{4},a,a\right)-\tilde{f}\!\left(x,\frac{1}{4}\right)\leq0,
\end{gather*}
 as it falls in Case \ref{case:h:smallc} or \ref{case:h:midc} above.
\end{enumerate}
\item When $x'\leq\frac{1}{2}$ and $c'\leq4{\bigl(x'-{x'}^{2}\bigr)}^{2}$,
\[
h(x,c,a,a)=a+(1-x-a)\left(1-x'+\left(\frac{1}{4\left(x'-{x'}^{2}\right)}-1\right)c'\right)+2a(1-x-a)+x-3x(x+a),
\]
 which is linear in $c'$, so also in $c$. $f(x,c)$ is concave in
$c$, so it is enough to check that $h$ is smaller than $f$ in the
ends of the interval $c\in\left[(x+a)^{2},4{\bigl(x'-{x'}^{2}\bigr)}^{2}\left(1-x'\right)+{x'}^{2}\right]$:
\begin{align*}
h\!\left(x,(x+a)^{2},a,a\right) & =a+(1-x-a)^{2}+2a(1-x-a)+x-3x(x+a)\\
 & \leq a+(1-x-a)^{2}+2a(1-x-a)+x-3x(x+a)+x(2x+a)\\
 & =x^{2}-2x+1-(x+a)^{2}+(x+a)\leq f\!\left(x,(x+a)^{2}\right)
\end{align*}
 since $(x+a)^{2}={x'}^{2}\leq\frac{1}{4}$, and using \eqref{eq:fbound}.
Whereas the higher end of the interval was handled above in Case \ref{case:h:smallcprime}
since $f$ is continuous. 
\item Finally, when $\frac{1}{2}\leq x'$, $c\geq(x+a)^{2}\geq\frac{1}{4}$,
so 
\[
h(x,c,a,a)=a+(1-x-a)^{2}+2a(1-x-a)+x-3x(x+a)\leq x^{2}-2x+1-(x+a)^{2}+(x+a).
\]
 If $x\leq\frac{1}{2}$, then 
\[
h(x,c,a,a)\leq x^{2}-2x+1-(x+a)^{2}+(x+a)\leq x^{2}-2x+1.25=f(x,c).
\]
 If $\frac{1}{2}\leq x$, then $-(x+a)^{2}+(x+a)$ is monotonously
decreasing in $a$, so 
\[
h(x,c,a,a)\leq x^{2}-2x+1-(x+a)^{2}+(x+a)\leq x^{2}-2x+1-x^{2}+x=1-x=f(x,c).
\]
\end{enumerate}
\item Now we consider $a\geq\frac{c}{x+a}-x$, that is, $a\geq\sqrt{c}-x$.
Then $\tilde{a}=\min\!\left(a,\frac{c}{x+a}-x\right)=\frac{c}{x+a}-x\leq\sqrt{c}-x$,
and $\frac{c-(x+\tilde{a})(x+a)}{1-x-a}=0$, so $f\!\left(x+a,\frac{c-(x+\tilde{a})(x+a)}{1-x-a}\right)=1-x-a$.
\begin{align*}
h(x,c,a,\tilde{a}) & =a+(1-x-a)^{2}+2\left(\frac{c}{x+a}-x\right)(1-x-a)+x-3x(x+a)\\
 & \leq a+(1-x-a)^{2}+2\left(\sqrt{c}-x\right)(1-x-a)+x-3x(x+a),
\end{align*}
 which is quadratic in $a$ with a positive leading coefficient, so
its maximum is at one end of the interval $\left[\sqrt{c}-x,1-x\right]$.
$a=\sqrt{c}-x$ (i.e.,\ $a=\frac{c}{x+a}-x=\tilde{a}$) was handled
above in Case \ref{case:h:a=00003Datilde} If $a=1-x$, the right
side of the inequality equals $1-3x$ which is $\leq f(x,c)$ by Point
\ref{enu:1-x}.\qedhere
\end{enumerate}
\end{boldproof}

\end{document}